\documentclass[11pt]{amsart}

\usepackage{amsmath,amssymb,amsthm}
\usepackage[margin=1.2in]{geometry}
\usepackage{graphicx}
\usepackage[colorlinks=true, linkcolor=blue, citecolor=blue]{hyperref}

\newtheorem{theorem}{Theorem}[section]
\newtheorem{proposition}[theorem]{Proposition}
\newtheorem{lemma}[theorem]{Lemma}
\newtheorem{corollary}[theorem]{Corollary}
\theoremstyle{definition}
\newtheorem{definition}[theorem]{Definition}
\newtheorem{assumption}[theorem]{Assumption}
\newtheorem{remark}[theorem]{Remark}

\newtheorem{thmx}{Theorem}

\newcommand{\Ld}{\mathcal{L}^d}
\newcommand{\LL}[1]{\mathcal{L}^{#1}}
\newcommand{\Hd}{\mathcal{H}^{d-1}}
\newcommand{\tr}{\operatorname{tr}}
\newcommand{\dive}{\nabla\!\cdot}
\newcommand{\R}{\mathbb{R}}
\newcommand{\T}{\mathbb{T}}
\newcommand{\PP}{\mathcal{P}}
\newcommand{\dd}{\,d}

\title[No-flux continuity equations and confined Lagrangian flows]{REFLECTED DIFFUSIONS,\\
NO-FLUX CONTINUITY EQUATIONS AND\\
CONFINED LAGRANGIAN FLOWS IN BOUNDED DOMAINS}

\author{Rama CONT}
\address{Mathematical Institute, University of Oxford}

\begin{document}
\maketitle
\begin{abstract}
Motivated by marginal distribution flows of reflected diffusions in bounded domains, we investigate when a density–flux pair solving a no-flux continuity equation admits a regular Lagrangian flow that remains in the closed domain and generates the prescribed density flow. Our first result gives sufficient conditions in terms of interior bounded-variation regularity, bounded-variation control on a boundary collar, a one-sided bound on an absolutely continuous divergence, and vanishing normal trace of the velocity. The proof uses the fact that tangency removes the singular boundary contribution to the divergence of the zero extension, thereby making the extended velocity admissible for the Ambrosio–DiPerna–Lions theory. We also establish two uniqueness results for no-flux Fokker–Planck equations: a duality result for bounded measurable drifts and a weighted-energy result for entrance-type drifts singular at the boundary.

Our second result shows that these boundary assumptions cannot be jointly relaxed so as to admit a boundary-current mechanism. We construct an explicit smooth density–flux pair carrying a boundary current—a tangential mass current of nonvanishing linear density along a wall where the volume density vanishes. Its density evolution is unique in a weighted class, and its characteristics are unique, confined, and transport the marginals, yet it admits no regular Lagrangian flow because the compressibility bound fails arbitrarily close to the initial time. Rigidity results delimit such failures and show that the relevant boundary hypotheses are structurally entangled. As an application, we provide precise regularity assumptions under which the reflection-free probability-flow ODE describes the flow of marginals of  reflected diffusion models,  after early stopping.  Our results 
provide a rigorous mathematical justification for using the  ODE-based sampling of reflected diffusion models under minimal regularity assumptions on the coefficients, and also indicate when such ODE-based samplers may fail.
\end{abstract}
\vskip 1cm
{\bf Keywords}: Score-based diffusion models; Reflected diffusion processes; \\
Fokker–Planck equations; confined Lagrangian flows; generative models\\
DiPerna–Lions theory;  Continuity equation;  Linear transport equations.
\vskip 1cm
\noindent
{\bf Mathematics Subject Classification}: 34A12 35D30 35Q84 35Q49 60H10 \\
49J52
 35J60 35Kxx  28A25

\newpage
\tableofcontents
\newpage
%=====================================================================
\section{Generative models, reflected diffusions and density flows in domains}\label{sec:intro}
%=====================================================================

\subsection{Generative modelling with reflected diffusions}\label{sec:generative}

{\it Reflected diffusion models} for constrained generative modelling were introduced  by Lou and Ermon \cite{LouErmon} and Fishman et al. \cite{fishman2024} to address the problem that standard
score-based diffusion models \cite{Song2021} can generate unnatural samples outside the support of the target data distribution. In this setting, the forward process is constrained to remain in a bounded domain $\Omega\subset \mathbb{R}^d$ representing the data constraints: it evolves as a {\it reflected diffusion process} in  $\Omega$ , starting from the data distribution $\mu_0$, supported on $\overline{\Omega}$:
\begin{equation}
dX_t =
f(X_t,t)\,dt + \sigma(t)\,dW_t - \nu(X_t)\,dL_t,
\qquad X_t\in\overline{\Omega},\qquad X_0\sim \mu_0\in {\mathcal P}(\overline{\Omega})\label{eq.RSDE}    
\end{equation}
where $\sigma:[0,T]\to\mathbb{R}_+ $ is a scalar function, $L_t$ is the  local time of $X$ at the boundary $\partial\Omega$ of the domain, which reflects the process into the interior whenever it reaches $\partial\Omega$, and $\nu(x)$ denotes the outward unit normal vector to the boundary at $x\in \partial\Omega$. This forward diffusion transports the data distribution $\mu_0$ (which may have a singular support) towards a (tractable) target distribution $\mu_T\in {\mathcal P}(\overline{\Omega})$ with density $p_T$.

Sampling is then done by {\it reversing} the density flow $(p_t)_{t\in[0,T]}$ of \eqref{eq.RSDE}. This can be done by simulating the time-reversed  reflected SDE \cite{Cattiaux88,petit1997}
\begin{equation}
\mathrm{d}Y_t = -\big(f(Y_t,T-t) - \sigma(T-t)^2 \nabla_x \log p_{T-t}(Y_t)\big)\,\mathrm{d}t + \sigma(T-t)\,\mathrm{d}\overline{B}_t -\nu(Y_t)\mathrm{d}\overline{L}_t,\quad
Y_t\in\overline{\Omega},\quad Y_0\sim p_T
\nonumber
\end{equation}
involving the {\it score function} $\nabla_x \log p_t$ of $X$, which is learned via   score matching \cite{LouErmon,fishman2024,Song2021,Song2021mle}. Then the marginal density   $Y_t$ is $q_t=p_{T-t}$ so $Y_T\sim \mu_0$ gives a sample from the data distribution.

Alternatively, as noted by Lou \& Ermon \cite{LouErmon}, the {\it same} probability flow $(p_t)_{t\in [0,T]}$ may be obtained by simulating a {\it deterministic}  probability flow ODE (PF-ODE)
\begin{equation}
\dot Z_t=f(Z_t,t)- \frac{\sigma(t)^2}{2}
\nabla \log p_t(Z_t),\qquad Z_0\sim \mu_0    \label{eq.PFODE} 
\end{equation}
driven by the velocity field $v(x,t)=f(x,t)-\tfrac 12\sigma^2(t)\nabla \log p_t(x)$. Simulating this ODE backwards in time starting from $Z_T\sim p_T$ then yields the backward flow $p_{T-t}$,
which enables fast deterministic sampling  of $\mu_0$ and exact likelihood evaluation. 
These methods yield state-of-the-art  generative models, 
achieving competitive performance on benchmark data sets such as CIFAR-10 and ImageNet \cite{LouErmon}. At the same time, their theoretical foundations remain to be fully explored \cite{chen2024}.

The claim \cite{LouErmon} that the marginal flows of the ODE \eqref{eq.PFODE} and the reflected SDE \eqref{eq.RSDE} have the same density flow $(p_t)_{t\in[0,T]}$ appears somewhat surprising at first glance.  Interestingly, as noted by 
Lou and Ermon \cite{LouErmon}, the  probability flow ODE \eqref{eq.PFODE} is the {\it same} as in the unconstrained case where $\Omega=\mathbb{R}^d$ \cite{pfode,Song2021} and, unlike the SDE \eqref{eq.RSDE}, does not contain any reflection term.  The density flow associated with the reflected SDE \eqref{eq.RSDE} satisfies a parabolic Fokker-Planck equation in a bounded domain with a no-flux boundary condition, while the flow associated with the ODE \eqref{eq.PFODE} is a {\it first-order}   transport equation. 
In fact,  if $p_t>0$ is regular enough, using the identity $\Delta p_t=\nabla\cdot{}(p_t\nabla\log p_t)$ the continuity equation for  the vector field $v(x,t)=f(x,t)-\tfrac 12\sigma^2(t)\nabla \log p_t(x)$
$$ \partial_t p + \dive(p\ v)=\partial_t p + \dive(p\ f)-\frac{\sigma(t)^2}{2}\Delta p_t$$
formally coincides with the Fokker-Planck equation for \eqref{eq.RSDE}! Hence, if uniqueness of solutions for both equations is satisfied, one   therefore expects the reflected diffusion and the PF-ODE to possess the same flow of marginal distributions $(p_t)_{t\in[0,T)}$ and \eqref{eq.PFODE} may indeed be used to sample from $p_t$ for $t>0$.

Needless to say, the above assertions assume  many  regularity, existence, uniqueness, and positivity properties which deserve to be qualified.
Our goal in this study is to give mathematically precise conditions for the above assertions to hold in a general setting. 

Note in particular that the ODE \eqref{eq.PFODE} itself carries no  boundary condition. Rather, the associated flux $J(x,t)=p_t(x)v(x,t)$ satisfies
$p_t(x)v(x,t)\cdot{}\nu(x)=0$ on the boundary.
One of the mathematical questions we address  is whether this 'Eulerian' condition actually produces a  regular Lagrangian flow confined to $\overline{\Omega}$. We will provide conditions for this to hold.
Alternatively, we will investigate how deterministic transport can fail when the density degenerates at the boundary.

\subsection{Probability flows and regular Lagrangian flows in bounded domains}\ \\

Motivated by these considerations, we investigate the following question:\\
\begin{center}
When does a probability flow over a bounded domain \\admit a deterministic transport representation?    
\end{center}
More precisely, given a probability flow $(\mu_t)_{t\in[0,T]}$ on $\overline{\Omega}$ with possibly singular initial law $\mu_0$,
 we seek to construct a vector field $v$ satisfying 'minimal' regularity  conditions, which
generates a unique regular Lagrangian flow  satisfying a no-flux condition at the boundary and transports $\mu_0$ along the prescribed flow.

We address these questions in a  general setting, for general density--flux pairs solving a no-flux continuity equation. We identify a BV normal-trace criterion that yields a confined regular Lagrangian flow through the Ambrosio--DiPerna--Lions theory, apply it to early-stopped reflected diffusions, and construct a boundary-current example in which invariant characteristics and exact marginal transport persist while regular-Lagrangian compressibility fails.

Let $\Omega \subset \R^d$ be a bounded Lipschitz domain with outer
unit normal $\nu$. The object of study is a
density--flux pair $(p, J)$ on $\bar\Omega\times[0,T]$, say $p$
and $J$ continuous on $\bar\Omega$ and continuously differentiable
in $\Omega$, satisfying the continuity equation together with a
\emph{no-flux} boundary condition:
\begin{equation}\label{eq:classical}
\left\{
\begin{aligned}
&\partial_t p(x,t) + \dive J(x,t) = 0,
&& (x,t)\in\Omega\times(0,T),\\[2pt]
&J(x,t)\cdot\nu(x) = 0,
&& (x,t)\in\partial\Omega\times(0,T),\\[2pt]
&p(\cdot,0) = p_0
&& \text{in } \Omega .
\end{aligned}
\right.
\end{equation}
The boundary condition $J(x,t)\cdot\nu(x) = 0$ means that no mass flows across the boundary
$\partial\Omega$: formally,
$$\tfrac{d}{dt}\int_\Omega p_t
= -\int_{\partial\Omega} J\cdot\nu\;d\Hd = 0$$
so total mass in $\Omega$ is conserved and the evolution is confined to
$\bar\Omega$. When the flux has the Fokker--Planck form
$J = fp - \tfrac12\sigma^2\nabla p$, the second line of
\eqref{eq:classical} is the co-normal condition
$\big(\tfrac12\sigma^2\nabla p - fp\big)\cdot\nu = 0$, a Robin-type
relation $\partial_\nu p = (2f\cdot\nu/\sigma^2)\,p$ which reduces
to the homogeneous Neumann condition only where $f\cdot\nu = 0$.
The classical formulation \eqref{eq:classical}, however,
presupposes that $J$ has a well-defined normal trace on
$\partial\Omega$, which fails for the irregular velocity fields we would like to consider. We therefore consider weak solutions of \eqref{eq:classical} 
by testing against functions unrestricted at the boundary: a density--flux pair $(p, J)$  is a weak solution of the continuity equation \eqref{eq:classical} in $\Omega$ with \emph{no-flux} boundary conditions if 
\begin{equation}\label{eq:NFintro}\forall \varphi \in C^\infty(\bar\Omega\times[0,T)),\qquad 
\int_0^T\!\!\int_\Omega \big(p\,\partial_t\varphi +
J\cdot\nabla\varphi\big)\dd x\dd t
+ \int_\Omega p_0\,\varphi(\cdot,0)\dd x = 0.
\end{equation}
This formulation contains simultaneously the interior equation
$\partial_t p + \dive J = 0$ and the vanishing of the normal flux
$J\cdot\nu$ on $\partial\Omega$, with no boundary term discarded.

%\subsection{Probability  transport on a

Two distinct problems are attached to such a pair, in the
terminology of continuum mechanics: the \emph{Eulerian} description
records the evolution of fields at fixed points of the domain, while the
\emph{Lagrangian} description follows individual trajectories, and
for smooth velocities each determines the other. 

The
\emph{Eulerian problem} asks whether the density evolution $(p_t)$
is uniquely determined in a natural class; the \emph{Lagrangian
problem} asks whether the velocity field
\[
v := J/p
\]
generates a deterministic flow \emph{confined to} $\bar\Omega$ 
transporting $p_0\Ld$ onto $p_t\Ld$ --a regular Lagrangian flow
(RLF) in the sense of DiPerna--Lions \cite{DiPernaLions}. 

In the whole space, the
relation between these two descriptions was analysed in \cite{pfode}
for the probability flows arising in score-based diffusion models:
the density flow is unique under weak assumptions on the drift
\cite[Theorem~3.1]{pfode}, the probability-flow ODE is well posed
under Sobolev or BV regularity of the score together with one-sided
divergence bounds \cite[Theorem~4.5]{pfode}, and a counterexample was used to
illustrate this gap. On a bounded domain two new
phenomena enter:\begin{itemize}
    \item the behaviour of $v$ on the boundary $\partial\Omega$, and 
    \item the
question of \emph{invariance} --- the ODE carries no boundary
condition, so nothing in its formulation prevents trajectories from
leaving $\bar\Omega$. Not surprisingly, the zero-extension of the velocity field $v$ will play a role regarding this point.
\end{itemize} 

\subsection{Main results}

Our two main results identify, respectively, a sufficient boundary
criterion for the transport theorem (in fact the exact criterion
eliminating the singular boundary term in the zero-extension
approach) and the failure mode when
the boundary hypotheses are relaxed. Both revolve around a single identity.
For $v \in L^1_t BV$ near $\partial\Omega$, the extension of $v$ by
zero, $v^0 := v\,\mathbf{1}_\Omega$, is a BV field on $\R^d$ whose
distributional divergence is
\begin{equation}\label{eq:sheet}
\boxed{\ \ D\!\cdot v^0 \;=\; (\dive v)\,\Ld\llcorner\Omega
\;-\;\big(\tr v\cdot\nu\big)\,\Hd\llcorner\partial\Omega . \ \ }
\end{equation}
There is a discontinuity  at the boundary and the gradient has a singular bounded-variation (BV) part. 
Ambrosio's theory \cite{Ambrosio04,Ambrosio08} tolerates bounded-variation singularities, but the
\emph{divergence} acquires a singular sheet carried by
$\partial\Omega$ unless the normal trace of $v$ vanishes.
\emph{Tangency of $v$ is exactly the condition removing the singular
boundary part from} \eqref{eq:sheet}, and hence exactly the condition
making $v^0$ admissible for the well-posedness conditions of Ambrosio
\cite{Ambrosio04}. Our first main result converts this observation
into a transport theorem.
\begin{thmx}[Confined regular Lagrangian flow; Section
\ref{sec:mainA}]\label{thm:A}
Let $(p,J)$ be a no-flux weak solution on a bounded Lipschitz domain
satisfying the flux--density hypotheses \textup{(F0)--(F4)} of
Section \ref{sec:mainA}: positivity of $p$ in $\Omega$; $v = J/p \in
L^1\big([0,T]; BV_{\mathrm{loc}}(\Omega)\big)$ together with $v \in
L^1\big([0,T]; BV(\Omega\cap U)\big)$ for a neighbourhood $U$ of
$\partial\Omega$; absolutely continuous divergence with $[\dive v]^-
\in L^1_tL^\infty(\Omega)$; and tangency, $\tr v\cdot\nu = 0$
$\Hd$-a.e.\ on $\partial\Omega$. Then $v$ admits a regular Lagrangian
flow $Z$ on $\bar\Omega$, unique up to $\Ld$-null sets,  with
compressibility constant
$\exp\int_0^T\|[\dive v]^-\|_{L^\infty(\Omega)}$, and transporting $p_0$ to $p_t$:
\[
Z(t,\cdot)_\#\big(p_0\,\Ld\llcorner\Omega\big)
= p_t\,\Ld\llcorner\Omega \qquad \text{for every } t\in[0,T].
\]
\end{thmx}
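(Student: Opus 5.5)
The approach is to reduce Theorem~\ref{thm:A} to Ambrosio's well-posedness theorem for regular Lagrangian flows on $\R^d$, applied to the zero extension $v^0 := v\,\mathbf{1}_\Omega$.

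\textbf{Step 1: $v^0$ is admissible for Ambrosio's theory.} First I show $v^0\in L^1([0,T];BV(\R^d))$. Cover $\bar\Omega$ by the collar $U$ and a compact set $K\Subset\Omega$. On $K$, hypothesis (F1) gives $BV$ control. On $\Omega\cap U$, the collar hypothesis together with the Lipschitz trace theorem for $BV(\Omega\cap U)$ shows that the zero extension is $BV$. Its jump part is $(\tr v)\otimes\nu\,\Hd\llcorner\partial\Omega$.

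Taking the trace of $Dv^0$ gives the identity \eqref{eq:sheet}, with the singular boundary sheet $-(\tr v\cdot\nu)\,\Hd\llcorner\partial\Omega$. Tangency kills this sheet, so
\[
D\cdot v^0=(\dive v)\,\Ld\llcorner\Omega .
\]
This is absolutely continuous, with negative part bounded by $\|[\dive v]^-\|_{L^\infty(\Omega)}\in L^1_t$. Boundedness of $v^0$ in the sense required by Ambrosio (e.g.\ $|v^0|/(1+|x|)\in L^1_tL^1+L^1_tL^\infty$) is automatic, since $v^0$ is compactly supported and $BV\subset L^1$.

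\textbf{Step 2: flow, compressibility and confinement.} Ambrosio's theorem \cite{Ambrosio04} yields a unique regular Lagrangian flow $Z$ of $v^0$ on $\R^d$, with compressibility constant $\exp\int_0^T\|[D\cdot v^0]^-\|_\infty$, which equals the stated constant.

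For invariance, note that $v^0=0$ outside $\bar\Omega$. Hence points of $\R^d\setminus\bar\Omega$ are stationary: $Z(t,x)=x$ is a solution there, and by a.e.\ uniqueness (a restriction/locality argument) it is the flow on that set. Flows are measure-class preserving and a.e.\ injective, because forward and backward flows compose to the identity. So $Z_t$ maps a.e.\ point of $\Omega$ into $\bar\Omega$; otherwise a positive-measure set of $\Omega$ would land on $\R^d\setminus\bar\Omega$, which is already filled by stationary points. Here I use that $\partial\Omega$ is $\Ld$-null for Lipschitz domains. I expect the rigorous version of this injectivity step to require the Lagrangian semigroup property.

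\textbf{Step 3: transport of the marginals.} Set $\mu_t:=Z(t,\cdot)_\#(p_0\Ld\llcorner\Omega)$. This is a solution of the continuity equation on $\R^d$ with velocity $v^0$, with density bounded by compressibility times $\|p_0\|_\infty$ if $p_0\in L^\infty$. The given $p_t\,\Ld\llcorner\Omega$, extended by zero, also solves $\partial_t\rho+D\cdot(\rho v^0)=0$ on $\R^d$. Indeed, the no-flux formulation \eqref{eq:NFintro} with test functions unrestricted at $\partial\Omega$ is precisely the statement that the zero extension of $J=pv$ solves the equation distributionally on $\R^d$.

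Both are nonnegative measure solutions. I then invoke uniqueness in the appropriate class, and \textbf{this is the main obstacle}. The Ambrosio--DiPerna--Lions uniqueness holds in $L^\infty_tL^1\cap L^\infty_tL^\infty$, but $p_t$ need not be bounded. I would instead use Ambrosio's superposition principle: any nonnegative measure-valued solution with $\int_0^T\!\int|v^0|\,d\rho_t\,dt<\infty$ (which follows from $J\in L^1$) is a superposition of integral curves of $v^0$. Uniqueness of the RLF then holds in the stronger sense that $\Ld$-a.e.\ integral curve from $\rho_0$-a.e.\ point coincides with $Z$. Since $\rho_0=p_0\Ld\ll\Ld$, the superposition measure is concentrated on $Z(\cdot,x)$, which gives $\rho_t=\mu_t$. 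If that route needs truncation, I would first prove the identity for $p_0\wedge k$ via linearity and stability, then pass to the limit by monotone convergence.
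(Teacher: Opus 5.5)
Your architecture matches the paper's: zero extension, Lemma \ref{lem:sheet} plus tangency to verify (R1)--(R3), Theorem \ref{thm:ambrosio}, then invariance and transport. However, two of your steps rest on justifications that fail under the stated hypotheses.

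\textbf{Invariance.} You derive confinement from a.e.\ injectivity of $Z(t,\cdot)$, justified by composing forward and backward flows. A backward regular Lagrangian flow for $v$ requires $[\dive v]^+\in L^1_tL^\infty$, which is not assumed. Hypothesis (F3) is one-sided, and the paper obtains essential invertibility only under that extra hypothesis (Remark \ref{rem:timerev}).

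Injectivity is not needed. Take the representative $v^0=v\mathbf{1}_\Omega$, which vanishes identically on the open set $\R^d\setminus\bar\Omega$, and any integral curve $\gamma$ of $v^0$ with $\gamma(0)\in\Omega$. On a component $(a,b)$ of the open set $\{t:\gamma(t)\notin\bar\Omega\}$ the curve is constant, so by continuity $\gamma(a)\notin\bar\Omega$. If $a>0$, then $a$ lies in that open set, contradicting that it is an endpoint of a component. If $a=0$, this contradicts $\gamma(0)\in\Omega$. This is the paper's Step 2.

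You should also record two points. First, by compressibility and $\Ld(\partial\Omega)=0$, a.e.\ trajectory spends null time on $\partial\Omega$, so the integral identity holds with $v$ in place of $v^0$. Second, uniqueness on $\bar\Omega$ follows by extending any competitor by the identity outside $\bar\Omega$.

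\textbf{Transport.} The obstacle you flag does not exist. Hypothesis (F0) refers to Definition \ref{def:nfweak}, which requires $p\in L^\infty([0,T];L^1\cap L^\infty(\Omega))$. Hence $p_t\mathbf{1}_\Omega$ and $Z(t,\cdot)_\#(p_0\Ld\llcorner\Omega)$ are both bounded distributional solutions for $v^0$ with the same datum. Theorem \ref{thm:ambrosio}(1) identifies them for a.e.\ $t$, and narrow continuity of both curves gives every $t$; this is the paper's Step 3.

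Your fallback would not have worked. Ambrosio's Lagrangian uniqueness applies only to superpositions whose time-marginals have bounded densities. Without that condition, the claim that a.e.\ integral curve from a.e.\ initial point coincides with $Z$ is not part of the theory. It is in fact false for some Sobolev (hence BV) divergence-free fields, as shown by Bru\'e--Colombo--De Lellis, building on Modena--Sattig--Sz\'ekelyhidi. Truncating $p_0$ does not repair this. Restricting the superposition to curves issuing from $\{p_0\le k\}$ yields marginals dominated by $p_t$, not by $k$, so boundedness of $p_t$ is exactly what is needed.
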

Given \eqref{eq:sheet}, the proof is short: invariance of
$\bar\Omega$ is elementary (trajectories of $v^0$ that exited would
be frozen outside, contradicting continuity at the exit time), and
the transport identity is read off from \eqref{eq:NFintro}, which is
precisely the statement that the zero extension of $p$ solves the
global continuity equation. Tangency is automatic whenever the
density is continuous and positive up to the boundary (Corollary
\ref{cor:auto}), which covers the marginal flows of reflected
diffusions after early stopping.

Our second   result shows that the boundary assumptions are not merely an artefact of the  extension method: they cannot be removed so as to admit the boundary-current mechanism.
The construction
exhibits a vector field carrying a wall-localised mechanism of the same
nature as the singular sheet in \eqref{eq:sheet}; the precise
relation is an exhaustion statement, given in Section
\ref{sec:mainB}. The mechanism is an infinite compression at the
wall, and it destroys the regular Lagrangian flow even when
everything else --- Eulerian uniqueness, invariant characteristics,
pointwise transport --- survives.

\begin{thmx}[The boundary current; Section \ref{sec:mainB}]
\label{thm:B}
There is an explicit pair $(p,J) \in
C^\infty(\bar\Omega\times[0,T])$ on the periodic strip $\Omega =
\T\times(0,1)$ --- given in \eqref{eq:bce} below --- with $p > 0$ in
$\Omega$, vanishing linearly at the walls, solving the no-flux
continuity equation, such that:
\begin{itemize}
\item[(i)] $(p_t)$ is the unique no-flux weak solution, with initial condition
$p_0$, of an associated Fokker--Planck equation with uniformly
elliptic diffusion, in the relative energy class $\mathcal{C}_p$ of
Section  \ref{app:weighted};
\item[(ii)] the normal component $v\cdot\nu$ extends continuously to
$\bar\Omega$ with boundary values that are nonzero almost
everywhere, and outward on a moving half of each wall: tangency
fails in the classical sense. The tangential component blows up like
the inverse distance to the wall, so that $v\notin
L^1(\text{collar})$ and hypotheses \textup{(F2)--(F3)} fail there as
well;
\item[(iii)] through a.e.\ point of $\Omega$ there is a unique
integral curve of $v$; it is global, confined to $\Omega$, and the
flow so defined transports $p_0\LL{2}$ onto $p_t\LL{2}$ for every
$t$;
\item[(iv)] and yet, for every $T > 0$, $v$ admits \emph{no} regular
Lagrangian flow on $[0,T]$: for any map satisfying the trajectory
condition and any constant $L$, there are a time $t_*\in(0,T]$ and a
set of positive Lebesgue measure on which the push-forward
$Z(t_*,\cdot)_\#\LL{2}$ has density exceeding $L$ --- the density
blowing up like the inverse distance to the wall --- so that no
single compressibility constant can hold at all times.
\end{itemize}
\end{thmx}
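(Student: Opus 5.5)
The plan is to build the pair explicitly and verify (i)–(iv) by direct computation; no abstract argument is needed beyond the weighted uniqueness theorem of Section \ref{app:weighted}. On $\Omega=\T\times(0,1)$ with coordinates $(x,y)$, I would choose a density vanishing linearly at both walls, for instance
\[
p(x,y,t)=c\,y(1-y)\bigl(1+\varepsilon\cos(2\pi(x-t))\bigr),
\]
or a slight variant of it. I would then add to the flux a tangential \emph{boundary current} $J_1=a(x,t)\,g(y)$ with $g(0),g(1)\neq 0$, so that the linear density of mass current along each wall is nonzero. The normal component $J_2$ is fixed by the continuity equation, $\partial_y J_2=-\partial_t p-\partial_x J_1$ with $J_2(\cdot,0)=0$. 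To get $J_2(\cdot,1)=0$ as well, I would pick $a$ and $g$ so that the $y$-integral of $\partial_t p+\partial_x J_1$ vanishes for every $x$. A convenient way is to take $J_1$ as a travelling wave with the same phase as $p$, so that $\partial_t p+\partial_x J_1$ is $\partial_x$ of a function with zero $y$-mean. Everything is then smooth on $\bar\Omega\times[0,T]$, and the no-flux identity \eqref{eq:NFintro} follows by integrating by parts.

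For (ii) I would compute $v=J/p$ directly. Near $y=0$ we have $v_1=J_1/p\sim a\,g(0)/(c\,y)$, which blows up like the inverse distance, so $v\notin L^1$ of the collar and (F2)–(F3) fail. Since $J_2=O(y)$ and $p\sim cy$, the ratio $v_2=J_2/p$ extends continuously to the wall with value $\partial_yJ_2(x,0)/\partial_y p(x,0)=-(\partial_tp_y+\partial_xJ_{1})/p_y$ evaluated at $y=0$. This is a travelling trigonometric profile, nonzero a.e. and outward ($v_2<0$ at $y=0$) on a moving half-period. For (i) I would write $J=fp-\tfrac12\sigma^2\nabla p$ with $\sigma$ constant and $f:=v+\tfrac12\sigma^2\nabla\log p$. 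I then need to check that $p$ and $f$ satisfy the entrance-type hypotheses of the weighted-energy uniqueness result (finite relative energy of $p_t$, the appropriate singular behaviour of $f$ at the walls) and invoke that result.

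For (iii) the key observation is that characteristics stay inside $\Omega$ even though $v_2$ points outward on part of the wall. Near the wall, $\dot y=v_2=O(1)$ while $\dot x\sim C/y$ is huge, so the particle sweeps rapidly in $x$. The outward push should therefore average out over a period of $\cos(2\pi(x-t))$ and leave a net $\dot y$ of order $y$ or better, which prevents the wall from being reached in finite time. I would make this rigorous with the substitution $s$, $ds=dt/y$: the averaged equation becomes $dy/ds=O(y^2)$ or $O(y)$, so the orbit never hits $y=0$. Away from the walls $v$ is smooth, which gives uniqueness through every interior point. Transport of $p_0\LL2$ then follows from the smooth-flow Liouville identity on compact subsets, since $\partial_tp+\nabla\cdot(pv)=0$ classically, together with confinement and mass conservation.

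For (iv) I would use $\nabla\cdot v=-v\cdot\nabla\log p$. Near the wall this contains $-v_2/y$, which equals $+|v_2(x,0)|/y$ on the inward half and $-|v_2(x,0)|/y$ on the outward half, so $[\nabla\cdot v]^-\sim 1/y$ is unbounded. Now take any map $Z$ satisfying the trajectory condition. It coincides a.e. with the flow of (iii), because a.e. starting point is interior and has a unique integral curve. The push-forward of Lebesgue measure has density $\exp\bigl(-\int_0^t\nabla\cdot v(Z_s,s)\,ds\bigr)$ along trajectories. Take a set of starting points at height $y\sim\delta$ whose phase keeps them in the compressive region over a short interval $[0,t_*]$ with $t_*\sim\delta$, which is possible because they move fast in $x$ but the phase pattern moves too. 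On that set the compression gives density $\gtrsim e^{c\,t_*/\delta}$, or at least of order $1/\delta$, exceeding $L$ once $\delta$ is small and $t_*\le T$. I expect this step to be the main obstacle: one must show that the fast tangential sweep does not average the compression away before it accumulates. This is exactly where the phase coupling between $J_1$ and $p$ in the construction has to be tuned. If the averaging kills the effect, a fallback is to compare $Z(t,\cdot)_\#\LL2$ with $p_t/p_0$ transported along the flow, which is known from (iii), and extract the $1/\mathrm{dist}$ blow-up from the ratio $p_0(\text{start})/p_t(\text{end})$ for trajectories drifting toward the wall.
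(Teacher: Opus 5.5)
\textbf{There is a genuine gap in (iii)--(iv).} Your treatment of (i)--(ii) is fine. At the wall $\partial_tp=0$, so the trace is $v_2(x,0)=-\partial_xJ_1(x,0)/\partial_yp(x,0)$.

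The gap starts in the construction, which leaves open exactly the choice that decides (iv). In the co-moving frame $\xi=x-t$, any travelling-wave choice makes $J-p\,e_1$ stationary and divergence-free. Hence $J-p\,e_1=\nabla^\perp\Psi$ with $\Psi=0$ on the bottom wall, and $\Psi$ is an \emph{exact} first integral of the frame ODE. Near $y=0$ one has $\Psi\approx g(\xi)\,y$, where $g=J_1(\cdot,0)$ is the wall current, and $p\approx c(\xi)\,y$. Moreover $\dive v=-(\partial_t+v\cdot\nabla)\log p$ (your formula drops $\partial_t\log p$). So the push-forward of $\LL{2}$ has density exactly $p_t(Z_t)/p_0(Z_0)$ along each trajectory.

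Suppose $g$ is bounded away from zero, as with the same-phase choice $a\propto1+\varepsilon\cos(\cdot)$ that your ansatz suggests. Then the near-wall orbits are closed graphs $y\approx\kappa/g(\xi)$ running along the whole wall, $p$ varies along each by a bounded factor, and the compression in (iv) never occurs. Concretely, take the paper's $p$ with $\psi=\frac1\pi(2+\cos\xi)\sin(\pi x_2)$. This gives $v=\bigl(1+8\cot(\pi x_2),\,\frac{8\sin\xi}{\pi(2+\cos\xi)}\bigr)$. This field is divergence-free, non-tangential a.e., and not integrable on any collar. Yet it has a confined, measure-preserving flow, i.e.\ a regular Lagrangian flow with $L=1$.

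What (iv) requires is a wall current that \emph{changes sign}. It must converge at stagnation points where $v_2=-\partial_\xi g/c>0$, so that near-wall mass is ejected into the interior. This is the role of $\cos\xi$ in the paper's $\psi=\frac{\lambda}{\pi}\sin(\pi x_2)\cos\xi$.

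Your averaging argument for (iii) fails exactly there: near zeros of $g$ the sweep is not fast, and orbits do not traverse a period. Even where averaging applies, it gives approximate dynamics over finite times, not a proof that the wall is never reached. The paper uses the first integral $\bar\psi=\sin(\pi x_2)\cos\xi$ instead. Its levels $\kappa\neq0$ stay at distance $\ge\frac1\pi\arcsin|\kappa|$ from the walls, which gives global existence, uniqueness and confinement. This holds only a.e.: the null level contains the lines $\cos\xi=0$, along which trajectories do reach the wall. Transport then follows from $\dive(p\,v^{\mathrm{fr}})=\dive\nabla^\perp\psi=0$.

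\textbf{In (iv), both the sign and the time scale of your plan are wrong.} Near the bottom wall, $-v_2\,\partial_y\log p\approx-v_2/y$ is compressive where $v_2>0$, i.e.\ on the \emph{inward} half. A push-forward density exceeding $L$ means $p_t(Z_t)/p_0(Z_0)$ is large, i.e.\ trajectories \emph{leaving} the wall. Your fallback ratio $p_0(\mathrm{start})/p_t(\mathrm{end})$ along trajectories approaching the wall measures expansion, which Definition \ref{def:rlf}(ii) does not constrain.

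There is also no $e^{ct_*/\delta}$ effect. By the exact identity above, the density is at most $\sup p/p_0(Z_0)\lesssim1/\delta$. With $t_*\sim\delta$ and $|v_2|$ bounded, the height changes only by $O(\delta)$, so the ratio stays bounded.

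What you need is $t_*$ independent of $\delta$ together with a quantitative climb; this is the paper's transit lemma.
\begin{itemize}
\item Start in $\{\xi\in(\frac\pi6,\frac\pi3),\ \kappa<\bar\psi<2\kappa,\ x_2<\frac12\}$, where $p\le\frac32\kappa$.
\item Along the orbit $\xi$ increases but stays below $\frac\pi2$, so $\sin\xi\ge\frac12$ and $c_\lambda\le\dot x_2\le C_\lambda$ up to $t_*=\min\{T,\bar t\,\}$.
\item Hence $x_2(t_*)\ge c_\lambda t_*$ and $p(Z(t_*))\ge c(t_*)$.
\item So the push-forward density exceeds $c(t_*)/(\frac32\kappa)$ on the open set $Z(t_*,B_\kappa)$, which is unbounded as $\kappa\downarrow0$.
\end{itemize}
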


The mechanism is a \emph{boundary current}: a tangential mass current
of nonvanishing linear density along a wall where the volume density
vanishes. The normal flux $J\cdot\nu$ vanishes because the density
does --- not because the velocity is tangential --- and near-wall
trajectories, swept along the current, are compressed against
mid-domain ones without bound. Theorem \ref{thm:B} separates three
layers that the whole-space theory tends to fuse: existence of
invariant characteristics, pointwise deterministic transport of the
marginals, and the regular Lagrangian flow property.

\subsection{Sharpness and rigidity}

To investigate the sharpness of these results, we study some  (counter)examples. Theorem \ref{thm:B} shows that the
collection of boundary hypotheses \textup{(F2)--(F4)} cannot be
relaxed so as to admit the boundary-current mechanism, the three
being structurally entangled in the smooth linear-vacuum regime, so
that no example there can isolate \textup{(F4)}; see Remark
\ref{rem:whatitshows}. We also show in Section
\ref{sec:reflected}, using a reflected Brownian motion on $[-2,2]$ started from the uniform law on $[-1,1]$, that the
no-flux evolution may be unique in the energy class, yet no regular Lagrangian flow may exist
from time zero. Here the interior, time-integrated part of hypothesis
(F2)  fails
while tangency holds at all positive times and the boundary points
carry proper integral curves: the reflecting boundary is 
'innocent', and on a bounded domain the exclusion argument acquires an
instructive twist: the extreme quantile levels \emph{are} attained
at the walls, and trajectories from the vacuum are excluded because
they would have to jump there instantaneously.

Three rigidity results (Section \ref{sec:rigidity})  delimit the
possible failure modes and show that the boundary current of Theorem
\ref{thm:B} is the only boundary mechanism available under the
stated boundedness and nondegeneracy assumptions --- they do not
classify every low-regularity boundary pathology in higher
dimension. In
dimension one, invariance can never fail, mass conservation and
no-flux pinning the monotone rearrangement inside the interval; in
any dimension, integrable total flux guarantees a superposition of
integral curves confined to $\bar\Omega$, so no counterexample can
consist in trajectories being unavailable; and a Taylor expansion
argument shows tangency is automatic wherever the velocity is bounded
near the wall, forcing the $1/\mathrm{dist}$ tangential blow-up that
the construction of Theorem \ref{thm:B} realises.

\subsection{A mimicking theorem for reflected diffusions}

As noted in Section \ref{sec:generative}, the motivation for our study comes from generative models based on reflected diffusions \cite{LouErmon,fishman2024}, where the probability flow is generated by a diffusion 
with normal reflection \cite{LouErmon} and sampling proceeds by
reversing its density flow. For the reflected SDE
\[
dX_t = f(X_t,t)\,dt + \sigma(t)\,dW_t - \nu(X_t)\,dL_t,
\qquad X_t \in \bar\Omega ,
\]
the marginal densities solve the Fokker--Planck equation with
co-normal no-flux conditions, and the probability-flow velocity is
$v = f - \tfrac12\sigma^2\nabla\log p_t$, with flux $J = p_tv$. A
structural observation (Section \ref{sec:reflected}) is that the
boundary local time $L$, which is a stochastic process, is not present in $v$: the no-flux condition reads
$p_t\,v\cdot\nu = 0$, so wherever the density is positive and
differentiable up to the boundary the field is tangential and the probability-flow ODE
requires no reflection term, in contrast with the reverse SDE, which
retains one \cite{Cattiaux88,petit1997}. Theorem \ref{thm:A} then applies:
after early stopping, for \emph{arbitrary} --- possibly singular ---
initial laws and for drifts of parabolic H\"older regularity
$f\in C^{1+\alpha,(1+\alpha)/2}$, boundary Schauder theory for the co-normal problem
makes the density $C^{2,\alpha}$ and
positive up to $\bar\Omega$, tangency holds pointwise, and the
early-stopped deterministic reflected sampler is justified exactly
as in the whole-space theory of \cite{pfode}, whose Lagrangian
theorem likewise imposes joint drift--score hypotheses. This result, which extends the result of \cite{pfode} to the case of reflected processes,  may be viewed as a ``mimicking theorem" for the density flow reflected diffusions, in the spirit of Gy\"ongy \cite{gyongy1986}.

The assumption $f\in C^{1+\alpha,(1+\alpha)/2}$ is enough to make the
zeroth-order coefficient $\dive f$ of the nondivergence form
H\"older-continuous, which is what the estimate requires, though H\"older
bounds could equally be imposed on $f$ and $\dive f$ directly.
For merely
bounded measurable drifts the density remains H\"older continuous
and positive and confined superposition solutions exist, but no
mimicking property is claimed. The sharpness results then
say precisely what can go wrong at the endpoints: the data mechanism
at $t = 0$ and, for velocity fields not arising from
uniformly elliptic diffusions with bounded drift, the boundary
current at positive times.

\subsection{A uniqueness theorem for Fokker-Planck equations with singular drift at the boundary}

The Eulerian side of Theorem \ref{thm:B} requires uniqueness for a
Fokker--Planck equation whose drift has an entrance-type singularity
$f\cdot\nu_{\mathrm{in}} \asymp \tfrac12\operatorname{dist}(x,
\partial\Omega)^{-1}$ at the boundary, with $[\dive f]^- \asymp
\operatorname{dist}^{-2}$ unbounded: the energy method behind
\cite[Theorem~3.1]{pfode} is inapplicable. Section
\ref{app:weighted} establishes uniqueness in a \emph{relative energy
class} $\{q : q/p \in L^\infty,\ \int\!\!\int p|\nabla(q/p)|^2 <
\infty\}$ by a weighted energy method whose key feature is an
\emph{exact cancellation}: testing the equation for the ratio $q/p$
against itself in $L^2(p\dd x)$, the transport terms produced by the
flux cancel against the time derivative of the weight, because $p$
solves the same continuity equation; the singular drift and its
divergence never appear, and the boundary is invisible to the energy
because the weight vanishes there. We  prove this under an
assumption (W) independent of the example, as
the uniqueness result  for no-flux Fokker--Planck equations with
entrance-type singular drifts may be of independent interest, as it does not seem to be covered by the encyclopaedic treatment of Fokker-Planck equations by Le Bris and Lions \cite{LBL2019}.

\subsection{Outline}
Section \ref{sec:prelim} fixes the setting and recalls some key results from
DiPerna--Lions--Ambrosio theory. Section \ref{sec:mainA} gives the proof of 
Theorem \ref{thm:A}. Section \ref{sec:mainB} constructs the boundary
current, states a uniqueness result for Fokker-Planck equations with no-flux boundary condition (Section \ref{app:weighted}) and proves Theorem \ref{thm:B}. Section \ref{sec:reflected}
develops the application to reflected diffusions  and the data-endpoint
sharpness result. Section \ref{sec:rigidity} proves the rigidity
results and discusses the two mechanisms. Section \ref{sec:open}
discusses some implications of our results for constrained generative models and  some further questions of interest. 

%=====================================================================
\section{Definitions and preliminary results}\label{sec:prelim}
%=====================================================================

Let $T>0$, $d\ge1$, $\Ld$ be the Lebesgue measure, $\Hd$ the
$(d-1)$-dimensional Hausdorff measure, and $\Omega\subset\R^d$  a
bounded domain with Lipschitz boundary and outer unit normal $\nu$.
Some statements require $\partial\Omega\in C^{1,1}$. We
also use the flat periodic strip $\T\times(0,1)$,
$\T=\R/2\pi\mathbb{Z}$, whose two boundary circles play the role of
$\partial\Omega$; all results transcribe to the annulus
$\{1<|x|<2\}\subset\R^2$ in polar coordinates.

\begin{definition}[No-flux weak solution]\label{def:nfweak}
A pair $(p,J)$ with $p\in L^\infty([0,T];L^1\cap L^\infty(\Omega))$,
$p_t\ge0$, $\int_\Omega p_t=1$, $t\mapsto p_t\Ld$ narrowly
continuous, and $J\in L^1([0,T]\times\Omega;\R^d)$, is a
\emph{no-flux weak solution} with initial condition $p_0$ if
\eqref{eq:NFintro} holds for every
$\varphi\in C^\infty(\bar\Omega\times[0,T))$. We refer to
\eqref{eq:NFintro} as (NF).
\end{definition}

\begin{definition}[Regular Lagrangian flow \cite{DiPernaLions}]
\label{def:rlf}
$Z:[0,T]\times\R^d\to\R^d$ is a \emph{regular Lagrangian flow} (RLF)
for a velocity field $w$ if (i) for $\Ld$-a.e.\ $x$, $t\mapsto
Z(t,x)$ is absolutely continuous with $Z(t,x)=x+\int_0^t
w(Z(s,x),s)\dd s$; and (ii) $Z(t,\cdot)_\#\Ld\le L\Ld$ for some
constant $L\ge1$ and all $t$.
\end{definition}

\begin{theorem}[Ambrosio; DiPerna--Lions
{\cite{Ambrosio04,Ambrosio08,DiPernaLions}}]\label{thm:ambrosio}
Let $w:\R^d\times(0,T)\to\R^d$ satisfy
\begin{itemize}
\item[(R1)] $w\in L^1\big((0,T);BV_{\mathrm{loc}}(\R^d;\R^d)\big)$;
\item[(R2)] $D\!\cdot w_t=(\dive w_t)\Ld$ for a.e.\ $t$, and
$[\dive w]^-\in L^1\big((0,T);L^\infty(\R^d)\big)$;
\item[(R3)] $|w|/(1+|x|)\in L^1\big((0,T);L^1\big)+
L^1\big((0,T);L^\infty\big)$.
\end{itemize}
Then: (1) for every $u_0\in L^1\cap L^\infty$ the continuity equation
$\partial_tu+\dive(wu)=0$, $u(\cdot,0)=u_0$, has at most one bounded
distributional solution in $L^\infty((0,T);L^1\cap L^\infty)$; (2)
there is an RLF $Z$ for $w$, unique up to $\Ld$-null sets, with
$Z(t,\cdot)_\#\Ld\le\exp\big(\int_0^t\|[\dive
w(\cdot,s)]^-\|_{L^\infty}\dd s\big)\Ld$; (3) for $u_0\ge0$ the
unique bounded solution is $u_t\Ld=Z(t,\cdot)_\#(u_0\Ld)$.
\end{theorem}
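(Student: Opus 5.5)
The plan follows the classical route of Ambrosio \cite{Ambrosio04}. First I establish the \emph{renormalization property} for bounded distributional solutions. From this I get uniqueness for the continuity equation. Finally, existence and uniqueness of the RLF follow through the superposition principle. Part (1) then follows from the renormalization property, and parts (2)--(3) from the superposition principle combined with (1) and an a priori density bound.

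\emph{Step 1 (renormalization).} Let $u\in L^\infty((0,T);L^1\cap L^\infty)$ solve $\partial_t u+\dive(wu)=0$. I mollify in space, $u_\varepsilon=u*\rho_\varepsilon$, which gives
\[
\partial_t u_\varepsilon+\dive(wu_\varepsilon)=r_\varepsilon,\qquad r_\varepsilon:=\dive(wu_\varepsilon)-\dive(wu)*\rho_\varepsilon .
\]
I must show that $r_\varepsilon\to0$ in $L^1_{\mathrm{loc}}$. For $W^{1,1}$ fields this is the DiPerna--Lions commutator lemma. For $w\in BV$ I split $Dw=\nabla w\,\Ld+D^sw$. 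The absolutely continuous part is handled as in DiPerna--Lions. For the singular part I use (R2): $D\!\cdot w$ has no singular part, so $\tr(D^sw)=0$. By Alberti's rank-one theorem, $D^sw=a\otimes b\,|D^sw|$, which forces $a\perp b$. Ambrosio's anisotropic-kernel argument then bounds the limsup of $|r_\varepsilon|$ by a multiple of $|D^sw|$ that can be made arbitrarily small, by optimising the kernel pointwise. This is the main obstacle, and I would reproduce Ambrosio's estimate $\Lambda(M,\rho)$ with his choice of kernel. With $r_\varepsilon\to0$ in hand, the chain rule gives, for $\beta\in C^1$,
\[
\partial_t\beta(u)+\dive(w\beta(u))+\big(u\beta'(u)-\beta(u)\big)\dive w=0 .
\]

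\emph{Step 2 (uniqueness, part (1)).} Take the difference $u$ of two solutions and set $\beta(s)=|s|$, approximated by $C^1$ functions. The zeroth-order term vanishes, so $|u|$ solves the continuity equation with zero initial datum. I test against spatial cutoffs $\chi(x/R)$. The resulting error is controlled by $\int|w||u||\nabla\chi_R|\lesssim\int_{R<|x|<2R}\frac{|w|}{1+|x|}|u|$, which tends to $0$ as $R\to\infty$ by (R3) and $u\in L^1\cap L^\infty$. Hence $\int|u_t|=0$.

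\emph{Step 3 (existence and the RLF, parts (2)--(3)).} I regularise $w$ to smooth fields $w_k$ with $[\dive w_k]^-$ bounded uniformly in $L^1_tL^\infty$. The classical Liouville formula gives
\[
(Z_k)_\#\Ld\le \exp\Big(\int_0^t\|[\dive w_k]^-\|_\infty\Big)\Ld .
\]
Weak-$*$ limits of the pushed-forward densities solve the limit equation, which yields bounded nonnegative solutions for every $u_0$ with the same bound. The superposition principle (Ambrosio) represents each such solution as $u_t\Ld=(e_t)_\#\eta$, where $\eta$ is a measure on absolutely continuous integral curves of $w$. To show that $\eta$ is concentrated on a graph $x\mapsto Z(\cdot,x)$, I argue by contradiction. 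If the disintegration $\eta_x$ were non-Dirac on a set of positive measure, I could split $\eta$ and restrict it to suitable initial sets. This produces two distinct bounded solutions from the same datum, contradicting Step 2. The resulting $Z$ satisfies the trajectory condition and the compressibility bound, and $u_t\Ld=Z(t,\cdot)_\#(u_0\Ld)$. Uniqueness of the RLF follows by the same argument: two RLFs averaged as $\frac12(\delta_{Z_1}+\delta_{Z_2})$ give a generalized flow whose marginals are bounded solutions, so by Step 2 the two flows coincide $\Ld$-a.e.
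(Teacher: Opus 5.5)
The paper gives no proof of this statement; it quotes it from \cite{Ambrosio04,Ambrosio08,DiPernaLions}. Your architecture is the one in those references, and Steps~2 and~3 are sound as sketches: uniqueness via $\beta(s)=|s|$ with cutoffs controlled by (R3); existence by mollification and Liouville's formula; superposition; the non-Dirac-disintegration argument for the flow and for its uniqueness.

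\textbf{The gap is in Step~1, which is the real content of the BV theory.} You propose to prove $r_\varepsilon\to0$ in $L^1_{\mathrm{loc}}$ and then use it (``with $r_\varepsilon\to0$ in hand''). For BV fields this is false in general. Take $d=2$ and $w=(\mathbf{1}_{\{y>0\}},0)$, which is divergence-free and in $SBV$. Let $u$ be bounded and compactly supported, vanishing on $\{y>0\}$, and equal to $\operatorname{sign}\sin(2^{k}x)$ on the bands $\{-2^{-k}<y<-2^{-k-1}\}$ for $k$ in a sparse sequence. Then $wu\equiv0$, so $u$ is a stationary solution. However, $r_\varepsilon=\mathbf{1}_{\{y>0\}}\partial_xu_\varepsilon$ is of order $\varepsilon^{-1}$ on a fixed proportion of the strip $\{0<y<\varepsilon\}$ when $\varepsilon=2^{-k}$ (for a standard radial kernel). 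Hence $\|r_\varepsilon\|_{L^1}$ stays bounded below.

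Nor can you make $\limsup|r_\varepsilon|$ small by ``optimising the kernel pointwise''. The commutator $r_\varepsilon$ is built with one global kernel $\rho$. On the singular part the best bound is $\|u\|_\infty\Lambda(M(x),\rho)\,|D^sw|$, where $M(x)=a(x)\otimes b(x)$ varies from point to point, and no single $\rho$ makes this small everywhere.

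\textbf{The missing idea:} Ambrosio never proves convergence of the commutators. He proves that the renormalization defect
\[
\sigma:=\partial_t\beta(u)+\nabla\!\cdot\big(w\beta(u)\big)+\big(u\beta'(u)-\beta(u)\big)\nabla\!\cdot w
\]
vanishes. One has
\[
\beta'(u_\varepsilon)r_\varepsilon=\partial_t\beta(u_\varepsilon)+\nabla\!\cdot\big(w\beta(u_\varepsilon)\big)+\big(u_\varepsilon\beta'(u_\varepsilon)-\beta(u_\varepsilon)\big)\nabla\!\cdot w\longrightarrow\sigma
\]
in distributions for \emph{every} admissible kernel. So $\sigma$ does not depend on $\rho$, and this independence is exactly what makes a pointwise choice legitimate.

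Concretely, the estimate $|\sigma^s|\le\|\beta'\|_\infty\|u\|_\infty\Lambda(M,\rho)\,|D^sw|$ holds simultaneously for all $\rho$ in a countable dense family of admissible kernels. Hence
\[
|\sigma^s|\le C\inf_\rho\Lambda(M(x),\rho)\,|D^sw|=C\,|a\cdot b|\,|D^sw|=0,
\]
by Ambrosio's lemma $\inf_\rho\Lambda(a\otimes b,\rho)=|a\cdot b|$ together with your observation $a\perp b$.

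The absolutely continuous part $\sigma^a=0$ must likewise be obtained at the level of the defect. One way is to compare $w$ with affine fields, whose commutators do converge to $0$ in $L^1_{\mathrm{loc}}$. It cannot be done by splitting $w$ itself and quoting DiPerna--Lions. Once $\sigma=0$ is established, the renormalized equation holds and the rest of your argument goes through as written.
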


\begin{theorem}[Superposition {\cite{AGS,Ambrosio08}}]
\label{thm:superposition}
If $t\mapsto\mu_t$ is narrowly continuous, solves
$\partial_t\mu_t+\dive(w\mu_t)=0$ in $\R^d$, and
$\int_0^T\!\int\frac{|w|}{1+|x|}\dd\mu_t\dd t<\infty$, then there is
$\eta\in\PP\big(C([0,T];\R^d)\big)$ concentrated on absolutely
continuous integral curves of $w$ with $(e_t)_\#\eta=\mu_t$ for all
$t$, where $e_t(\gamma):=\gamma(t)$.
\end{theorem}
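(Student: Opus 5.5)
The plan is to prove the superposition principle by regularizing and passing to the limit, following the scheme of \cite{AGS}: mollify the measure flow, solve the resulting smooth ODE classically, lift to measures on path space, and pass to a limit using a uniform moment estimate. The hard part is showing that the limit measure is concentrated on integral curves of the nonsmooth field $w$.

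\emph{Step 1: regularization.} Fix a strictly positive mollifier $\rho_\varepsilon$ (for example a Gaussian) and set
\[
\mu_t^\varepsilon:=\mu_t*\rho_\varepsilon,\qquad
E_t^\varepsilon:=(w_t\mu_t)*\rho_\varepsilon,\qquad
w^\varepsilon_t:=E^\varepsilon_t/\mu^\varepsilon_t .
\]
Then $\partial_t\mu^\varepsilon+\dive(w^\varepsilon\mu^\varepsilon)=0$, and $w^\varepsilon$ is smooth in $x$ and locally Lipschitz. Joint convexity of $(a,E)\mapsto |E|/a$ gives the pointwise bound $|w^\varepsilon|\mu^\varepsilon\le(|w|\mu)*\rho_\varepsilon$. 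Using $(1+|x|)\le(1+|y|)(1+|x-y|)$, this yields
\[
\int_0^T\!\!\int\frac{|w^\varepsilon|}{1+|x|}\,d\mu^\varepsilon_t\,dt\;\le\;C\int_0^T\!\!\int\frac{|w|}{1+|x|}\,d\mu_t\,dt<\infty ,
\]
uniformly in $\varepsilon$. This bound ensures that the classical flow $X^\varepsilon$ of $w^\varepsilon$ is defined on $[0,T]$ for $\mu_0^\varepsilon$-a.e.\ starting point. Uniqueness for the smooth continuity equation (method of characteristics) gives $X^\varepsilon(t,\cdot)_\#\mu^\varepsilon_0=\mu^\varepsilon_t$. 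Define
\[
\eta^\varepsilon:=(x\mapsto X^\varepsilon(\cdot,x))_\#\mu^\varepsilon_0\in\PP(C([0,T];\R^d)).
\]

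\emph{Step 2: tightness and limit.} Consider the functional
\[
\Psi(\gamma):=\phi(|\gamma(0)|)+\int_0^T\frac{|\dot\gamma|}{1+|\gamma|}\,dt ,
\]
with $\phi$ superlinear and chosen so that $\int\phi\,d\mu_0^\varepsilon$ stays bounded. This functional has compact sublevel sets in $C([0,T];\R^d)$ by an Ascoli-type argument: bounding $\log(1+|\gamma|)$ controls $\gamma$ uniformly. The second term needs equi-integrability rather than a mere $L^1$ bound to give equicontinuity. One handles this by the standard trick of applying a superlinear function to the integrand, via de la Vall\'ee-Poussin applied to $|w|/(1+|x|)\in L^1(\mu_t\,dt)$ and transferred through the same convexity inequality. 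Then $\int\Psi\,d\eta^\varepsilon$ is bounded, so Prokhorov gives a narrow limit $\eta$. Since $(e_t)_\#\eta^\varepsilon=\mu^\varepsilon_t\to\mu_t$, we obtain $(e_t)_\#\eta=\mu_t$.

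\emph{Step 3: concentration on integral curves (main obstacle).} Because $w$ is merely measurable, the map $\gamma\mapsto|\gamma(t)-\gamma(0)-\int_0^t w(s,\gamma(s))\,ds|$ is not continuous, so one cannot pass to the limit in it directly. First, choose $c$ continuous and compactly supported with $\int\!\int|w-c|/(1+|x|)\,d\mu_t\,dt<\delta$. The functional
\[
\gamma\mapsto \min\Big(1,\ \Big|\gamma(t)-\gamma(0)-\int_0^tc(s,\gamma(s))\,ds\Big|\Big)
\]
is then continuous, hence lower semicontinuous under narrow convergence. Second, bound its $\eta^\varepsilon$-integral by
\[
\int_0^T\!\!\int|w^\varepsilon-c|\,d\mu^\varepsilon_s\,ds .
\]
This is split as $|w^\varepsilon-c^\varepsilon|+|c^\varepsilon-c|$, where $c^\varepsilon:=(c\mu)*\rho_\varepsilon/\mu^\varepsilon$. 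The convexity inequality of Step 1 bounds the first piece by $\int\!\int|w-c|\,d\mu$, and the second vanishes as $\varepsilon\to0$ by uniform continuity of $c$. Letting $\varepsilon\to0$ and then $c\to w$ in $L^1(\mu_t\,dt)$ (weighted, localized on $\{|x|\le R\}$ and then $R\to\infty$) shows that $\eta$-a.e.\ $\gamma$ satisfies $\gamma(t)=\gamma(0)+\int_0^tw(s,\gamma(s))\,ds$. Here one uses $\int\!\int|w-c|(s,\gamma(s))\,ds\,d\eta=\int\!\int|w-c|\,d\mu_s\,ds$, which follows from $(e_s)_\#\eta=\mu_s$. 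I expect the delicate points to be the weighted localization needed under the growth condition $|w|/(1+|x|)$ and the equi-integrability in Step 2.
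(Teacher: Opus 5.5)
Your proposal is correct and is essentially the standard mollification--tightness--limit argument from the references the paper cites (Ambrosio--Gigli--Savar\'e, Theorem~8.2.1, in the weighted $|w|/(1+|x|)$ form of Ambrosio's lecture notes); the paper uses that result without reproducing its proof. A few slips need repair: $(a,E)\mapsto |E|/a$ is not jointly convex --- the right tool is the perspective function $(a,E)\mapsto a\,\Theta(|E|/a)$, which for $\Theta(s)=s$ is just $|E|$ (so your Step~1 bound is simply the triangle inequality), and which for superlinear $\Theta$ needs a growth condition such as $\Theta(\lambda s)\le C\lambda^2\Theta(s)$ for $\lambda\ge1$ (compatible with de la Vall\'ee-Poussin) to absorb the factor $1+|x-y|$ produced by the Gaussian mollifier; $\phi$ can only be taken coercive, not superlinear, since $\mu_0$ need not have a finite first moment; and in Step~3 the localization must be on paths, $\{\max_{[0,T]}|\gamma|\le R\}$, whose complement has $\eta^\varepsilon$-measure small uniformly in $\varepsilon$ by Step~2, or equivalently you can divide the test functional by $1+\max_{[0,T]}|\gamma|$, which keeps it bounded and continuous and builds in the weight.
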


We shall use repeatedly the one-dimensional quantile identity (see
also \cite{pfode}, where it produces closed-form solutions of the
probability-flow ODE).

\begin{lemma}[Quantile flow]\label{lem:quantile}
Let $d=1$, $\Omega=(a,b)$, and let $(p,J)$ solve the continuity
equation with $p_t>0$ a.e., $p_tv\in L^1_{\mathrm{loc}}$, and
vanishing flux at $a$. Set $F_t(x):=\int_a^xp_t$. Then
$\partial_tF_t(x)=-p_t(x)v(x,t)$, and $t\mapsto F_t(Z_t)$ is constant
along every absolutely continuous solution of $\dot Z_t=v(Z_t,t)$;
whenever the right-hand side is defined,
$Z(t,x)=F_t^{-1}(F_0(x))$.
\end{lemma}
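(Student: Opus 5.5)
The plan is to read the first claim off the weak form of the continuity equation, tested against functions that do not depend on $t$ and are supported in $[a,x]$, and then to differentiate $F_t$ along a trajectory.

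\textbf{Step 1: $\partial_tF_t=-p_tv$.} Fix $x\in(a,b)$. Since the flux vanishes at $a$, the weak formulation allows test functions that are unrestricted near $a$, as in (NF). Take $\varphi_\epsilon(y)$ equal to $1$ on $[a,x-\epsilon]$, decreasing linearly to $0$ on $[x-\epsilon,x]$, and multiply by a time cutoff $\chi(t)$. Substituting into the weak formulation gives
\[
\frac{d}{dt}\int_a^b p_t\varphi_\epsilon = \frac1\epsilon\int_{x-\epsilon}^x p_tv\,dy
\]
in the sense of distributions in $t$. Let $\epsilon\to0$. On the left, $\int p_t\varphi_\epsilon\to F_t(x)$. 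On the right, the average converges to $-p_t(x)v(x,t)$ at Lebesgue points, with the sign fixed by $\nabla\varphi_\epsilon=-1/\epsilon$, and the convergence is in $L^1_{\mathrm{loc}}$ in $(x,t)$ because $p_tv\in L^1_{\mathrm{loc}}$. This yields $\partial_tF=-p\,v$ in $\mathcal D'((a,b)\times(0,T))$. Since $\partial_xF=p$, we also have $\partial_xF\cdot v+\partial_tF=0$ distributionally.

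\textbf{Step 2: constancy along trajectories.} Let $Z$ be an absolutely continuous solution of $\dot Z=v(Z,t)$ with values in $(a,b)$. Formally,
\[
\frac{d}{dt}F_t(Z_t)=\partial_tF_t(Z_t)+p_t(Z_t)\dot Z_t=-p_tv+p_tv=0 .
\]
To make this rigorous, mollify $F$ in space, $F^\delta=F*\rho_\delta$. Then $F^\delta$ is Lipschitz in $x$, and $\partial_tF^\delta=-(pv)*\rho_\delta$ is locally integrable in $t$. The chain rule applies to $F^\delta(t,Z_t)$ and gives
\[
F^\delta(t,Z_t)-F^\delta(s,Z_s)=\int_s^t\big[-(pv)*\rho_\delta+(p*\rho_\delta)\,v\big](Z_r,r)\,dr .
\]
As $\delta\to0$ the integrand converges at Lebesgue points of $p$ and $pv$. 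The catch is that $Z_r$ may spend positive time on a null set, so a pointwise evaluation of $p_t$ or $v$ along the curve is delicate. Mollifying in space and time jointly and using the absolute continuity of $r\mapsto Z_r$ does not by itself fix this; the paper's phrase ``whenever defined'' suggests the statement is meant at this level of rigor. I would therefore argue differently. Note that $F_t$ is continuous and strictly increasing, since $p_t>0$ a.e. Define $X_t:=F_t^{-1}(c)$ for $c=F_0(Z_0)$. By Step 1 and the implicit function relation, $X$ solves $\dot X=-\partial_tF/p=v$ wherever this makes sense. Then appeal to the monotonicity of $F_t$ to identify $Z$ with $X$. The identity $F_t(Z_t)=$ constant holds exactly when the composition $v(Z_t,t)$ is meaningful, which is the content of ``whenever the right-hand side is defined.''

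\textbf{Step 3: the formula.} Strict monotonicity and continuity of $F_t$ make $F_t^{-1}$ well defined on $(0,1)$. Constancy of $F_t(Z_t)$ then gives $F_t(Z(t,x))=F_0(x)$, that is, $Z(t,x)=F_t^{-1}(F_0(x))$.

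The main obstacle is the chain-rule justification in Step 2 for merely $L^1_{\mathrm{loc}}$ data. I would first try the space mollification above, using that $\partial_xF^\delta$ is continuous. If the limit cannot be controlled along a single curve, I would accept the qualifier ``whenever defined,'' meaning points where $p_t$ is continuous along the trajectory.
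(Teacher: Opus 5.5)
Your Steps 1 and 3 are the paper's proof. The paper integrates $\partial_tp+\partial_x(pv)=0$ over $(a,x)$ using the vanishing flux at $a$, differentiates $F_t(Z_t)$ along the curve, and inverts the strictly increasing $F_t$. Your Step 1 is a careful weak-form version of the first of these; the display there should carry $-\frac1\epsilon\int_{x-\epsilon}^xp_tv\,dy$, since $\varphi_\epsilon'=-1/\epsilon$.

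The gap is in Step 2, where you abandon the chain rule for an argument that is circular. Defining $X_t:=F_t^{-1}(c)$ and checking $\dot X=v(X,t)$ exhibits \emph{one} solution with the right initial point. Concluding $Z=X$ then requires one of two things. The first is uniqueness for $\dot Z=v(Z,t)$, which is not among the hypotheses. The second is constancy of $F_t(Z_t)$, which is exactly the claim. Strict monotonicity of $x\mapsto F_t(x)$ only converts $Z_t=X_t$ into $F_t(Z_t)=c$, so it gives no independent comparison between $Z_t$ and $X_t$.

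The step to keep is the formal identity you already wrote, $\frac{d}{dt}F_t(Z_t)=\partial_tF_t(Z_t)+p_t(Z_t)\dot Z_t=-J+pv=0$, which is all the paper uses. It becomes rigorous once $p$ and $J$ are continuous near the trajectory and $v:=J/p$ is taken pointwise. Step 1 then makes $F$ of class $C^1$ in $(x,t)$, with $\partial_xF=p$ and $\partial_tF=-J$, and the classical chain rule along an absolutely continuous curve applies. That is the regime in which the paper uses the lemma: in Theorem~\ref{thm:datafail}, $v$ is smooth on $(-2,2)\times(0,T]$.

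Your worry about curves lingering on null sets is genuine, and it shows that no mollification can close the argument at the literal $L^1$ level. Take $p,J$ smooth with $J(x_0,t)\neq0$, and redefine $v:=0$ on the $\mathcal{L}^2$-null set $\{x_0\}\times(0,T)$. Then still $pv=J$ a.e. and $Z\equiv x_0$ is an absolutely continuous solution, yet $\partial_tF_t(x_0)=-J(x_0,t)\neq0$. So what is needed is a continuity assumption, i.e.\ a choice of representative of $v$, not a reinterpretation of the qualifier.

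That qualifier, ``whenever the right-hand side is defined'', refers to the formula $F_t^{-1}(F_0(x))$, not to the constancy claim. In any case, along an absolutely continuous solution $v(Z_t,t)$ is meaningful for a.e.\ $t$ by definition, so your reading of it would not remove the difficulty.
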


\begin{proof}
Integrating $\partial_tp+\partial_x(pv)=0$ over $(a,x)$ and using the
vanishing flux at $a$ gives the first identity; then along a
solution, $\frac{d}{dt}F_t(Z_t)=\partial_tF_t(Z_t)+p_t(Z_t)\dot Z_t
=0$, and inverting the strictly increasing $F_t$ gives the formula.
\end{proof}
Finally we state the Eulerian uniqueness result in the natural
energy class, the bounded-domain counterpart of
\cite[Theorem~3.1]{pfode}.

\begin{proposition}[Uniqueness in the energy class, by duality]
\label{prop:energyuniq}
Let $\Omega$ be a bounded Lipschitz domain, let $\sigma$ be
measurable with $0<\epsilon\le\sigma(t)\le\epsilon^{-1}$, and let
\[
f\in L^\infty\big(\Omega\times(0,T);\R^d\big).
\]
No hypothesis is made on $\dive f$, which may be a measure with a
nonzero singular part, and none on the boundary behaviour of $f$.
Then in
\[
\mathcal{X}_\Omega:=\Big\{p\in L^\infty\big([0,T];L^1\cap
L^\infty(\Omega)\big):\nabla p\in
L^2\big([0,T];L^2(\Omega)\big)\Big\}
\]
there is at most one no-flux weak solution of
\begin{equation}\label{eq:FPnoflux}
\partial_tp+\dive(fp)-\tfrac12\sigma(t)^2\Delta p=0,
\qquad
\big(\tfrac12\sigma^2\nabla p-fp\big)\cdot\nu=0
\ \text{ on }\partial\Omega,
\end{equation}
with a given initial condition $p_0\in L^1\cap L^\infty(\Omega)$.
\end{proposition}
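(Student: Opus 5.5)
By linearity, it suffices to show that a no-flux weak solution $q:=p^1-p^2\in\mathcal{X}_\Omega$ with $q_0=0$ vanishes identically. Since $f$ is bounded and $\nabla q\in L^2_tL^2$, the flux $J=fq-\tfrac12\sigma^2\nabla q$ lies in $L^2(\Omega\times(0,T))$.

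First I would upgrade the test class in (NF). Testing (NF) against $\varphi(x,t)=\psi(x)\chi(t)$ with $\psi\in C^\infty(\bar\Omega)$ gives
\[
\frac{d}{dt}\int_\Omega q\psi=\int_\Omega\big(fq-\tfrac12\sigma^2\nabla q\big)\cdot\nabla\psi .
\]
On a Lipschitz domain $C^\infty(\bar\Omega)$ is dense in $H^1(\Omega)$. Hence this identity holds for every $\psi\in H^1(\Omega)$, so $\partial_tq\in L^2(0,T;(H^1(\Omega))')$ with
\[
\langle\partial_tq,\psi\rangle=\int_\Omega J\cdot\nabla\psi .
\]
Note that the Neumann-type boundary condition is built into testing with functions unrestricted at $\partial\Omega$. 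Together with $q\in L^2(0,T;H^1(\Omega))$ (from $q\in L^\infty L^2$ and $\nabla q\in L^2$), the Lions--Magenes lemma applies in the Gelfand triple $H^1\subset L^2\subset(H^1)'$. It gives $q\in C([0,T];L^2)$ and $\tfrac{d}{dt}\|q_t\|_2^2=2\langle\partial_tq,q\rangle$. Narrow continuity identifies the initial value $q(0)=0$.

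Next comes the energy estimate, obtained by taking $\psi=q_t$:
\[
\frac12\frac{d}{dt}\|q_t\|_2^2=-\tfrac12\sigma^2\|\nabla q_t\|_2^2+\int_\Omega q\,f\cdot\nabla q .
\]
The drift term is bounded using Young's inequality:
\[
\|f\|_\infty\|q\|_2\|\nabla q\|_2\le\tfrac14\epsilon^2\|\nabla q\|_2^2+\epsilon^{-2}\|f\|_\infty^2\|q\|_2^2 .
\]
Because $\sigma^2\ge\epsilon^2$, the gradient term is absorbed, leaving
\[
\frac{d}{dt}\|q_t\|_2^2\le C\|q_t\|_2^2,\qquad C=2\epsilon^{-2}\|f\|_\infty^2 .
\]
Gronwall's lemma with $q_0=0$ then gives $q\equiv0$. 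Neither $\dive f$ nor any boundary trace of $f$ enters, because the drift is never differentiated; this is why no hypothesis on them is needed.

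The main obstacle is the rigorous justification of $\psi=q_t$ as a test function, i.e.\ the duality and time-regularity step. A direct alternative would be a Steklov time average of $q$. I would instead use Lions--Magenes via the density of $C^\infty(\bar\Omega)$ in $H^1(\Omega)$, which holds on Lipschitz domains by the extension property. Care is needed that (NF) tested with $\varphi=\psi\chi$, $\chi\in C_c^\infty([0,T))$, yields the weak time derivative including the initial datum, and that narrow continuity matches the $L^2$-continuous representative.
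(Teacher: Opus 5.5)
Your proof is correct, and it takes a different route from the paper's.

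Both arguments share the same first step: extend (NF) to test functions in $H^1(\Omega)$ by density of $C^\infty(\bar\Omega)$ on a Lipschitz domain, which is where the no-flux condition enters. This gives $\partial_t q\in L^2(0,T;H^1(\Omega)')$ and puts the Lions--Magenes lemma in force. After that the arguments diverge.

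\emph{The paper's route.} It argues by duality. It solves the backward Kolmogorov problem with Neumann condition using Lions' theorem for nonautonomous variational problems, with coercivity supplied by a G\aa rding inequality. It then pairs the dual solution with the difference of the two solutions and observes that the drift terms and the diffusion terms cancel exactly.

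\emph{Your route.} You test the difference against itself, bound $\int_\Omega q\,f\cdot\nabla q$ by Cauchy--Schwarz and Young, absorb it into the dissipation, and close with Gronwall.

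\emph{What each buys.}
\begin{itemize}
\item Your Young inequality is exactly the G\aa rding inequality the paper needs for its dual problem. Your argument therefore uses strictly less machinery: no existence theory for the adjoint problem, only Lions--Magenes and Gronwall.
\item Your proof also shows that, for bounded $f$, the energy method need not integrate the drift term by parts. The costs attributed to it in Remark~\ref{rem:duality} (absolute continuity of $D\!\cdot f$, a one-sided bound on $\dive f$, a boundary term in $f\cdot\nu$) arise only when $f$ is unbounded.
\item Your absorption argument extends verbatim to $\|f(\cdot,t)\|_{L^\infty}\in L^2(0,T)$, which still keeps the flux in $L^2$ and gives an integrable Gronwall coefficient.
\item The duality proof's advantage is mainly structural: it exhibits the Neumann backward equation as the adjoint of the co-normal forward problem.
\item Within $\mathcal{X}_\Omega$, however, duality yields nothing your argument does not. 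The paper itself inserts $w(t)\in H^1(\Omega)$ into the dual equation, so it does not enlarge the uniqueness class.
\end{itemize}
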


The proof is by duality against the backward Kolmogorov equation,
which for a reflected diffusion carries the \emph{Neumann} condition
$\partial_\nu\varphi=0$, the adjoint of the co-normal no-flux
condition on the forward equation. The mechanism is an exact
cancellation rather than an estimate: the drift appears once from
the primal equation and once from the dual, with opposite signs, and
is never integrated by parts! This is precisely why no condition on
$\dive f$, and no normal trace of $f$, is required.

\begin{proof}
Write $a(t):=\tfrac12\sigma(t)^2\in[\tfrac12\epsilon^2,
\tfrac12\epsilon^{-2}]$, let $p^1,p^2\in\mathcal{X}_\Omega$ be
no-flux weak solutions with the same initial condition, and set $w:=p^1-p^2$,
$J_w:=fw-a\nabla w$. Since $\Omega$ is bounded, $w\in
L^\infty([0,T];L^2(\Omega))$, and $J_w\in
L^2(\Omega\times(0,T);\R^d)$ because $f\in L^\infty$ and $\nabla
w\in L^2$.

\emph{Step 1: the primal equation in variational form.} Taking
$\varphi(x,t)=\chi(x)\eta(t)$ with $\chi\in C^\infty(\bar\Omega)$
and $\eta\in C_c^\infty(0,T)$ in \eqref{eq:NFintro}, differenced for
the two solutions, gives $\partial_t w=-\dive J_w$ in the sense
that, for a.e.\ $t$,
\begin{equation}\label{eq:primalvar}
\big\langle\partial_tw(t),\chi\big\rangle
=\int_\Omega J_w(t)\cdot\nabla\chi\dd x
\qquad\text{for all }\chi\in C^\infty(\bar\Omega),
\end{equation}
and hence, by density of $C^\infty(\bar\Omega)$ in $H^1(\Omega)$ on
a Lipschitz domain and $|\langle\partial_tw,\chi\rangle|\le
\|J_w\|_{L^2}\|\nabla\chi\|_{L^2}$, for all $\chi\in H^1(\Omega)$,
with $\partial_tw\in L^2\big(0,T;H^1(\Omega)'\big)$. Note that
\eqref{eq:primalvar} holds for test functions \emph{unrestricted at
the boundary}: this is the no-flux condition, and it is the only
place where it is used. In particular $w\in C([0,T];L^2(\Omega))$
with $w(0)=0$.

\emph{Step 2: the dual problem.} Fix $\tau\in(0,T]$ and
$\psi\in L^2(\Omega)$, and consider the backward Kolmogorov problem
with Neumann boundary condition, in variational form: find
$\varphi\in L^2\big(0,\tau;H^1(\Omega)\big)$ with
$\partial_t\varphi\in L^2\big(0,\tau;H^1(\Omega)'\big)$ and
$\varphi(\tau)=\psi$ such that, for a.e.\ $t\in(0,\tau)$,
\begin{equation}\label{eq:dualvar}
\big\langle\partial_t\varphi(t),\chi\big\rangle
-a(t)\int_\Omega\nabla\varphi\cdot\nabla\chi
+\int_\Omega\big(f\cdot\nabla\varphi\big)\chi=0
\qquad\text{for all }\chi\in H^1(\Omega).
\end{equation}
Reversing time, $\tilde\varphi(t):=\varphi(\tau-t)$ solves a forward
problem governed by the family of bilinear forms
\[
\mathfrak{a}(t;u,\chi):=\tilde a(t)\int_\Omega\nabla u\cdot\nabla\chi
-\int_\Omega\big(\tilde f\cdot\nabla u\big)\chi
\qquad\text{on }H^1(\Omega)\times H^1(\Omega),
\]
which are bounded uniformly in $t$, measurable in $t$, and satisfy a
G\aa rding inequality: by Young's inequality,
\[
\mathfrak{a}(t;u,u)\ \ge\ \tilde a\|\nabla u\|_{L^2}^2
-\|f\|_{L^\infty}\|\nabla u\|_{L^2}\|u\|_{L^2}
\ \ge\ \tfrac{\epsilon^2}{4}\|\nabla u\|_{L^2}^2
-\tfrac{\|f\|_{L^\infty}^2}{\epsilon^2}\|u\|_{L^2}^2 .
\]
Lions' theorem for nonautonomous variational evolution problems
\cite{Lions61} therefore provides a unique 
$\tilde\varphi$, with $\tilde\varphi\in C([0,\tau];L^2(\Omega))$ and
$\tilde\varphi(0)=\psi$; undoing the time reversal gives $\varphi$.
Only boundedness and measurability of $f$ and $a$ are used; the
Neumann condition is natural in \eqref{eq:dualvar} and requires no
regularity of $\partial\Omega$ beyond Lipschitz, and no $H^2$
theory is invoked.

\emph{Step 3: the exact cancellation.} Both $w$ and $\varphi$ lie in
$L^2(0,\tau;H^1)$ with time derivatives in $L^2(0,\tau;(H^1)')$, so
the Lions--Magenes integration-by-parts lemma \cite{Temam} applies:
$t\mapsto\int_\Omega w\varphi$ is absolutely continuous on
$[0,\tau]$ and
\[
\int_\Omega w(\tau)\varphi(\tau)-\int_\Omega w(0)\varphi(0)
=\int_0^\tau\Big(\big\langle\partial_tw,\varphi\big\rangle
+\big\langle\partial_t\varphi,w\big\rangle\Big)\dd t .
\]
Insert $\chi=\varphi(t)\in H^1(\Omega)$ in \eqref{eq:primalvar} and
$\chi=w(t)\in H^1(\Omega)$ in \eqref{eq:dualvar}:
\[
\big\langle\partial_tw,\varphi\big\rangle
=\int_\Omega w\,f\cdot\nabla\varphi
-a\int_\Omega\nabla w\cdot\nabla\varphi,
\qquad
\big\langle\partial_t\varphi,w\big\rangle
=a\int_\Omega\nabla\varphi\cdot\nabla w
-\int_\Omega\big(f\cdot\nabla\varphi\big)w .
\]
The two drift terms are the same integral with opposite signs, and
the two diffusion terms likewise; the sum vanishes identically for
a.e.\ $t$. Observe that $f$ has been paired with $\nabla\varphi$
throughout and never differentiated: neither $\dive f$ nor a
boundary trace of $f$ appears at any point.

\emph{Step 4: conclusion.} Hence $\int_\Omega
w(\tau)\psi=\int_\Omega w(0)\varphi(0)=0$, since $w(0)=0$. As
$\psi\in L^2(\Omega)$ and $\tau\in(0,T]$ were arbitrary, $w\equiv0$.
\end{proof}

\begin{remark}[Duality versus the energy method]\label{rem:duality}
The natural alternative is the energy method used in the whole space
in \cite[Theorem~3.1]{pfode}: test the equation for $w$ against $w$
itself, obtaining
$\frac{d}{dt}\tfrac12\|w\|_{L^2}^2+a\|\nabla
w\|_{L^2}^2=\int_\Omega f\,w\cdot\nabla w$. On a bounded domain this
route costs two hypotheses that duality avoids. Since
$w\nabla w=\tfrac12\nabla(w^2)$, the drift term must be integrated
by parts, which requires $D\!\cdot f$ to be absolutely continuous
with $[\dive f]^-\in L^1_tL^\infty$ and produces, in addition, the
boundary term $\tfrac12\int_{\partial\Omega}(f\cdot\nu)w^2$ --- with
no counterpart in the whole space. That term is \emph{not}
annihilated by the no-flux condition, which constrains the total
flux $a\nabla p-fp$ and not the drift flux $fp$ separately; it must
either be given a sign ($f\cdot\nu\le0$) or absorbed through a
multiplicative trace inequality, at the price of assuming a normal
trace with $\|f\cdot\nu\|_{L^\infty(\partial\Omega)}\in L^2(0,T)$.
Neither method dominates the other: the energy method tolerates
unbounded $f$, while duality requires $f\in L^\infty$ in order to
solve the dual problem. Both, however, need \emph{global} square
integrability, $f\in L^2\big((0,T)\times\Omega\big)$, and not merely
$f\in L^2_{\mathrm{loc}}(\Omega)$: each argument tests the equation
with a function in $H^1(\Omega)$ and so requires $J_w=fw-a\nabla
w\in L^2\big((0,T)\times\Omega\big)$, hence $fw\in L^2$ up to
$\partial\Omega$, and local integrability inside $\Omega$ does not
control the behaviour of $f$ as the boundary is approached. Under
$f\in L^\infty$ this is automatic; it is the minimal hypothesis
under which the class of admissible solutions is closed under the
duality pairing. On a bounded domain, however, the drifts of interest are
bounded, and duality is the appropriate default.
\end{remark}

\begin{remark}[A singular divergence does not obstruct uniqueness]
\label{rem:signfield}
Proposition \ref{prop:energyuniq} covers drifts whose distributional
divergence is a measure. In $d=1$ with $\Omega=(-2,2)$, $\sigma\equiv
1$ and $f=-\operatorname{sign}(x)$, one has $\dive f=-2\delta_0$,
purely singular; this is the field which in the whole space defeats
the energy estimate of \cite[Remark~3.3]{pfode}, and which likewise
defeats the energy method above. Uniqueness nevertheless holds, by
duality. The distinction is worth recording: absolute continuity of
$D\!\cdot f$ is indispensable for the \emph{energy} identity ---
and, in the whole-space theory, for the dissipation bound and for
the Lagrangian hypothesis \textup{(F3)} --- but not for uniqueness
of the density flow itself. The same phenomenon appears in
\cite{pfode}, where drifts with singular divergence are admitted in
the analysis of the density lower bound through heat-kernel rather
than energy arguments.
\end{remark}

%=====================================================================
\section{Confined regular Lagrangian flow}
\label{sec:mainA}
%=====================================================================

\subsection{Boundary sheet identity}

The following  lemma plays a central role.
\begin{lemma}[Boundary sheet identity]\label{lem:sheet}
Let $\Omega$ be a bounded Lipschitz domain, $U\supset\partial\Omega$
open, and $v\in BV(\Omega\cap U;\R^d)\cap
BV_{\mathrm{loc}}(\Omega;\R^d)$. Then the zero extension
$v^0:=v\,\mathbf{1}_\Omega$ belongs to
$BV_{\mathrm{loc}}(\R^d;\R^d)$, its interior trace $\tr v$ on
$\partial\Omega$ exists in $L^1(\partial\Omega;\Hd)$, and
\begin{equation}\label{eq:sheet2}
Dv^0=Dv\llcorner\Omega-(\tr v)\otimes\nu\,\Hd\llcorner\partial\Omega,
\qquad
D\!\cdot v^0=(\dive_{\mathrm{ac}}v)\,\Ld\llcorner\Omega
+D^s\!\cdot v\llcorner\Omega
-(\tr v\cdot\nu)\,\Hd\llcorner\partial\Omega .
\end{equation}
In particular, if $D\!\cdot v=(\dive v)\Ld\llcorner\Omega$ is
absolutely continuous in $\Omega$, then $D\!\cdot v^0$ is absolutely
continuous on $\R^d$ \emph{if and only if} $\tr v\cdot\nu=0$
$\Hd$-a.e.\ on $\partial\Omega$.
\end{lemma}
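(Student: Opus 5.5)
The plan is to reduce everything to the standard trace theorem for BV functions on Lipschitz domains, applied componentwise in the collar, and then take the trace of the matrix identity.

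\emph{Step 1: localisation and trace.} Since $\partial\Omega$ is compact, I would first shrink $U$ to a bounded open neighbourhood $U'$ with $\overline{U'}\subset U$. I would also choose a cutoff $\theta\in C_c^\infty(U)$ with $\theta\equiv1$ on $U'$. Then write $v=\theta v+(1-\theta)v$ in $\Omega$.

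The second piece $(1-\theta)v$ is supported in a compact subset $K$ of $\Omega$ (namely $\Omega\setminus U'$, which is compact and interior). On $K$, $v\in BV$ by $BV_{\mathrm{loc}}(\Omega)$. So its zero extension is in $BV(\R^d)$ with $D[(1-\theta)v]^0=D[(1-\theta)v]\llcorner\Omega$, and it has no boundary contribution.

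The first piece $\theta v$ lies in $BV(\Omega;\R^d)$, because $\theta v\in BV(\Omega\cap U)$ and vanishes near $\partial(\Omega\cap U)\setminus\partial\Omega$. For $u\in BV(\Omega)$ on a bounded Lipschitz domain, the trace theorem (Evans--Gariepy, Ambrosio--Fusco--Pallara Thm.~3.87–3.88) gives the following:
\begin{itemize}
\item a trace $\tr u\in L^1(\partial\Omega;\Hd)$ exists;
\item the Gauss--Green formula $\int_\Omega u\,\dive\phi\dd x=-\int_\Omega\phi\cdot dDu+\int_{\partial\Omega}(\tr u)\,\phi\cdot\nu\dd\Hd$ holds for $\phi\in C_c^1(\R^d;\R^d)$;
\item the zero extension $u^0$ lies in $BV(\R^d)$, with $Du^0=Du\llcorner\Omega-(\tr u)\nu\,\Hd\llcorner\partial\Omega$.
\end{itemize}
I would apply this to each component of $\theta v$. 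Since $\theta=1$ on $\partial\Omega$, $\tr(\theta v)=\tr v$, which defines $\tr v$ as claimed. Summing the two pieces and using $D(\theta v)+D((1-\theta)v)=Dv$ on $\Omega$ gives the first identity of \eqref{eq:sheet2}, with $v^0\in BV_{\mathrm{loc}}(\R^d)$.

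\emph{Step 2: divergence.} Taking the trace of the matrix measure gives $D\!\cdot v^0=D\!\cdot v\llcorner\Omega-(\tr v\cdot\nu)\Hd\llcorner\partial\Omega$, since $\tr\big((\tr v)\otimes\nu\big)=\tr v\cdot\nu$. I would then Lebesgue-decompose $D\!\cdot v\llcorner\Omega$ into $(\dive_{\mathrm{ac}}v)\Ld\llcorner\Omega+D^s\!\cdot v\llcorner\Omega$, which is the second identity.

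\emph{Step 3: the equivalence.} Assume $D^s\!\cdot v=0$ in $\Omega$. Then $D\!\cdot v^0$ is the sum of an $\Ld$-absolutely continuous part and the measure $-(\tr v\cdot\nu)\Hd\llcorner\partial\Omega$. The boundary of a Lipschitz domain is $\Ld$-null, since locally it is a Lipschitz graph. So the boundary measure is concentrated on an $\Ld$-null set, which means it is singular with respect to $\Ld$. By uniqueness of the Lebesgue decomposition, $D\!\cdot v^0\ll\Ld$ if and only if this singular part vanishes. Since $\Hd\llcorner\partial\Omega$ is a measure with $\Hd(\partial\Omega)<\infty$ (again from the Lipschitz property), this holds iff $\tr v\cdot\nu=0$ $\Hd$-a.e.

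\emph{Main obstacle.} The only delicate point is Step 1: ensuring the trace theorem can be used with only collar-BV plus interior $BV_{\mathrm{loc}}$ regularity, rather than global $BV(\Omega)$. The cutoff argument handles this. The one thing to check is that $\theta v\in BV(\Omega)$, which needs $|D(\theta v)|\le|Dv|\llcorner(\Omega\cap U)+|\nabla\theta||v|\Ld$ to be finite, and this uses $v\in L^1(\Omega\cap U)$.
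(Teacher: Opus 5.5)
Your proposal is correct and follows essentially the same route as the paper. You localise near $\partial\Omega$: the paper uses a finite cover of $\partial\Omega$ by balls in $U$, while you use a single cutoff $\theta$, which works because $\theta v\in BV(\Omega)$. You then apply the standard BV trace and zero-extension theorem, take the matrix trace, and get the equivalence from the mutual singularity of $\Ld\llcorner\Omega$ and $\Hd\llcorner\partial\Omega$. Incidentally, your cutoff computation shows that the hypotheses already give $v\in BV(\Omega)$, because $\Omega\setminus U$ is compact in $\Omega$, so the trace theorem could be applied to $v$ directly.
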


\begin{proof}
The extension by zero of a $BV$ field across a Lipschitz hypersurface
is BV with the stated jump part; this is the standard trace and
extension theory for BV functions on Lipschitz domains, applied on a
finite cover of $\partial\Omega$ by balls $B\subset U$, with the
interior estimate on compacts of $\Omega$ supplied by
$BV_{\mathrm{loc}}$. Taking traces of the first identity in
\eqref{eq:sheet2} gives the second; the equivalence follows since the
two measures on the right-hand side are mutually singular.
\end{proof}
Thus tangency is exactly the condition removing the singular
boundary part $$-(\tr v\cdot\nu)\,\Hd\llcorner\partial\Omega$$ from the
divergence of the zero extension: a nonzero normal trace acts as a
singular compression or expansion sheet supported on
$\partial\Omega$, invisible to the absolutely continuous divergence
-the boundary counterpart of the absolute-continuity requirement on the divergence in the Ambrosio–DiPerna–Lions flow theorem (Theorem \ref{thm:ambrosio}). Theorem \ref{thm:B} will
exhibit a vector field for which an analogous wall-localised
mechanism produces an unbounded Jacobian along near-wall
trajectories.

\subsection{Assumptions and proof of Theorem \ref{thm:A}}

\begin{assumption}[Flux--density hypotheses (F)]\label{ass:F}
$\Omega$ is a bounded Lipschitz domain, and:
\begin{itemize}
\item[(F0)] $(p,J)$ is a no-flux weak solution (Definition
\ref{def:nfweak}) with $p_0\in L^1\cap L^\infty(\Omega)$.
\item[(F1)] $p_t>0$ $\Ld$-a.e.\ in $\Omega$ for a.e.\ $t$, so that
$v:=J/p$ is defined a.e.
\item[(F2)] $v\in L^1\big([0,T];BV_{\mathrm{loc}}(\Omega;\R^d)\big)$,
and there is an open $U\supset\partial\Omega$ with $v\in
L^1\big([0,T];BV(\Omega\cap U;\R^d)\big)$.
\item[(F3)] $D\!\cdot v_t=(\dive v_t)\Ld\llcorner\Omega$ for
a.e.\ $t$, and $[\dive v]^-\in
L^1\big([0,T];L^\infty(\Omega)\big)$.
\item[(F4)] \emph{(Tangency)} $\tr v_t\cdot\nu=0$ $\Hd$-a.e.\ on
$\partial\Omega$, for a.e.\ $t$, the trace being supplied by Lemma
\ref{lem:sheet}.
\end{itemize}
\end{assumption}

Here $L^1([0,T];BV_{\mathrm{loc}})$ means
$t\mapsto\|v_t\|_{BV(K)}\in L^1(0,T)$ for every compact
$K\subset\Omega$; global $v\in L^1_tBV(\Omega)$ implies (F2), the
interior-local form being the sharp one (Section
\ref{sec:reflected}). No separate integrability hypothesis appears:
$K:=\Omega\setminus U$ is compact in $\Omega$, so
\begin{equation}\label{eq:subsumed}
\int_0^T\!\!\int_\Omega|v|\le
\int_0^T\|v_t\|_{L^1(K)}+\int_0^T\|v_t\|_{L^1(\Omega\cap U)}<\infty:
\end{equation}
global integrability is subsumed.

\begin{proof}[Proof of Theorem \ref{thm:A}]
\emph{Step 1: the zero extension satisfies (R1)--(R3).} By Lemma
\ref{lem:sheet} applied for a.e.\ $t$, together with a fixed finite
cover of $\partial\Omega$ giving a uniform trace constant,
$v^0:=v\mathbf{1}_\Omega\in
L^1\big([0,T];BV_{\mathrm{loc}}(\R^d)\big)$: (R1) holds. By (F3),
(F4) and the equivalence in Lemma \ref{lem:sheet},
\[
D\!\cdot v^0_t=(\dive v_t)\mathbf{1}_\Omega\,\Ld,\qquad
[\dive v^0]^-=[\dive v]^-\mathbf{1}_\Omega\in
L^1\big([0,T];L^\infty(\R^d)\big):
\]
(R2) holds. Finally $v^0$ is supported in the bounded set
$\bar\Omega$ and lies in $L^1([0,T]\times\R^d)$ by
\eqref{eq:subsumed}, so (R3) holds with the $L^1$ component alone.
Theorem \ref{thm:ambrosio} yields an RLF $\hat Z$ on $\R^d$, unique
up to null sets, with the stated compression constant.

\emph{Step 2: invariance of $\bar\Omega$ (elementary).} Fix $x$ with
$\hat Z(0,x)=x\in\Omega$ and $t\mapsto\hat Z(t,x)$ an integral curve
of $v^0$. The set $O:=\{t:\hat Z(t,x)\notin\bar\Omega\}$ is open; on
any component $(a,b)\subset O$ the curve lies in the open set
$\R^d\setminus\bar\Omega$, where $v^0\equiv0$, hence is constant
$=\xi\notin\bar\Omega$ there, and by continuity $\hat
Z(a,x)=\xi\notin\bar\Omega$. If $a>0$ this places $a\in O$,
contradicting that $a$ is a left endpoint of a component; so $a=0$,
contradicting $\hat Z(0,x)\in\Omega$. Hence $O=\emptyset$. Moreover
$\int_0^T\hat
Z(t,\cdot)_\#(\Ld\llcorner\Omega)(\partial\Omega)\dd t\le
Te^{\Theta_-}\Ld(\partial\Omega)=0$, so a.e.\ trajectory spends
$\LL{1}$-null time on $\partial\Omega$; since $v^0=v$
$\Ld$-a.e.\ on $\Omega$, the integral identity holds with $v$ in
place of $v^0$. Set $Z:=\hat Z|_{[0,T]\times\bar\Omega}$.
Uniqueness: any flow as in the statement, extended by the constant
flow outside $\bar\Omega$, is an RLF for $v^0$, hence coincides with
$\hat Z$ up to null sets.

\emph{Step 3: transport.} Let $\bar p_t:=p_t\mathbf{1}_\Omega$. For
$\varphi\in C_c^\infty(\R^d\times[0,T))$, using $J=pv$
a.e.\ in $\Omega$ and $v^0=v$ on $\Omega$,
\[
\int_0^T\!\!\int_{\R^d}\bar p\big(\partial_t\varphi+
v^0\cdot\nabla\varphi\big)+\int_{\R^d}\bar p_0\varphi(\cdot,0)
=\int_0^T\!\!\int_\Omega\big(p\,\partial_t\varphi+
J\cdot\nabla\varphi\big)+\int_\Omega p_0\varphi(\cdot,0)=0
\]
by (NF): the no-flux formulation with unrestricted test functions is
\emph{exactly} the statement that the zero extension of $p$ solves
the global continuity equation --- no boundary term appears because
none was ever discarded. Thus $\bar p$ is a bounded distributional
solution for $v^0$ (with $\bar pv^0=J\mathbf{1}_\Omega\in L^1$), as
is $u_t:=\hat Z(t,\cdot)_\#(\bar p_0\Ld)$ by Theorem
\ref{thm:ambrosio}(3); comparison gives $u_t=\bar p_t\Ld$ for
a.e.\ $t$, Step 2 shows $u_t$ is carried by $\bar\Omega$, and narrow
continuity of both curves upgrades the identity to every $t$.
\end{proof}

\subsection{Tangency and time
reversal}

\begin{corollary}[Tangency is automatic for positive continuous
densities]\label{cor:auto}
Assume \textup{(F0)--(F3)} and that for a.e.\ $t$, $p_t\in
C^1(\bar\Omega)$ with $\min_{\bar\Omega}p_t>0$. Then \textup{(F4)}
holds and Theorem \ref{thm:A} applies.
\end{corollary}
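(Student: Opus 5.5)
The plan is to show that for a.e.\ $t$ the normal flux $J_t\cdot\nu$ vanishes on $\partial\Omega$ in the weak (normal-trace) sense. We then divide by the trace of $p_t$, which is continuous and bounded below, to conclude that the BV trace of $v_t$ has zero normal component. Throughout we fix a time $t$ outside a null set such that all of the following hold: (F2)--(F3) hold at $t$; $p_t\in C^1(\bar\Omega)$ with $m_t:=\min_{\bar\Omega}p_t>0$; and the variational identity obtained from (NF) holds at $t$, namely
\[
\langle\partial_t p_t,\chi\rangle=\int_\Omega J_t\cdot\nabla\chi\dd x\qquad\text{for all }\chi\in C^\infty(\bar\Omega).
\]
This identity is derived exactly as in Step~1 of the proof of Proposition~\ref{prop:energyuniq}, by choosing $\varphi=\chi(x)\eta(t)$ in (NF).

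\emph{Step 1: the flux is BV near the wall.} Since $p_t\in C^1(\bar\Omega)$, the product rule for BV times $C^1$ gives $J_t=p_tv_t\in BV(\Omega\cap U)\cap BV_{\mathrm{loc}}(\Omega)$. Its interior trace on $\partial\Omega$ is $\tr J_t=p_t|_{\partial\Omega}\,\tr v_t$, because the trace of the product of a BV field with a function continuous up to the boundary is the product of the traces. By Lemma~\ref{lem:sheet} applied to $J_t$, the BV Gauss--Green formula then holds:
\[
\int_\Omega J_t\cdot\nabla\chi\dd x=-\int_\Omega\chi\,dD\!\cdot J_t+\int_{\partial\Omega}\chi\,(\tr J_t\cdot\nu)\dd\Hd .
\]

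\emph{Step 2: identifying the interior divergence.} If we take $\chi\in C_c^\infty(\Omega)$, the boundary term drops. The variational identity then shows that $-D\!\cdot J_t\llcorner\Omega=\partial_tp_t$ as distributions in $\Omega$. Substituting this back for general $\chi\in C^\infty(\bar\Omega)$, the two interior contributions cancel, and we obtain
\[
\int_{\partial\Omega}\chi\,(\tr J_t\cdot\nu)\dd\Hd=0\qquad\text{for all }\chi\in C^\infty(\bar\Omega).
\]
Here we must justify that the pairing $\langle\partial_tp_t,\chi\rangle$ coincides with integration of $\chi$ against the measure $-D\!\cdot J_t\llcorner\Omega$. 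This holds because $D\!\cdot J_t$ is a finite measure on $\Omega$, so both sides define the same functional, continuous on $C(\bar\Omega)$. Since traces of $C^\infty(\bar\Omega)$ functions are dense in $C(\partial\Omega)$, it follows that $\tr J_t\cdot\nu=0$ $\Hd$-a.e.

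\emph{Step 3: dividing by the density.} Since $p_t\ge m_t>0$ on $\partial\Omega$, the identity $\tr J_t\cdot\nu=p_t\,(\tr v_t\cdot\nu)$ gives $\tr v_t\cdot\nu=0$ $\Hd$-a.e. This is (F4), and Theorem~\ref{thm:A} then applies.

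The main obstacle is Step~2. The issue is to make rigorous the passage from (NF) to a statement at a fixed time $t$: the identity above must hold for a.e.\ $t$ simultaneously for all $\chi$, which requires a countable dense family of test functions. One must also ensure that $\partial_tp_t$ is a finite measure, i.e.\ that it has no part concentrated at the boundary, so that it can be identified with the interior divergence. The expected route is to use that $J_t\in BV$ near the wall, so that $D\!\cdot J_t\llcorner\Omega$ is finite and Radon. Any boundary mass would appear only through the trace term, and the no-flux identity with unrestricted $\chi$ forces that term to vanish.
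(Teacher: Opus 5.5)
Your Steps 1 and 3 agree with the paper: the product trace $\tr J_t=p_t|_{\partial\Omega}\tr v_t$, then division by $\min p_t>0$. Step 2, however, has a genuine gap, and it sits exactly where the content of the corollary lies. At a fixed time $t$, all you know about the functional $\chi\mapsto\langle\partial_tp_t,\chi\rangle$ on $C^\infty(\bar\Omega)$ is that it equals $\int_\Omega J_t\cdot\nabla\chi\dd x$. By your own Gauss--Green formula this is $-\int_\Omega\chi\,dD\!\cdot J_t+\int_{\partial\Omega}\chi\,(\tr J_t\cdot\nu)\dd\Hd$. Testing with $\chi\in C_c^\infty(\Omega)$ identifies it with $-D\!\cdot J_t\llcorner\Omega$ only on $C_c^\infty(\Omega)$. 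Your reason for extending the identification to $C^\infty(\bar\Omega)$ (``both sides define the same functional, continuous on $C(\bar\Omega)$'') fails. $C_c^\infty(\Omega)$ is not dense in $C(\bar\Omega)$; its closure is $C_0(\Omega)$. Two finite measures on $\bar\Omega$ that agree on $C_c(\Omega)$ can therefore differ by any measure carried by $\partial\Omega$, here precisely $(\tr J_t\cdot\nu)\Hd\llcorner\partial\Omega$. Indeed, the same reasoning applied to $\chi\mapsto\int_\Omega J\cdot\nabla\chi$ for an arbitrary $BV$ field $J$ would show that every $BV$ field has vanishing normal trace. Your closing remark that ``the no-flux identity with unrestricted $\chi$ forces that term to vanish'' is circular for the same reason. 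At fixed $t$ that identity only defines the pairing; it says nothing about whether $\partial_tp_t$ charges $\partial\Omega$.

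The missing idea is that $\partial_tp$ cannot charge $\partial\Omega$ because it is the time derivative of the absolutely continuous measures $p_t\Ld$. This must be exploited before a time is fixed, through the term $\int_0^T\!\int_\Omega p\,\partial_t\varphi$, where no spatial derivative falls on the test function. For instance, take cutoffs $\theta_k\in C_c^\infty(\Omega)$ with $0\le\theta_k\le1$, $\theta_k\to\mathbf{1}_\Omega$ pointwise, and $\nabla\theta_k$ supported in a boundary layer of width $O(1/k)$ with $|\nabla\theta_k|=O(k)$. Subtract (NF) for $\varphi=\chi\theta_k\eta$ from (NF) for $\varphi=\chi\eta$, with $\eta\in C_c^\infty(0,T)$; this leaves
\[
\int_0^T\!\!\int_\Omega\big(p\,\eta'\chi+\eta\,J\cdot\nabla\chi\big)(1-\theta_k)\dd x\dd t-\int_0^T\!\!\int_\Omega\eta\,\chi\,J\cdot\nabla\theta_k\dd x\dd t=0 .
\]
The first integral tends to $0$ by dominated convergence, since $p,J\in L^1$. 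The second tends to $-\int_0^T\eta\int_{\partial\Omega}\chi\,(\tr J_t\cdot\nu)\dd\Hd\dd t$ by the $BV$ trace theorem. Domination in $t$ comes from $|J_t|\le\|p\|_{L^\infty}|v_t|$, the layer estimate $k\int_{\{\operatorname{dist}(\cdot,\partial\Omega)<1/k\}}|v_t|\lesssim\|v_t\|_{BV(\Omega\cap U)}$, and $v\in L^1_tBV(\Omega\cap U)$. Hence $\int_0^T\eta\int_{\partial\Omega}\chi\,\tr J_t\cdot\nu\dd\Hd\dd t=0$ for all $\chi$ and $\eta$. Only after this does one pass to a.e.\ $t$ through a countable dense family of test functions. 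This is, in compressed form, what the paper does: it tests (NF) with the space--time function $\chi\psi\eta$ and subtracts the interior identity in $\mathcal{D}'(\Omega\times(0,T))$. The boundary identity thus comes out time-integrated first, and the pointwise-in-$t$ statement comes last.
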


\begin{proof}
For $\psi\in C^1(\bar\Omega)$ write $\psi=\chi\psi+(1-\chi)\psi$ with
$\chi\in C_c^\infty(U)$, $\chi\equiv1$ near $\partial\Omega$; the
$(1-\chi)$-part is an interior identity requiring no trace. For the
$\chi$-part: for a.e.\ $t$, $J_t=p_tv_t\in BV(\Omega\cap U)$ with
$\tr J_t=p_t|_{\partial\Omega}\tr v_t$ $\Hd$-a.e. Testing (NF) with
$\varphi=\chi\psi(x)\eta(t)$, $\eta\in C_c^\infty(0,T)$, and
subtracting the interior identity $\partial_tp=-\dive J$ in
$\mathcal{D}'(\Omega\times(0,T))$ via the Gauss--Green formula for
BV fields on $\Omega\cap U$ \cite{Anzellotti,ChenFrid} --- only the
$\partial\Omega$ portion of the boundary contributing, $\chi\psi$
vanishing on the rest --- yields
$\int_0^T\eta\int_{\partial\Omega}\chi\psi\,\tr
J_t\cdot\nu\,d\Hd\dd t=0$ for all $\psi,\eta$; hence $\tr
J_t\cdot\nu=0$ $\Hd$-a.e.\ for a.e.\ $t$, and dividing by
$p_t|_{\partial\Omega}>0$ gives (F4).
\end{proof}

\begin{remark}[Sharpness of the assumptions]\label{rem:sharp}
(a) Within the zero-extension strategy the collar $BV$ in (F2)
cannot be reduced to boundedness plus a divergence-measure condition:
the Anzellotti--Chen--Frid normal trace \cite{Anzellotti,ChenFrid}
would still make (F4) meaningful, but (R1) fails near
$\partial\Omega$, and renormalisation is required to operate
\emph{across} $\partial\Omega$, where for merely bounded
divergence-free fields uniqueness fails \cite{Depauw}. Sobolev
regularity $W^{1,1}(\Omega\cap U)$ serves equally well. This is a
limitation of the method, however, and not of the problem: as
discussed in Remark \ref{rem:CDIN} below, the intrinsic theory on
$\Omega$ developed by Crippa, De Rosa, Inversi and Nesi
\cite{CDRIN} dispenses with boundary $BV$ regularity precisely on
the exiting and tangent portions of $\partial\Omega$, which is where
the present hypotheses place us. (b) The
time-integrability in the interior part of (F2) is essential: in
the sharpness example of Section \ref{sec:reflected} the velocity is
real-analytic in $\Omega$ for every $t>0$, and it is precisely
$\int_0^T\|v_t\|_{BV(K)}\dd t$ that diverges. (c) The boundary
hypotheses \textup{(F2)--(F4)} cannot be relaxed jointly so as to
admit a boundary current: Theorem \ref{thm:B}. Whether tangency
alone is indispensable is a subtler question; see Remark
\ref{rem:whatitshows}.
\end{remark}

\begin{remark}[Normal traces and
the intrinsic theory on $\Omega$]\label{rem:CDIN}
Various notions of normal trace have been studied in the literature:
the distributional one (Anzellotti \cite{Anzellotti}, Chen--Frid
\cite{ChenFrid}) for measure-divergence fields; the
\emph{normal Lebesgue trace} $v^{\partial\Omega}_n$ introduced by De
Rosa and Inversi \cite{DeRosaInversi}, which requires the boundary
value to be attained in an averaged Lebesgue sense on interior
balls; and the strong $BV$ trace of  Ambrosio et al. \cite[Thm.~3.87]{AFP}, used in
Lemma \ref{lem:sheet}. Crippa et al. \cite{CDRIN} prove that for bounded
measure-divergence fields the normal Lebesgue trace, when it exists,
coincides with the distributional one and satisfies the Gauss--Green
identity, and that it is a notion lying \emph{strictly} between the
other two. Since a $BV$ field attains its normal Lebesgue trace, and
that trace is $v^\Omega\cdot\nu$ \cite[Prop.~5.5]{DeRosaInversi},
our hypothesis \textup{(F4)} implies
$v^{\partial\Omega}_n\equiv0$: tangency as used here is the
strongest of the three, and \emph{Lebesgue tangency} is its natural
weakening.

Crippa et al. \cite{CDRIN} recently established uniqueness for
continuity equations on bounded domains without the global
boundary $BV$ assumption of \cite{CDS} exactly on the portion of
$\partial\Omega$ where characteristics exit or are tangent. In their
notation, tangency forces $\Gamma^-=\emptyset$ --- no characteristic
enters --- so their $BV$-near-$\Gamma^-$ hypothesis is vacuous,
while their uniformly-exiting condition follows from vanishing
normal Lebesgue trace \cite[Cor.~3.9]{CDRIN}. Their
\cite[Cor.~4.6]{CDRIN}, for divergence-free fields tangent to
$\partial\Omega$ in the Lebesgue sense, gives uniqueness under
interior $BV_{\mathrm{loc}}(\Omega)$ regularity alone. This is
strong evidence that the collar hypothesis in \textup{(F2)} is
removable for the Eulerian portion of Theorem \ref{thm:A}; what does
not follow automatically is the Lagrangian result, since our flow,
invariance and transport statements are all obtained from the zero
extension on $\R^d$ rather than intrinsically on $\Omega$.  Two hypotheses are not comparable,
though: \cite{CDRIN} work with $v\in L^\infty$ and $\dive v\in
L^1_tL^\infty_x$ two-sided, whereas \textup{(F2)--(F3)} allow
unbounded $BV$ fields and only a one-sided divergence bound.
Conversely, the counterexample of Theorem \ref{thm:B} is unbounded
near the wall and therefore lies outside the framework of
\cite{CDRIN} as well; it contradicts none of their uniqueness
results.

Crippa et al. \cite{CDRIN} construct a bounded divergence-free field whose
normal Lebesgue trace exists and equals $-1$ --- characteristics
entering everywhere --- for which uniqueness fails. Tangency
excludes exactly this configuration, which is why the present
setting falls on the favourable side of their dichotomy.
\end{remark}

\begin{remark}[Time reversal]\label{rem:timerev}
If in addition $[\dive v]^+\in L^1([0,T];L^\infty(\Omega))$, the
two-sided compression bound and essential invertibility of
$Z(t,\cdot)$ follow as in the whole space; tangency is invariant
under $v\mapsto-v$, so time reversal requires no new boundary
hypothesis, and the deterministic reverse flow transports $p_T$ onto
$p_0$ under the same conditions read backwards.
\end{remark}

%=====================================================================
\section{The boundary current}\label{sec:mainB}
%=====================================================================

We now prove Theorem \ref{thm:B}. The construction is guided by the
rigidity results of Section \ref{sec:rigidity}, which show (Lemma
\ref{lem:taylor} below) that tangency is automatic wherever $v$ is
bounded near the wall. Any example in which tangency fails must
therefore carry a \emph{boundary current}: a tangential flux of
nonvanishing linear density
$g(x_1,t):=J_1(x_1,0,t)$ along a wall where $p$ vanishes linearly,
with
\begin{equation}\label{eq:bcstructure}
v_1\approx\frac{g(x_1,t)}{c(x_1,t)\,x_2},\qquad
v_2\big|_{x_2=0}=-\frac{\partial_1g(x_1,t)}{c(x_1,t)}
\qquad (p\approx c(x_1,t)\,x_2) .
\end{equation}

\subsection{Construction of the boundary current}

Let $\Omega:=\T\times(0,1)$. Fix $\lambda>0$, set $\xi:=x_1-t$,
$m(s):=1+\tfrac12\cos s\in[\tfrac12,\tfrac32]$, and define
\begin{equation}\label{eq:bce}
p(x,t):=\tfrac14\sin(\pi x_2)\,m(\xi),\qquad
J:=p\,e_1+\nabla^\perp\psi,\qquad
\psi(x,t):=\tfrac{\lambda}{\pi}\sin(\pi x_2)\cos\xi,
\end{equation}
where $\nabla^\perp\psi:=(\partial_2\psi,-\partial_1\psi)=
\big(\lambda\cos(\pi x_2)\cos\xi,\ \tfrac{\lambda}{\pi}\sin(\pi
x_2)\sin\xi\big)$. Then $p\in C^\infty(\bar\Omega\times[0,T])$ is
positive in $\Omega$, vanishes linearly on both walls, and
$\int_\Omega p_t=\tfrac14\cdot2\pi\cdot\tfrac2\pi=1$. The continuity
equation holds classically ($\partial_tp=-\partial_1p$ while
$\dive J=\partial_1p+\dive\nabla^\perp\psi=\partial_1p$), and the
no-flux condition holds pointwise
($J_2=\tfrac{\lambda}{\pi}\sin(\pi x_2)\sin\xi$ vanishes at
$x_2\in\{0,1\}$). The velocity is
\begin{equation}\label{eq:bcv}
v_1=1+\frac{4\lambda\cos(\pi x_2)\cos\xi}{\sin(\pi x_2)\,m(\xi)},
\qquad
v_2=\frac{4\lambda\sin\xi}{\pi\,m(\xi)} .
\end{equation}
Both components are $C^\infty$ in $\Omega$; $v_2$ extends smoothly to
$\bar\Omega$ with trace $\tfrac{4\lambda\sin(x_1-t)}{\pi m(x_1-t)}$
on $\{x_2=0\}$, nonzero for a.e.\ $(x_1,t)$ and \emph{outward} on the
moving half-wall $\{\sin(x_1-t)<0\}$. The tangential component
realises the boundary current,
$v_1\sim\tfrac{4\lambda\cos\xi}{\pi m\,x_2}$, matching
\eqref{eq:bcstructure}; consequently $v\notin L^1$ near either wall
and $[\dive v]^-$ is unbounded there. This proves part (ii) of
Theorem \ref{thm:B}: tangency fails classically, and hypotheses
\textup{(F2)--(F3)} fail on the collar as well --- consistently with
Theorem \ref{thm:A}, which would otherwise apply and contradict part
(iv).

It should be emphasised that Lemma \ref{lem:sheet} is
\emph{not} available here, and that the failure of tangency cannot
be expressed through it. That lemma presupposes $v\in
BV(\Omega\cap U)$, hence $v\in L^1$ on a collar; since $v_1\sim
C/x_2$, the zero extension $v\mathbf{1}_\Omega$ is not locally
integrable on $\R^d$, does not define a distribution, and has no
distributional divergence --- so there is no ``boundary sheet''
$-(\tr v\cdot\nu)\Hd\llcorner\partial\Omega$ associated with this
field in the sense of \eqref{eq:sheet}. What is available is the
classical trace of $v\cdot\nu$, which exists because $v_2$ happens
to extend smoothly, and the following exhaustion statement, which is
the precise form of the analogy. For $\eta>0$ let
$\Omega_\eta:=\T\times(\eta,1-\eta)$. On $\bar\Omega_\eta$ the field
$v$ is smooth and bounded with bounded derivatives, so Lemma
\ref{lem:sheet} applies to $\Omega_\eta$ and gives
\[
D\!\cdot\big(v\mathbf{1}_{\Omega_\eta}\big)
=(\dive v)\,\LL{2}\llcorner\Omega_\eta
\;-\;\big(v\cdot\nu_\eta\big)\,
\mathcal{H}^{1}\llcorner\partial\Omega_\eta ,
\]
whose singular part on the lower face $\{x_2=\eta\}$ equals
$\tfrac{4\lambda\sin\xi}{\pi m(\xi)}\mathcal{H}^1$, independently of
$\eta$. These singular parts do not vanish as $\eta\downarrow0$;
they converge weakly-$*$ to the nonzero measure
$\tfrac{4\lambda\sin\xi}{\pi m(\xi)}\,\mathcal{H}^1\llcorner\{x_2=0\}$.
The example therefore exhibits, on every interior surface parallel
to the wall, exactly the singular divergence contribution that
tangency excludes --- while the limiting object is not itself the
divergence of any extension of $v$.
\subsection{Weighted uniqueness for no-flux Fokker--Planck equations
with entrance-type singular drifts}\label{app:weighted}
%=====================================================================

In this section we establish a uniqueness result 
for no-flux
Fokker--Planck equations whose drift has an entrance-type
singularity at the boundary, a regime where $f\notin L^\infty$ and
$[\dive f]^-\notin L^\infty$, so that neither the duality argument
of Proposition \ref{prop:energyuniq} nor the unweighted energy
method of Remark \ref{rem:duality} apply.

We will use this result in the sequel (
Corollary \ref{cor:bcuniq}), but as it is of independent interest we formulate in more general form.
\begin{assumption}[W]\label{ass:W}
$\Omega$ is a bounded domain with $C^{1,1}$ boundary, or the
periodic strip $\T\times(0,1)$ (or annulus). The pair
$p\in C^1(\bar\Omega\times[0,T])$,
$J\in C^1(\bar\Omega\times[0,T];\R^d)$ satisfies, classically,
\[
\partial_tp+\dive J=0\ \text{ in }\bar\Omega\times[0,T],\qquad
J\cdot\nu=0\ \text{ on }\partial\Omega\times[0,T],
\]
with $p>0$ in $\Omega\times[0,T]$ and
$p(x,t)\le C_0\operatorname{dist}(x,\partial\Omega)$. Set $v:=J/p$,
$\sigma\equiv1$, and $f:=v+\tfrac12\nabla\log p$ on $\Omega$, so
that $(p,J)$ solves the no-flux Fokker--Planck problem with
coefficients $(f,\sigma)$, by the identity $fp-\tfrac12\nabla p=J$.
\end{assumption}
No lower bound  $p\ge c\operatorname{dist}$ is required.

\begin{definition}[Relative energy class]\label{def:relclass}
\[
\mathcal{C}_p:=\Big\{q\ge0\ :\ h:=q/p\in
L^\infty\big((0,T)\times\Omega\big),\ \
\int_0^T\!\!\int_\Omega p\,|\nabla h|^2\dd x\dd t<\infty\Big\},
\]
and $q\in\mathcal{C}_p$ is a no-flux weak solution with initial condition $q_0$
if \eqref{eq:NFintro} holds with the flux
\[
J_q:=fq-\tfrac12\nabla q,
\]
this being the flux of the Fokker--Planck equation written as a
conservation law, $\partial_tq+\dive J_q=0$, and hence the field
entering \eqref{eq:NFintro}.
\end{definition}

Two remarks make this meaningful. First, in terms of $h$ the flux is
\begin{equation}\label{eq:fluxid}
J_q=\big(v+\tfrac12\nabla\log p\big)hp-\tfrac12\nabla(hp)
=h\,J-\tfrac12\,p\,\nabla h,
\end{equation}
consistently with $J_p=J$ for $h\equiv1$,
in which every singular object is absorbed:
$p\nabla h\in L^2((0,T)\times\Omega)$ since
$\int|p\nabla h|^2\le\|p\|_\infty\int p|\nabla h|^2$, and
$hJ\in L^\infty$; all terms of the weak formulation are finite even
though $f\notin L^2(\Omega)$. Second,
$p\in\mathcal{C}_p$ (with $h\equiv1$) and is a no-flux weak solution
with initial condition $p_0$, by (W) and the classical divergence theorem.

\begin{theorem}[Weighted uniqueness]\label{thm:weighted}
Under Assumption \textup{(W)}, for every initial condition there is at
most one no-flux weak solution in $\mathcal{C}_p$. In particular,
$(p_t)$ is the unique no-flux weak solution in $\mathcal{C}_p$ with
initial condition $p_0$.
\end{theorem}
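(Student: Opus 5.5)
The plan is a weighted energy estimate on the ratio. Let $q^1,q^2\in\mathcal{C}_p$ be two no-flux weak solutions with the same initial condition $q_0$. Put $h^i:=q^i/p$ and $k:=h^1-h^2\in L^\infty$, so that $\int_0^T\!\int_\Omega p|\nabla k|^2<\infty$. By \eqref{eq:fluxid} the difference $w:=q^1-q^2=kp$ has flux $J_w=kJ-\tfrac12 p\nabla k$. Differencing (NF) gives, for all $\varphi\in C^\infty(\bar\Omega\times[0,T))$,
\[
\int_0^T\!\!\int_\Omega\big(kp\,\partial_t\varphi+(kJ-\tfrac12p\nabla k)\cdot\nabla\varphi\big)=0 .
\]
The goal is to test this formally with $\varphi=k$ and show that $\frac{d}{dt}\tfrac12\int pk^2=-\tfrac12\int p|\nabla k|^2$.

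To see where this comes from, use that $p$ is classical, with $\partial_tp=-\dive J$. Then $\partial_t(kp)=p\partial_tk-k\dive J$. Testing with $k$ gives two contributions. The first is
\[
\int (p\,\partial_tk)k=\tfrac{d}{dt}\tfrac12\int pk^2+\tfrac12\int k^2\dive J .
\]
The second is $\int kJ\cdot\nabla k=\tfrac12\int J\cdot\nabla(k^2)=-\tfrac12\int k^2\dive J$, using $J\cdot\nu=0$ on $\partial\Omega$. So the transport terms cancel exactly against the time derivative of the weight, as announced in the introduction, and what remains is
\[
\tfrac{d}{dt}\tfrac12\int pk^2+\tfrac12\int p|\nabla k|^2=0 .
\]
Since $k(0)=0$, this forces $k\equiv0$.

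To make this rigorous I would rewrite the weak equation as an equation for $k$ with the smooth weight $p$. I take $\varphi=p\,\zeta$ with $\zeta$ smooth, which is admissible because $p\in C^1(\bar\Omega\times[0,T])$, or I mollify. Expanding with $\partial_tp=-\dive J$ and integrating by parts the terms involving $p$ only, which is classical, yields a weak formulation of
\[
p\,\partial_tk+J\cdot\nabla k-\tfrac12\dive(p\nabla k)=0
\]
with no boundary terms. This needs two inputs. First, the term $\int \zeta k\,\dive J$ has to be handled via $\int kJ\cdot\nabla(p\zeta)$; everything involved is bounded because $k\in L^\infty$ and $J,p\in C^1$. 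Second, the boundary term $\int_{\partial\Omega}\zeta kJ\cdot\nu$ vanishes classically. I would then regularise $k$ in time by Steklov averages, and in space by mollification in tangential directions plus interior cutoffs, and test with the regularised $k$. The space $H_p:=\{u:\int p|\nabla u|^2<\infty\}$ is a weighted Sobolev space with weight comparable to at most $\operatorname{dist}$. A weighted Lions--Magenes lemma in $L^2(p)$ then gives absolute continuity of $t\mapsto\int p k^2$ and the energy identity, with $k(0)=0$ coming from the initial condition in (NF).

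The main obstacle is the boundary. Test functions of the form $k\zeta_\eta$, with $\zeta_\eta$ a cutoff at distance $\eta$ from $\partial\Omega$, produce commutator errors. These are
\[
\int kJ\cdot\nabla\zeta_\eta\,k \quad\text{and}\quad \tfrac12\int p\,k\nabla k\cdot\nabla\zeta_\eta .
\]
For the first I would use $|J\cdot\nabla\zeta_\eta|\lesssim \eta^{-1}|J\cdot\nu|\lesssim 1$ on the collar of width $\eta$, since $J\cdot\nu=0$ on $\partial\Omega$ and $J\in C^1$ give $|J\cdot\nu|\le C\operatorname{dist}$ (this needs a $C^{1,1}$ normal field). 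Together with $k\in L^\infty$, this error is bounded by $C\,|\{\operatorname{dist}<\eta\}|\to0$. For the second, $p\le C_0\operatorname{dist}$ gives $p|\nabla\zeta_\eta|\lesssim C_0$ on the collar. Cauchy--Schwarz then bounds it by
\[
\Big(\int_{\mathrm{collar}}p|\nabla k|^2\Big)^{1/2}\Big(\int_{\mathrm{collar}}p^{-1}p^2|\nabla\zeta_\eta|^2k^2\Big)^{1/2}\lesssim o(1)\cdot\big(C\eta\cdot\eta^{-1}\cdot\eta\big)^{1/2}\to0 .
\]
So the hypothesis $p\le C_0\operatorname{dist}$ is exactly what makes the weighted energy insensitive to the boundary. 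I expect the careful verification of these limits, jointly with the time regularisation, to be the only delicate point.
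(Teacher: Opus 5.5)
Your proposal is correct and is essentially the paper's proof: a cutoff weighted energy $\int p\,w^2\chi_\delta$ for the ratio $w=(q^1-q^2)/p$, the exact cancellation of $2\int wJ\cdot\nabla w\,\chi_\delta$ against $-\int\partial_t p\,w^2\chi_\delta$ into the single layer term $-\int (J\cdot\nabla\chi_\delta)\,w^2$, and the layer errors removed using $|J\cdot\nabla\rho|\lesssim\rho$ (from no-flux) and $p\lesssim\rho$ with Cauchy--Schwarz. The paper's implementation is lighter than your Steklov/weighted Lions--Magenes plan, since $\chi_\delta$ is supported where $p$ is bounded below and the ordinary Lions--Magenes lemma on $\Omega_\delta$ suffices. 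Also, the second Cauchy--Schwarz factor is $O(1)$ rather than $O(\eta^{1/2})$, which is harmless because the first factor is $o(1)$.
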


The proof rests on the observation that testing the equation
for the ratio $h=q/p$ against $h$ in the weighted space
$L^2(p\dd x)$ produces transport terms that cancel \emph{exactly}
against the time derivative of the weight, because $p$ solves the
same continuity equation. The singular drift never appears, and
neither does its divergence.

\begin{proof}
Let $q^1,q^2\in\mathcal{C}_p$ solve with the same initial condition; set
$u:=q^1-q^2$, $w:=u/p$, $M:=\|w\|_\infty$. Subtracting the weak
formulations and using \eqref{eq:fluxid},
\begin{equation}\label{eq:diffweak}
\int_0^T\!\!\int_\Omega p\,w\,\partial_t\varphi
=\int_0^T\!\!\int_\Omega\Big(\tfrac12\,p\nabla w-
wJ\Big)\cdot\nabla\varphi
\qquad\forall\,\varphi\in C^\infty(\bar\Omega\times[0,T)).
\end{equation}
\emph{Step 1: localisation.} Write
$\rho(x):=\operatorname{dist}(x,\partial\Omega)$, which is $C^{1,1}$
in a collar $\{\rho<\rho_0\}$ with $\nabla\rho=-\nu$ on
$\partial\Omega$. For $\delta\in(0,\rho_0/4)$ pick
$\chi_\delta:=\theta_\delta\circ\rho$ with
$\theta_\delta\in C^\infty([0,\infty);[0,1])$, $\theta_\delta=0$ on
$[0,\delta]$, $\theta_\delta=1$ on $[2\delta,\infty)$,
$|\theta'_\delta|\le C\delta^{-1}$ (on the strip take
$\chi_\delta=\chi_\delta(x_2)$ directly). Set
$\Omega_\delta:=\{\rho>\delta\}$, on whose closure $p$ is bounded
above and below by positive constants uniformly in $t$. Then
$q^i\in\mathcal{C}_p$ gives
$u\in L^2((0,T);H^1(\Omega_\delta))$, and by \eqref{eq:diffweak} ---
whose right-hand side is a bounded functional of
$\nabla\varphi\in L^2$ --- also $\partial_tu\in
L^2((0,T);H^{-1}(\Omega_\delta))$. By density (both sides of
\eqref{eq:diffweak} being continuous in the relevant norms, using
$p\nabla w\in L^2$ and $wJ\in L^\infty$), \eqref{eq:diffweak}
extends to test functions in $L^2((0,T);H^1_0(\Omega_\delta))$ with
square-integrable time derivative vanishing at $T$; and the standard
theory (Lions--Magenes duality \cite{Temam}) yields
$u\in C([0,T];L^2(\Omega_\delta))$ with $u(0)=0$, the initial data
being equal.

\emph{Step 2: weighted energy identity.} Let
$\zeta:=\chi_\delta/p\in C^1(\bar\Omega_\delta\times[0,T])$,
bounded with bounded time derivative there. The chain rule for
$\langle\partial_tu,\zeta u\rangle$ (legitimate since
$\zeta u=w\chi_\delta\in L^2((0,T);H^1_0(\Omega_\delta))$) gives,
for a.e.\ $t$,
\[
\frac{d}{dt}\int_\Omega p\,w^2\chi_\delta
=2\big\langle\partial_tu,\,w\chi_\delta\big\rangle
-\int_\Omega\partial_tp\;w^2\chi_\delta .
\]
Substituting \eqref{eq:diffweak} with test function $w\chi_\delta$
and expanding
$\nabla(w\chi_\delta)=\chi_\delta\nabla w+w\nabla\chi_\delta$,
\[
2\big\langle\partial_tu,w\chi_\delta\big\rangle
=-\int_\Omega p|\nabla w|^2\chi_\delta
+2\int_\Omega wJ\cdot\nabla w\,\chi_\delta+E_\delta(t),
\quad
E_\delta:=-\!\int_\Omega p(\nabla w\cdot\nabla\chi_\delta)w
+2\!\int_\Omega w^2J\cdot\nabla\chi_\delta .
\]

\emph{Step 3: exact cancellation.} On
$\operatorname{supp}\chi_\delta$ the function $w$ lies in
$H^1\cap L^\infty$, so $w^2\in W^{1,1}$ with
$\nabla(w^2)=2w\nabla w$; $J$ is $C^1$; integrating by parts against
$\chi_\delta\in C_c^\infty$ and using $\partial_tp=-\dive J$
classically,
\[
2\int_\Omega wJ\cdot\nabla w\,\chi_\delta
-\int_\Omega\partial_tp\,w^2\chi_\delta
=\int_\Omega\dive\!\big(J\,w^2\big)\chi_\delta
=-\int_\Omega(J\cdot\nabla\chi_\delta)\,w^2 .
\]
This is the heart of the proof: the transport term produced by the
flux and the term produced by differentiating the weight combine
into a pure divergence, which sees only the component
$J\cdot\nabla\chi_\delta\propto J\cdot\nabla\rho$ on the cutoff
layer. The individually divergent quantities (note
$\int|J|^2/p=\infty$ in the example of Section \ref{sec:mainB}) are
never separated. Collecting the terms we obtain
\begin{equation}\label{eq:enid}
\frac{d}{dt}\int_\Omega p\,w^2\chi_\delta
=-\int_\Omega p|\nabla w|^2\chi_\delta+\widetilde E_\delta(t),
\qquad
\widetilde E_\delta:=E_\delta-\int_\Omega
(J\cdot\nabla\chi_\delta)\,w^2 .
\end{equation}

\emph{Step 4: the layer errors vanish.} Let
$L_\delta:=\{\delta<\rho<2\delta\}\supset
\operatorname{supp}\nabla\chi_\delta$, of measure $\le C\delta$.
Three facts are used: $p\le2C_0\delta$ on $L_\delta$ (by assumption);
$|J\cdot\nabla\rho|\le C_1\rho\le2C_1\delta$ on $L_\delta$, which
follows from $J\in C^1(\bar\Omega)$, $\rho\in C^{1,1}$, and
$J\cdot\nabla\rho=-J\cdot\nu=0$ on $\partial\Omega$ --- the no-flux
condition doing its work; and, decisively, that
$\nabla\chi_\delta$ is parallel to $\nabla\rho$, so the cutoff never
meets the \emph{tangential} component of $J$, the carrier of any
boundary current. Then, integrating in time,
\[
\Big|\int_0^T\!\!\int w^2(J\cdot\nabla\chi_\delta)\Big|
\le M^2\frac{C}{\delta}\cdot2C_1\delta\cdot T|L_\delta|
\le C'M^2T\,\delta\xrightarrow[\delta\downarrow0]{}0,
\]
and by Cauchy--Schwarz with weight $p$,
\[
\Big|\int_0^T\!\!\int p(\nabla w\cdot\nabla\chi_\delta)w\Big|
\le\frac{CM}{\delta}
\Big(\int_0^T\!\!\int_{L_\delta}p|\nabla w|^2\Big)^{1/2}
\Big(T|L_\delta|\sup_{L_\delta}p\Big)^{1/2}
\le C''M\sqrt{T}\,\varepsilon_\delta,
\]
where $\varepsilon_\delta:=(\int_0^T\!\int_{L_\delta}p|\nabla
w|^2)^{1/2}\to0$ as the tail of a convergent integral. The scaling
deserves emphasis: the factor $\delta^{-1}$ from the cutoff is
beaten by $(\int_{L_\delta}p)^{1/2}\lesssim\delta$, which is
precisely where $p\lesssim\operatorname{dist}$ is used: the
weight vanishes fast enough at the wall that the boundary is
invisible to the energy.

Discarding the negative dissipation in
\eqref{eq:enid} and integrating from $0$ to $t$ with $u(0)=0$,
\[
\int_\Omega p_t\,w_t^2\,\chi_\delta\le
C\big(M\sqrt{T}\,\varepsilon_\delta+M^2T\,\delta\big)
\quad\text{for a.e.\ }t .
\]
Letting $\delta\downarrow0$: monotone convergence on the left, zero
on the right; hence $w_t=0$ $p_t$-a.e., and since $p_t>0$ in
$\Omega$, $q^1=q^2$ a.e.
\end{proof}

\begin{remark}[Interpretation]\label{rem:chisq}
From a probabilistic perspective, $h=q/p$ is a Doob $h$-transform ratio, and the
identity of Steps 2--3 is the decay of the $\chi^2$-divergence
$\int(q/p-1)^2p\dd x$ with dissipation the relative Dirichlet energy
$\int p|\nabla(q/p)|^2$; the class $\mathcal{C}_p$ is the natural
relative analogue of the energy class $\mathcal{X}_\Omega$ of
Proposition \ref{prop:energyuniq}. A variant of the same computation
applied to $(w-k)_+^2$ yields the comparison principle
$q_0\le kp_0\Rightarrow q_t\le kp_t$. The domination $q\le Cp$
forbids competing solutions from carrying mass toward the walls
faster than $p$ does; uniqueness in the unweighted class remains
open (Section \ref{sec:open}).
\end{remark}

\subsection{Eulerian perspective: uniqueness of the density evolution}

Setting $\sigma\equiv1$ and $f:=v+\tfrac12\nabla\log p$, the pair
$(p,J)$ satisfies, identically, the no-flux Fokker--Planck problem
with coefficients $(f,\sigma)$, by the identity
$fp-\tfrac12\nabla p=J$. Near the walls, $f$ has the Bessel-type
entrance singularity $f_2\approx\tfrac{1}{2x_2}$ (the associated
diffusion never reaches the wall and no local time appears), and
$f\notin L^\infty(\Omega\times(0,T))$, while
$[\dive f]^-\asymp\operatorname{dist}(x,\partial\Omega)^{-2}$ is
unbounded: neither Proposition \ref{prop:energyuniq} nor its energy
counterpart of Remark \ref{rem:duality} is applicable. The
Assumption (W) of Section  \ref{app:weighted} holds by inspection
($p\le\tfrac{3\pi}{8}\operatorname{dist}(x,\partial\Omega)$),
whence:
\begin{corollary}\label{cor:bcuniq} The density flow
$(p_t)$ of \eqref{eq:bce} is the unique no-flux weak solution for the  Fokker--Planck equation with initial condition $p_0$,  in the 
relative energy class $\mathcal{C}_p$ of Theorem \ref{thm:weighted}.
\end{corollary}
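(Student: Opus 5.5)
The corollary will follow from Theorem \ref{thm:weighted} once Assumption (W) is checked for the explicit pair \eqref{eq:bce} on $\Omega=\T\times(0,1)$. By the theorem, $p$ is then the unique no-flux weak solution in $\mathcal{C}_p$ with initial condition $p_0$, for the Fokker--Planck equation with $\sigma\equiv1$ and $f=v+\tfrac12\nabla\log p$.

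I will verify the items of (W) in order.

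\emph{Domain.} The periodic strip is explicitly allowed. The cutoff there is $\chi_\delta(x_2)$, and the distance to the boundary is $\rho=\min(x_2,1-x_2)$.

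\emph{Regularity.} Both $p=\tfrac14\sin(\pi x_2)m(x_1-t)$ and $J=p\,e_1+\nabla^\perp\psi$ are trigonometric polynomials in $(x_2,\xi)$. Hence they belong to $C^\infty(\bar\Omega\times[0,T])$, and in particular to $C^1$.

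\emph{Continuity equation.} We have $\partial_t p=-\partial_1 p$. Also $\dive J=\partial_1p$, since $\nabla^\perp\psi$ is divergence free. So the equation holds classically up to the boundary.

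\emph{No-flux condition.} The normal component $J_2=\tfrac{\lambda}{\pi}\sin(\pi x_2)\sin\xi$ vanishes at $x_2\in\{0,1\}$.

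\emph{Positivity.} In $\Omega$, $\sin(\pi x_2)>0$ and $m\ge\tfrac12$, so $p>0$.

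\emph{Linear upper bound.} Using $\sin(\pi x_2)=\sin(\pi\rho)\le\pi\rho$ and $m\le\tfrac32$, we get $p\le\tfrac{3\pi}{8}\rho$. So $C_0=\tfrac{3\pi}{8}$ works.

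The identity $fp-\tfrac12\nabla p=J$ holds by definition of $f$. Hence $(p,J)$ is the no-flux Fokker--Planck pair with coefficients $(f,1)$, and the membership $p\in\mathcal{C}_p$ is already noted after Definition \ref{def:relclass}.

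The one point needing care is whether the proof of Theorem \ref{thm:weighted} used anything beyond (W) that might fail for this field, given that $f$ and $v$ are singular at the walls. Step 4 of that proof used two bounds on the layer $L_\delta$:
\[
|J\cdot\nabla\rho|\le C_1\rho, \qquad p\le 2C_0\delta .
\]
Here the first bound is explicit: $J\cdot\nabla\rho=\pm J_2$, and $|J_2|\le\lambda\rho$ since $|\sin(\pi x_2)|\le\pi\rho$. The tangential component $J_1\to\lambda\cos\xi\neq0$ at the wall, which carries the boundary current, is never paired with $\nabla\chi_\delta$. So it causes no difficulty.

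Uniqueness is therefore inherited directly, and the corollary follows with essentially no obstacle beyond this bookkeeping.
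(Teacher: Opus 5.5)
Your proposal is correct and follows the paper's route: the paper also checks Assumption (W) for \eqref{eq:bce} by inspection, using the bound $p\le\tfrac{3\pi}{8}\operatorname{dist}(x,\partial\Omega)$, and then invokes Theorem \ref{thm:weighted}. Your explicit check of the layer bound $|J_2|\le\lambda\rho$ from Step 4 is harmless but not needed, since the proof of that theorem already derives this bound from (W) alone.
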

This proves part \textup{(i)} of Theorem \ref{thm:B}.
\subsection{Characteristics, marginal transport, and failure of compressibility}

We now give the proof of  (iii) and (iv).
We isolate the quantitative mechanism as a lemma. Let 
$m(\xi)=1+\tfrac12\cos\xi\in[\tfrac12,\tfrac32]$, and 
\begin{equation}\label{eq:speeds}
c_\lambda:=\frac{4\lambda}{3\pi},
\qquad
C_\lambda:=\frac{8\lambda}{\pi},
\qquad
\bar t:=\frac{1}{8C_\lambda}=\frac{\pi}{64\lambda} .
\end{equation}
From $v_2=\tfrac{4\lambda\sin\xi}{\pi m(\xi)}$ and
$\tfrac12\le m\le\tfrac32$ one has the two-sided control
\begin{equation}\label{eq:twosided}
|v_2|\le C_\lambda \ \text{ everywhere},
\qquad
v_2\ge c_\lambda \ \text{ wherever }\sin\xi\ge\tfrac12.
\end{equation}
The upper bound  confines the trajectory and the lower
bound  makes it climb.

\begin{lemma}[Transit lemma]\label{lem:transit}
Let $t_*\in(0,\bar t\,]$ and, for $0<\kappa<\tfrac{1}{100}$, put
\[
B_\kappa:=\Big\{(\xi,x_2):\ \xi\in\big(\tfrac\pi6,\tfrac\pi3\big),
\ \ \kappa<\bar\psi(\xi,x_2)<2\kappa,\ \ x_2<\tfrac12\Big\},
\]
a nonempty open subset of $\Omega$ in the co-moving frame. Then:
\begin{itemize}
\item[(a)] $p\le\tfrac32\kappa$ on $B_\kappa$;
\item[(b)] every frame trajectory starting in $B_\kappa$ remains in
$\{x_2<\tfrac12\}$ on $[0,t_*]$, with $\sin\xi\ge\tfrac12$
throughout, and satisfies $x_2(t_*)\ge c_\lambda t_*$;
\item[(c)] consequently $p\big(Z(t_*,z)\big)\ge
c(t_*):=\tfrac18\sin(\pi c_\lambda t_*)>0$ for every $z\in
B_\kappa$, with $c(t_*)$ independent of $\kappa$;
\item[(d)] $A_\kappa:=Z(t_*,B_\kappa)$ is open with
$\LL{2}(A_\kappa)>0$.
\end{itemize}
\end{lemma}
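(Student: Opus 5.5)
The plan is to work throughout in the co-moving frame $(\xi,x_2)=(x_1-t,x_2)$. There $p=\tfrac14\sin(\pi x_2)m(\xi)$ is stationary, and the frame velocity is $(v_1-1,v_2)=\nabla^\perp\psi/p$, with $\nabla^\perp$ taken in $(\xi,x_2)$. Since $\nabla\psi\cdot\nabla^\perp\psi=0$, the function $\psi$ is constant along frame trajectories. I read $\bar\psi(\xi,x_2)$ as the normalised stream function $\sin(\pi x_2)\cos\xi$, which is likewise conserved. The whole argument then comes from two inputs: conservation of $\bar\psi$, and the two-sided control \eqref{eq:twosided} on $v_2$.

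\emph{Part (a) and nonemptiness.} On $B_\kappa$ we have $\xi\in(\tfrac\pi6,\tfrac\pi3)$, so $\cos\xi\ge\tfrac12$. Hence $\sin(\pi x_2)<2\kappa/\cos\xi\le4\kappa$, and with $m\le\tfrac32$ this gives $p\le\tfrac14\cdot4\kappa\cdot\tfrac32=\tfrac32\kappa$. For nonemptiness, take $\xi=\pi/4$ and choose $x_2$ small with $\sin(\pi x_2)=\tfrac32\sqrt2\,\kappa$; this point lies in $B_\kappa$. Openness is clear because $\bar\psi$ is continuous. Note also that $\sin(\pi x_2)<4\kappa<0.04$ forces $x_2(0)<0.02$ on $B_\kappa$.

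\emph{Part (b).} This is the step I expect to be the main obstacle, since $v_1$ is unbounded near the wall and the $\xi$-motion must be controlled without any quantitative bound on it. I argue by continuation on a maximal interval of existence. The field is smooth in $\Omega$, so a trajectory can only fail to continue by reaching a wall.

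First, $\bar\psi\equiv c\in(\kappa,2\kappa)$ is positive along the trajectory, which forces $\cos\xi>0$ and hence $\xi\in(-\tfrac\pi2,\tfrac\pi2)$ mod $2\pi$. While $x_2<\tfrac12$ we have $\cot(\pi x_2)>0$, so the frame speed $\dot\xi=4\lambda\cos\xi\cot(\pi x_2)/m(\xi)$ is positive. Starting from $\xi>\tfrac\pi6$, the trajectory therefore stays in $(\tfrac\pi6,\tfrac\pi2)$, where $\sin\xi\ge\tfrac12$. On this set \eqref{eq:twosided} gives $v_2\ge c_\lambda$, so $x_2$ is increasing and the trajectory moves away from the lower wall. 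Meanwhile $|v_2|\le C_\lambda$ gives $x_2(t)\le x_2(0)+C_\lambda t_*\le 0.02+\tfrac18<\tfrac12$ for $t\le\bar t=1/(8C_\lambda)$.

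These bounds are self-consistent, so a bootstrap closes: the trajectory exists on $[0,t_*]$, stays in $\{x_2<\tfrac12\}$ with $\sin\xi\ge\tfrac12$, and integrating $v_2\ge c_\lambda$ gives $x_2(t_*)\ge x_2(0)+c_\lambda t_*\ge c_\lambda t_*$.

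\emph{Parts (c) and (d).} For (c), $p$ is frame-invariant, and at time $t_*$ we have $x_2\in[c_\lambda t_*,\tfrac12)$ and $m\ge\tfrac12$. Monotonicity of $\sin$ on $[0,\tfrac\pi2]$ then gives $p(Z(t_*,z))\ge\tfrac18\sin(\pi c_\lambda t_*)$, a bound independent of $\kappa$. For (d), (b) shows that all trajectories from $B_\kappa$ stay in a compact subset of $\Omega$ on $[0,t_*]$, where $v$ is smooth. Hence $Z(t_*,\cdot)$ is a diffeomorphism from $B_\kappa$ onto its image. The image $A_\kappa$ is therefore open and nonempty, so $\LL{2}(A_\kappa)>0$.
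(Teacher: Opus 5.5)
Your proposal is correct and follows the paper's proof essentially step for step. Conservation of $\bar\psi=\sin(\pi x_2)\cos\xi$ forces $\cos\xi>0$, and the sign of $\dot\xi$ alone (with no quantitative bound on the unbounded $v_1$) keeps $\xi\in(\tfrac\pi6,\tfrac\pi2)$; the two-sided bound on $v_2$ then closes the bootstrap and gives $x_2(t_*)\ge c_\lambda t_*$, and smooth dependence on initial data gives (d). Your explicit witness $\xi=\pi/4$, $\sin(\pi x_2)=\tfrac32\sqrt2\,\kappa$ for the nonemptiness of $B_\kappa$ is a small addition that the paper leaves implicit.
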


\begin{proof}
(a) On $B_\kappa$, $\cos\xi\in(\tfrac12,\tfrac{\sqrt3}{2})$, so
$\sin(\pi x_2)=\bar\psi/\cos\xi<2\kappa/\tfrac12=4\kappa<\tfrac1{25}$;
hence $x_2<\tfrac1{50}$ and
$p=\tfrac14\sin(\pi x_2)m(\xi)\le\tfrac14\cdot4\kappa\cdot\tfrac32
=\tfrac32\kappa$.

(b) On the orbit through such a point, $\cos\xi=\bar\psi/\sin(\pi
x_2)>0$, so $\xi\in(-\tfrac\pi2,\tfrac\pi2)$ for all time. As long as
$x_2<\tfrac12$ we have $\dot\xi=\tfrac{4\lambda\cos(\pi
x_2)\cos\xi}{\sin(\pi x_2)m}>0$, so $\xi$ increases and remains in
$[\tfrac\pi6,\tfrac\pi2)$; hence $\sin\xi\ge\tfrac12$ and, by
\eqref{eq:twosided}, $c_\lambda\le\dot x_2\le C_\lambda$. Integrating
the \emph{upper} bound, $x_2(t)\le
x_2(0)+C_\lambda t\le\tfrac1{50}+C_\lambda\bar t=\tfrac1{50}
+\tfrac18<\tfrac12$ for $t\le t_*$, so the constraint $x_2<\tfrac12$
is never violated and the bootstrap closes by continuity. Integrating
the \emph{lower} bound gives $x_2(t_*)\ge c_\lambda t_*$.

(c) By (b), $x_2(t_*)\in[c_\lambda t_*,\tfrac12]$, so $\pi
x_2(t_*)\in[\pi c_\lambda t_*,\tfrac\pi2]$ and, $\sin$ being
increasing there, $\sin(\pi x_2(t_*))\ge\sin(\pi c_\lambda t_*)$;
with $m\ge\tfrac12$, $p\ge\tfrac14\sin(\pi c_\lambda
t_*)\cdot\tfrac12=c(t_*)$.

(d) By (b) the trajectories issuing from $B_\kappa$ stay in the open
region $\{0<x_2<\tfrac12,\ \cos\xi>0\}$ on $[0,t_*]$, where the frame
field $v^{\mathrm{fr}}=\nabla^\perp\psi/p$ is $C^\infty$. Hence
$Z(t_*,\cdot)$ is a $C^\infty$ diffeomorphism of a neighbourhood of
$\bar B_\kappa$ onto its image, by smooth dependence on initial
conditions and invertibility of the backward flow; the image of the
nonempty open set $B_\kappa$ is therefore open and of positive
measure.
\end{proof}

\begin{proof}[Proof of Theorem \ref{thm:B}, parts (iii)--(iv)]
\emph{(iii).} In the co-moving frame $(\xi,x_2)$ the ODE $\dot
Z=v(Z,t)$ is autonomous: $\dot\xi=v_1-1$, $\dot x_2=v_2$ define the
field $v^{\mathrm{fr}}=\nabla^\perp\psi/p$, with $\psi,p$ read as
functions of $(\xi,x_2)$. Then
$\tfrac{d}{dt}\psi(\xi(t),x_2(t))=\nabla\psi\cdot
v^{\mathrm{fr}}=0$: $\bar\psi:=\sin(\pi x_2)\cos\xi$ is a first
integral, and orbits lie on its level sets. The level set
$\{\bar\psi=0\}$ is the union of the two walls and the two vertical
circles $\{\cos\xi=0\}$, a Lebesgue-null set; every other level
$\{\bar\psi=\kappa\}$, $0<|\kappa|<1$, is a closed curve in $\Omega$
encircling $(\xi,x_2)=(0,\tfrac12)$ (for $\kappa>0$; symmetrically
for $\kappa<0$), with a lower arc
$x_2=\tfrac1\pi\arcsin(\kappa/\cos\xi)$ hugging the bottom wall, an
upper arc hugging the top wall, and turning points at
$\cos\xi=\kappa$, $x_2=\tfrac12$. On these curves $v$ is smooth and
nonvanishing: solutions are unique, periodic, and confined. The frame
field satisfies $\dive(p\,v^{\mathrm{fr}})=\dive\nabla^\perp\psi=0$,
so the frame flow preserves the measure $p\dd\xi\dd x_2$;
translating to the lab frame gives
$Z(t,\cdot)_\#(p_0\LL{2})=p_t\LL{2}$.

\emph{(iv).} Let $Z$ satisfy the trajectory condition of Definition
\ref{def:rlf}(i). Off the null set $\{\bar\psi=0\}$ the field is
smooth, so $Z$ agrees a.e.\ with the flow of (iii); it suffices to
disprove the compressibility bound for that flow. Since the frame
flow preserves $p\,\LL{2}$, the push-forward
$Z(t,\cdot)_\#\LL{2}$ has density
\[
\rho_t(y)=\frac{p(y)}{p\big(Z(t,\cdot)^{-1}(y)\big)} .
\]
Fix $T>0$ and set $t_*:=\min\{T,\bar t\,\}\in(0,T]$ with $\bar t$ as
in \eqref{eq:speeds}. Let $\kappa\in(0,\tfrac1{100})$ and let
$B_\kappa$, $A_\kappa=Z(t_*,B_\kappa)$ be as in Lemma
\ref{lem:transit}. For $y\in A_\kappa$ write $z:=Z(t_*,\cdot)^{-1}(y)
\in B_\kappa$; then by parts (a) and (c) of that lemma,
\[
\rho_{t_*}(y)=\frac{p(y)}{p(z)}
\ \ge\ \frac{c(t_*)}{\tfrac32\kappa}
\ \xrightarrow[\kappa\downarrow0]{}\ \infty ,
\]
while $\LL{2}(A_\kappa)>0$ by part (d). Hence for every constant $L$
there is $\kappa$ with $Z(t_*,\cdot)_\#\LL{2}\ge L\LL{2}$ on a set
of positive measure. Since Definition \ref{def:rlf}(ii) demands a
single constant $L$ valid for \emph{all} $t\in[0,T]$, and this fails
at $t=t_*\in(0,T]$, no regular Lagrangian flow exists on $[0,T]$.
\end{proof}

\begin{remark}
\label{rem:intermediate}
The orbits of (iii) are periodic in the co-moving frame, so a
trajectory started near the wall passes through the interior and may
return near the wall; nothing in the construction guarantees that
the compression witnessed above is still present at the exact
terminal time $T$, and for large $T$ a statement about
$Z(T,\cdot)_\#\LL{2}$ alone would require an analysis of the period
map. None is needed: failure of the compressibility bound at a
single time $t_*\in(0,T]$ already contradicts Definition
\ref{def:rlf}(ii), which quantifies over all $t\in[0,T]$. The
numerical experiment of Section \ref{subsec:numerics} measures
$\sup_{t\le T}\rho_t$, which is precisely the quantity appearing in
the corrected statement.
\end{remark}

\begin{remark}[A wall-localised compression mechanism]
\label{rem:coherence}
The infinite compression in part (iv) is a wall-localised
infinite-compression mechanism \emph{analogous} to the singular
boundary divergence sheet that Theorem \ref{thm:A} excludes through
tangency; the two are not identified by Lemma \ref{lem:sheet}, which
does not apply to this field, but by the exhaustion computation
above: on each surface $\{x_2=\eta\}$ the extension of $v$ carries a
nonzero singular divergence of size independent of $\eta$, and it is
along the orbits threading those surfaces that the Jacobian
degenerates. The example separates three layers that the
whole-space theory tends to fuse: existence of invariant
characteristics (true here, by the first integral, and guaranteed in
general by Proposition \ref{prop:conf}); pointwise deterministic
transport of the marginals (true here); and the regular Lagrangian
flow property (false here, for every $T>0$).
\end{remark}

\begin{remark}\label{rem:whatitshows}
Since $v\notin L^1$ on every collar, the example violates
\textup{(F2)} and \textup{(F3)} as well as \textup{(F4)}. It
therefore does \emph{not} show that tangency alone is
indispensable in Theorem \ref{thm:A}; what it shows is that the
collection of boundary hypotheses \textup{(F2)--(F4)} cannot be
relaxed far enough to admit the boundary-current mechanism. This is
not an accident of the construction. In the regime of Lemma
\ref{lem:taylor} the hypotheses are structurally entangled: by
Corollary \ref{cor:entangled} below, whenever the density vanishes
linearly at a $C^1$ wall and the flux is $C^1$ up to it, failure of
tangency \emph{forces} $v\notin L^1$ near that wall. Within this
class, therefore, no example can isolate \textup{(F4)}, and it is unclear whether tangency is independently necessary. %--- for fields with an $L^1$ collar bound and a degenerate or non-$C^1$ boundary vacuum --- is open.
\end{remark}

\subsection{Numerical illustration}\label{subsec:numerics}

Figure \ref{fig:bc} displays the mechanism, for $\lambda=1$ and
$T=0.5$. Panel (a) shows the orbit structure in the co-moving frame:
level sets of the first integral $\bar\psi=\sin(\pi x_2)\cos\xi$,
closed curves encircling the elliptic centres and hugging both walls,
with the separatrix $\{\bar\psi=0\}$ (walls and the circles
$\{\cos\xi=0\}$) dashed. Panel (b) is the trace of the normal
velocity on the bottom wall, $v_2(\xi,0)=4\lambda\sin\xi/(\pi
m(\xi))$: nonzero for a.e.\ $\xi$ and outward on the moving
half-wall, the failure of tangency in Theorem \ref{thm:B}(ii) made
visible. Panels (c) and (d) quantify part (iv). For each
$\kappa$, a trajectory is started at the bottom of the lower arc of
the orbit $\{\bar\psi=\kappa\}$ and integrated with an adaptive
high-order scheme; the drift $\sup_{t\le T}|\bar\psi(Z_t)-\kappa|$
remains below $10^{-11}$ throughout. The witnessed compression, measured as
$\sup_{t\le T}\rho_t$ in accordance with Theorem
\ref{thm:B}(iv) and Remark \ref{rem:intermediate},
$\sup_{t\le T}\rho_t$, with $\rho_t=p(Z_t)/p(Z_0)$, follows the
predicted law $c(T)/\kappa$ over two decades:
\[
\begin{array}{c|ccccc}
\kappa & 0.1 & 0.0316 & 0.01 & 0.0032 & 0.001\\ \hline
\sup_{t\le T}\rho_t & 7.00 & 21.4 & 67.0 & 211 & 667\\
\kappa\cdot\sup_{t\le T}\rho_t & 0.700 & 0.677 & 0.670 & 0.668 &
0.667
\end{array}
\]
so that $c(T)\approx\tfrac23$ at $T=0.5$: the ``elementary
phase-plane estimates'' invoked in the proof of Theorem
\ref{thm:B}(iv) are confirmed with the constant included. Panel (d)
shows the density $\rho_T(y)=p(y)/p(Z(T,\cdot)^{-1}(y))$ of the
push-forward $Z(T,\cdot)_\#\LL{2}$, computed pointwise from the
exact formula by backward integration on a grid: the compression
(red) concentrates in bands along the walls downstream of the
turning regions, the depletion (blue) in the regions the near-wall
mass has vacated --- the wall-localised compression mechanism
described in Remark \ref{rem:coherence}. A numerical illustration of the data-endpoint
example of Section \ref{subsec:dataex} would essentially reproduce
the corresponding figure of \cite{pfode}, the image corrections
being exponentially small at plotting scale, and is omitted.

\begin{figure}[t]
\centering
\includegraphics[width=\textwidth]{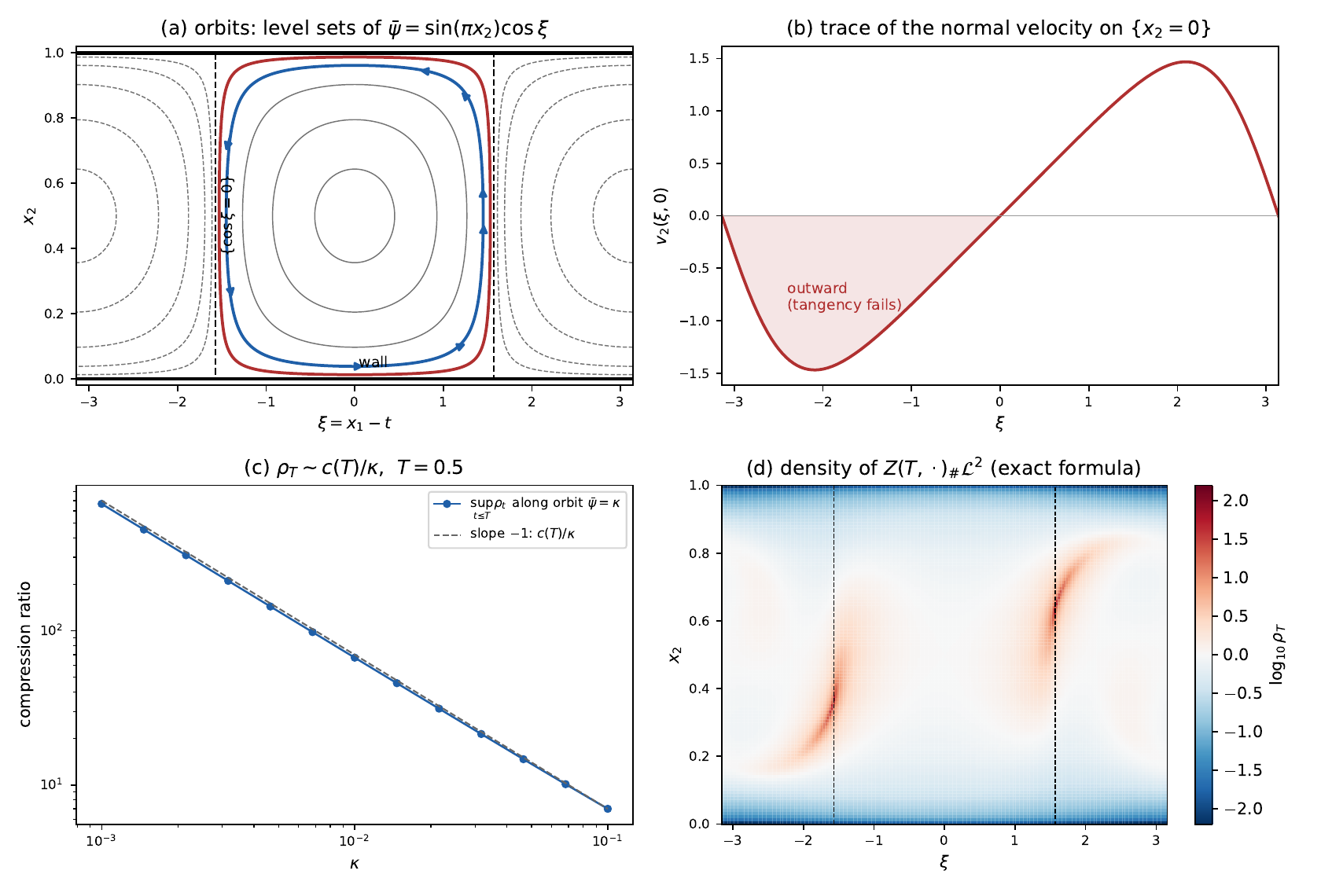}
\caption{The boundary-current example \eqref{eq:bce}, $\lambda=1$.
\textbf{(a)} Orbits of the probability-flow ODE in the co-moving
frame: level sets of $\bar\psi$; the two highlighted orbits
($\kappa=0.04$, $0.12$) hug both walls and pass through mid-height
at the turns; separatrix dashed; arrows indicate the direction of
motion. \textbf{(b)} Trace of the normal velocity on the bottom
wall: nonzero a.e., outward on half the wall (shaded) --- tangency
fails, yet the normal \emph{flux} $p\,v_2$ vanishes because $p$
does. \textbf{(c)} Witnessed compression along the orbit
$\{\bar\psi=\kappa\}$ up to time $T=0.5$, against the predicted
$c(T)/\kappa$ (slope $-1$): no uniform compressibility constant
exists. \textbf{(d)} Push-forward density $\rho_T$ of Lebesgue
measure under the flow, from the exact formula
$\rho_T(y)=p(y)/p(Z(T,\cdot)^{-1}(y))$: compression concentrates
along the walls downstream of the turning regions.}
\label{fig:bc}
\end{figure}

%=====================================================================
\section{Mimicking reflected diffusions with differential equations}
\label{sec:reflected}
%=====================================================================

\subsection{The reflection term disappears from the probability flow}

Consider the Skorokhod problem in a bounded $C^{1,1}$ domain,
\begin{equation}\label{eq:refSDE}
dX_t=f(X_t,t)\dd t+\sigma(t)\dd W_t-\nu(X_t)\dd L_t,\qquad
X_t\in\bar\Omega,\qquad X_0\sim\mu_0\in\PP(\bar\Omega),
\end{equation}
with $\sigma$ scalar, $0<\epsilon\le\sigma\le\epsilon^{-1}$, and $L$
the boundary local time. For bounded measurable $f$,
\eqref{eq:refSDE} is well posed in law --- by the Skorokhod-map
theory for normal reflection in smooth domains \cite{LionsSznitman},
extended to far more general reflection fields and domains by the
convex-duality and extended-Skorokhod-problem framework
\cite{DupuisRamanan,RamananESP} and the submartingale-problem
characterisation \cite{KangRamananSub}; 
for $t>0$ the marginal law
has a density $p_t$, H\"older continuous and strictly positive on
$\bar\Omega$ %--- De Giorgi--Nash--Moser theory and two-sided Gaussian bounds for Neumann-type kernels of divergence-form operators, cf.\ \cite{GSC} --- 
and solves the Fokker--Planck equation with co-normal
no-flux condition
$(\tfrac12\sigma^2\nabla p_t-fp_t)\cdot\nu=0$ on $\partial\Omega$.
Regularity beyond H\"older continuity requires regularity of the
coefficients and is quantified in Corollary \ref{cor:early} below.
Define the probability-flow velocity and flux
\begin{equation}\label{eq:pfvel}
v:=f-\tfrac12\sigma^2\nabla\log p_t,\qquad
J:=fp_t-\tfrac12\sigma^2\nabla p_t=p_tv .
\end{equation}
The interior equation is the continuity equation
$\partial_tp+\dive(pv)=0$, and the boundary condition reads exactly
\begin{equation}\label{eq:tangencyref}
p_t\,v\cdot\nu=0\ \text{ on }\partial\Omega,
\qquad\text{hence}\qquad
v\cdot\nu=0\ \text{ on }\partial\Omega\ \text{ for }t>0,
\end{equation}
whenever $p_t$ is positive and differentiable up to the boundary:
the normal component of
the score exactly cancels $f\cdot\nu$, and the boundary local time
--- a genuinely stochastic object --- is entirely absorbed into
$\nabla\log p_t$. The probability-flow ODE therefore carries no
local time term, in contrast with the reverse-time \emph{SDE} of a
reflected diffusion, which retains one \cite{Cattiaux88}. This is
what makes deterministic samplers for constrained diffusion models
\cite{LouErmon} conceivable. The pair \eqref{eq:pfvel} is a no-flux
weak solution in the sense of Definition \ref{def:nfweak}, and the
question whether the density flow $(p_t)$ may be 'mimicked' by   a deterministic confined
transport  is answered by Theorem \ref{thm:A}:
\begin{corollary}[Early-stopped reflected transport]\label{cor:early}
Let $\partial\Omega\in C^{2,\alpha}$, let $\sigma$ satisfy
$0<\epsilon\le\sigma\le\epsilon^{-1}$ with $\sigma^2\in
C^{\alpha/2}([0,T])$, and let the drift satisfy
\begin{equation}\label{eq:driftclass}
f\in C^{1+\alpha,(1+\alpha)/2}\big(\bar\Omega\times[0,T];\R^d\big),
\end{equation}
so that in particular $\dive f\in
C^{\alpha,\alpha/2}(\bar\Omega\times[0,T])$ and
$\sup_t\|f(\cdot,t)\|_{C^1(\bar\Omega)}<\infty$. No compatibility
condition on the initial law is imposed: the estimates below are
interior in time.
Let $X$ solve \eqref{eq:refSDE} with \emph{arbitrary} initial law
$\mu_0\in\PP(\bar\Omega)$, possibly singular. Then for every
$\delta\in(0,T)$ the pair \eqref{eq:pfvel} satisfies hypotheses
\textup{(F0)--(F4)} on $[\delta,T]$, and the probability-flow ODE
generates a regular Lagrangian flow on $\bar\Omega$ transporting
$p_\delta$ onto $p_t$ for every $t\in[\delta,T]$, with no reflection
mechanism; in fact $[\dive v]^\pm\in L^\infty$, so the two-sided
bound of Remark \ref{rem:timerev} holds as well. The constants
degenerate as $\delta\downarrow0$.
\end{corollary}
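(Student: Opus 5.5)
The plan is to reduce everything to Corollary \ref{cor:auto} applied on the time interval $[\delta,T]$, with initial datum $p_\delta$. That corollary needs two inputs: regularity of $p$ up to the boundary, interior in time, and strict positivity on $\bar\Omega$. Both are supplied by parabolic theory for the co-normal problem.

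\emph{Step 1: regularity and positivity up to the boundary.} Expanding the Fokker--Planck equation in nondivergence form gives
\[
\partial_tp-\tfrac12\sigma^2\Delta p+f\cdot\nabla p+(\dive f)p=0,
\]
with the oblique boundary condition $\tfrac12\sigma^2\partial_\nu p-(f\cdot\nu)p=0$. By the hypotheses, all coefficients lie in $C^{\alpha,\alpha/2}$ and the boundary coefficient $f\cdot\nu$ lies in $C^{1+\alpha,(1+\alpha)/2}$.

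The initial law $\mu_0$ may be singular, so regularity has to be bootstrapped from $t=\delta/2$ onwards:
\begin{itemize}
\item From the Hölder continuity of $p_t$ for $t>0$ (recalled before \eqref{eq:pfvel}), $p\in L^\infty(\Omega\times[\delta/4,T])$.
\item Localise in time with a cutoff $\eta(t)$ that vanishes near $\delta/4$. The product $\eta p$ then solves the same oblique problem with a bounded source $\eta'p$ and zero initial data.
\item Apply $L^q$ and then Schauder estimates for the oblique (co-normal) derivative problem on a $C^{2,\alpha}$ domain, as in Lieberman or Ladyzhenskaya--Solonnikov--Ural'tseva. This gives $p\in C^{2+\alpha,1+\alpha/2}(\bar\Omega\times[\delta/2,T])$.
\end{itemize}

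For positivity I would use the strong maximum principle together with the Hopf lemma for the oblique problem. The function $p_t$ is nonnegative with mass one, so it is not identically zero. An interior zero is ruled out by the strong maximum principle. A boundary minimum with value zero would, by Hopf, force $\partial_\nu p<0$ there, while the boundary condition forces $\partial_\nu p=(2f\cdot\nu/\sigma^2)\,p=0$. Hence $\min_{\bar\Omega\times[\delta,T]}p>0$. I expect this step to be the main obstacle: the initial law gives no compatibility, so every constant must be tracked as interior in time, and it has to be checked that the cited oblique Schauder theory really needs only $\sigma^2\in C^{\alpha/2}$ in time.

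\emph{Step 2: verifying (F0)--(F3).} With $p\in C^{2,\alpha}$ and $p\ge c_\delta>0$, the velocity $v=f-\tfrac12\sigma^2\nabla\log p$ lies in $C^1(\bar\Omega)$ uniformly in $t\in[\delta,T]$. This gives:
\begin{itemize}
\item (F2), with $BV$ bounds on all of $\Omega$, hence on any collar;
\item (F3), with $\dive v=\dive f-\tfrac12\sigma^2\bigl(\Delta p/p-|\nabla p|^2/p^2\bigr)$ bounded. In fact both $[\dive v]^\pm$ lie in $L^\infty$, which is what Remark \ref{rem:timerev} needs.
\end{itemize}
(F1) is immediate from positivity. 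For (F0) I would differentiate $\int_\Omega p\varphi$ in time and use the classical co-normal condition in the divergence theorem. This yields \eqref{eq:NFintro} on $[\delta,T]$ with datum $p_\delta$; narrow continuity follows from continuity of $p$.

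\emph{Step 3: conclusion.} Corollary \ref{cor:auto} now gives (F4). Pointwise, this is \eqref{eq:tangencyref}: $J\cdot\nu=0$ and $p>0$ together force $v\cdot\nu=0$. Theorem \ref{thm:A} applied on $[\delta,T]$ then gives the confined regular Lagrangian flow transporting $p_\delta$ to $p_t$. The two-sided divergence bound gives essential invertibility as in Remark \ref{rem:timerev}. All constants, namely $c_\delta$ and the Schauder norms, depend on $\delta$ through the cutoff and blow up as $\delta\downarrow0$.
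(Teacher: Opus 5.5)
Your proposal is correct and follows essentially the paper's own route: interior-in-time Schauder regularity for the co-normal problem, positivity from the interior maximum principle (Harnack) plus the Hopf lemma against the co-normal condition, verification of (F0)--(F3) with a two-sided divergence bound, and Theorem \ref{thm:A} applied on $[\delta,T]$. The differences are cosmetic. You make the interior-in-time estimate explicit through time cutoffs and an $L^q$-then-Schauder bootstrap, where the paper cites interior-in-time Schauder estimates directly. You also obtain (F4) via Corollary \ref{cor:auto}, whereas the paper simply divides the classical co-normal condition by $p>0$ and identifies the classical boundary values with the $BV$ trace.
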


\begin{proof}
\emph{Step 1 (regularity and positivity).}  Under \eqref{eq:driftclass},
$\sigma^2\in C^{\alpha/2}$ and $\partial\Omega\in C^{2,\alpha}$, all
coefficients of the co-normal (oblique) problem are parabolically
H\"older continuous. Fix $\delta\in(0,T)$. On
$\bar\Omega\times[\delta/2,T]$ the density is a bounded weak
solution --- by De Giorgi--Nash--Moser theory and the Gaussian
bounds recalled above --- and boundary Schauder estimates for the
co-normal problem \cite{LSU} are \emph{interior in time}: they
bound $\|p\|_{C^{2+\alpha,1+\alpha/2}(\bar\Omega\times[\delta,T])}$
in terms of $\|p\|_{L^\infty(\bar\Omega\times[\delta/2,T])}$, the
coefficient norms, and $(\delta/2)^{-1}$, with no compatibility
condition at the left endpoint. Restricting to $[\delta,T]$ gives
$p\in C^{2+\alpha,1+\alpha/2}(\bar\Omega\times[\delta,T])$. This
route avoids treating $\delta$ as an initial parabolic time, and
hence avoids assuming the very boundary regularity of $p_\delta$
that is being established. Moreover $p>0$ on $\bar\Omega\times[\delta,T]$:
interior positivity follows from the Harnack inequality, and a zero
at a boundary point $(x_0,t_0)$ would force $\partial_\nu
p(x_0,t_0)<0$ by the Hopf lemma, contradicting the co-normal
condition $\tfrac12\sigma^2\partial_\nu p=(f\cdot\nu)\,p=0$ at that
point. Set $c_\delta:=\inf_{\bar\Omega\times[\delta,T]}p>0$.

\emph{Step 2 (F0)--(F1).} Re-basing at time $\delta$, the pair
$(p_t,J_t)_{t\in[\delta,T]}$ with initial condition $p_\delta\in
C^{2,\alpha}(\bar\Omega)\subset L^1\cap L^\infty(\Omega)$ is a
classical solution of the no-flux problem, hence a no-flux weak
solution in the sense of Definition \ref{def:nfweak} by the
divergence theorem; $t\mapsto p_t\Ld$ is narrowly continuous, and
$p\ge c_\delta>0$ gives (F1).

\emph{Step 3 (F2).} By Step 1 and $p\ge c_\delta$,
$\nabla\log p=\nabla p/p\in C^{1,\alpha}(\bar\Omega;\R^d)$ with
norms uniform on $[\delta,T]$; by \eqref{eq:driftclass},
$f(\cdot,t)\in C^1(\bar\Omega)\subset W^{1,\infty}(\Omega)$ with
uniformly bounded norms. Hence
\[
v=f-\tfrac12\sigma^2\nabla\log p\in
L^\infty\big([\delta,T];W^{1,\infty}(\Omega;\R^d)\big)\subset
L^1\big([\delta,T];BV(\Omega;\R^d)\big),
\]
measurability in $t$ following from that of $f$ and $\sigma$ and
continuity of $t\mapsto p_t$ in $C^2(\bar\Omega)$. Global BV on
$\Omega$ implies both the interior-local and the collar parts of
(F2).

\emph{Step 4 (F3).} For a.e.\ $t$,
$D\!\cdot v_t=\big(\dive f_t-\tfrac12\sigma^2\Delta\log
p_t\big)\Ld$ is absolutely continuous: $f_t\in C^1(\bar\Omega)$
makes $\dive f_t$ a bounded continuous function, and
$\Delta\log p=\Delta p/p-|\nabla p|^2/p^2\in C^{\alpha}(\bar\Omega)$
with bounds uniform on $[\delta,T]$ by Step 1. Hence
$[\dive v]^\pm\in L^\infty([\delta,T]\times\Omega)$, which is (F3)
with a two-sided bonus.

\emph{Step 5 (F4).} Dividing the co-normal condition by $p>0$
gives, pointwise on $\partial\Omega$,
$v\cdot\nu=f\cdot\nu-\tfrac12\sigma^2\partial_\nu\log p=0$; for a
field in $C(\bar\Omega)\cap W^{1,\infty}(\Omega)$ the classical
boundary values coincide with the interior BV trace, so (F4) holds.

\emph{Step 6.} Theorem \ref{thm:A}, applied on $[\delta,T]$ with
initial condition $p_\delta$, yields the regular Lagrangian flow and
the transport identity; the constants involve $c_\delta^{-1}$ and
the Schauder norms on $[\delta,T]$, which degenerate as
$\delta\downarrow0$.
\end{proof}

\begin{remark}[The role of the drift assumptions]\label{rem:driftrole}

In nondivergence form
the equation reads
\[
\partial_tp-\tfrac12\sigma^2\Delta p+f\cdot\nabla p+(\dive f)\,p=0,
\]
whose zeroth-order coefficient is $\dive f$. Schauder theory
requires this coefficient, and not merely $f$, to be parabolically
H\"older continuous, which is why \eqref{eq:driftclass} imposes
$f\in C^{1+\alpha,(1+\alpha)/2}$ rather than $f\in
C^{\alpha,\alpha/2}$. The latter
gives $\dive f$ bounded but not H\"older, and yields only
first-derivative H\"older regularity of $p$, insufficient for the
Hessian control needed in Step 4.

Corollary \ref{cor:early} may also be stated conditionally: whenever the
reflected problem is well posed and the probability-flow velocity
satisfies \textup{(F2)--(F3)} on $[\delta,T]$, Theorem \ref{thm:A}
applies, tangency being supplied by Corollary \ref{cor:auto} as soon
as $p$ is $C^1$ and positive up to the boundary; the drift class
\eqref{eq:driftclass} is one verifiable instance, and the rougher
set below a second. In the regularity class of Corollary
\ref{cor:early} the field $v$ is
spatially Lipschitz on $[\delta,T]$ and the flow is in fact
classical; the content of the corollary is that the \emph{initial
law} may be arbitrary --- early stopping regularises the score, not
the drift. The drift hypotheses cannot be dispensed with by
regularity of $p$ alone: hypotheses \textup{(F2)--(F3)} constrain
$v=f-\tfrac12\sigma^2\nabla\log p$ jointly, and for merely bounded
measurable $f$ neither BV regularity nor an absolutely continuous,
one-sidedly bounded divergence of $v$ is available, however smooth
the score part may be. This mirrors the whole-space Lagrangian
theorem \cite[Theorem~4.5]{pfode}, which likewise imposes joint
drift--score hypotheses. Beyond the Schauder class
\eqref{eq:driftclass}, in which $v$ is Lipschitz, the natural
rougher hypotheses are the bounded-domain transcription of
$(D_L)+(S)$ of \cite{pfode}: for the drift,
\[
f\in L^1\big([0,T];BV(\Omega;\R^d)\big),
\quad
D\!\cdot f_t\ \text{absolutely continuous},
\quad
[\dive f]^-\in L^1\big([0,T];L^\infty(\Omega)\big),
\]
and for the score, $\nabla\log p\in
L^1\big([\delta,T];BV(\Omega;\R^d)\big)$ with
$[\Delta\log p]^+\in L^1([\delta,T];L^\infty(\Omega))$, together
with tangency. These give \textup{(F2)--(F4)} directly and are
where Theorem \ref{thm:A} earns its generality; unlike the Schauder
class, however, the score conditions are then hypotheses rather than
consequences, since parabolic regularity theory does not reach them
from rough coefficients.
\end{remark}

\begin{remark}[Bounded measurable drift: superposition, not flow]
\label{rem:rough}
For merely bounded measurable $f$, the marginal density is H\"older
continuous and strictly positive on $\bar\Omega\times[\delta,T]$, as
recalled above, but need not be $C^1$, and no flow-level conclusion
is claimed. What survives is a characteristics-level statement, and
it must be rebased at a positive time: for a singular initial law
the flux need not be square integrable up to $t=0$. Fix
$\delta\in(0,T)$. On $[\delta,T]$ the density is bounded, bounded
away from $0$, and satisfies the parabolic energy estimate, so
$J=fp-\tfrac12\sigma^2\nabla p\in L^2([\delta,T]\times\Omega)\subset
L^1([\delta,T]\times\Omega)$, and $p>0$ there makes the vacuum
hypothesis \eqref{eq:novacuumflux} vacuous. Proposition
\ref{prop:conf}, applied to the evolution rebased at $p_\delta$,
therefore yields a superposition of integral curves of $v$ confined
to $\bar\Omega$ on $[\delta,T]$ with marginals $p_t$. Whether a
regular Lagrangian flow exists in this generality, and what happens
on $(0,\delta)$, are open.
\end{remark}
The following corollary treats the case of two widely used diffusion models \cite{Song2021}.
\begin{corollary}[Reflected variance-exploding and variance-preserving
models]\label{cor:vpve}
Let $\partial\Omega\in C^{2,\alpha}$ and let $X$ solve
\eqref{eq:refSDE} with either
\begin{itemize}
\item[\textup{(VE)}] $f\equiv0$ and $0<\epsilon\le\sigma(t)\le
\epsilon^{-1}$ measurable, or
\item[\textup{(VP)}] $f(x,t)=-\beta(t)x$ and $\sigma(t)=
\sqrt{2\beta(t)}$ with $0<\beta_{\min}\le\beta(t)\le\beta_{\max}$
and $\beta\in C^{(1+\alpha)/2}([0,T])$ --- satisfied by the
schedules used in practice, which are smooth,
\end{itemize}
the reflected analogues of the forward diffusions used in
score-based generative models. Then for \emph{arbitrary}, possibly
singular, initial laws $\mu_0\in\PP(\bar\Omega)$ and every
$\delta\in(0,T)$, the probability-flow velocity satisfies
\textup{(F0)--(F4)} on $[\delta,T]$ and the probability-flow ODE
generates a regular Lagrangian flow on $\bar\Omega$ transporting
$p_\delta$ onto $p_t$, $t\in[\delta,T]$.
\end{corollary}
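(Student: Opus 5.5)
The plan is to reduce Corollary \ref{cor:vpve} directly to Corollary \ref{cor:early}. In each case I will check the drift and diffusion hypotheses of that corollary: the boundary regularity $\partial\Omega\in C^{2,\alpha}$, the bounds $0<\epsilon\le\sigma\le\epsilon^{-1}$ with $\sigma^2\in C^{\alpha/2}([0,T])$, and the drift class \eqref{eq:driftclass}. Corollary \ref{cor:early} then delivers (F0)--(F4) on $[\delta,T]$ and the confined regular Lagrangian flow transporting $p_\delta$ onto $p_t$, for arbitrary $\mu_0$. The statement is stated for arbitrary initial laws, and that freedom is inherited verbatim, since Corollary \ref{cor:early} imposes no compatibility condition.

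\emph{Case (VP).} Set $\sigma^2=2\beta$. Then $\sigma^2\in C^{(1+\alpha)/2}\subset C^{\alpha/2}$, and $2\beta_{\min}\le\sigma^2\le2\beta_{\max}$ gives the two-sided bound with $\epsilon:=\min\{\sqrt{2\beta_{\min}},(2\beta_{\max})^{-1/2}\}$. For $f(x,t)=-\beta(t)x$, spatial derivatives of all orders are bounded on the bounded set $\bar\Omega$, and $\nabla f=-\beta(t)I$. Membership in $C^{1+\alpha,(1+\alpha)/2}$ therefore requires $\nabla f$ to be $C^{\alpha,\alpha/2}$ and $f$ to be $C^{(1+\alpha)/2}$ in time. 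Both follow from $\beta\in C^{(1+\alpha)/2}$, because $|f(x,t)-f(x,s)|\le\operatorname{diam}$-bounded$\cdot|\beta(t)-\beta(s)|$ and $C^{(1+\alpha)/2}\subset C^{\alpha/2}$ on $[0,T]$. The only thing to watch is that the time-Hölder exponent $(1+\alpha)/2$ is exactly the one built into the parabolic space, and the hypothesis on $\beta$ was chosen to match it.

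\emph{Case (VE).} Here $f\equiv0$ trivially lies in \eqref{eq:driftclass}, but $\sigma$ is only assumed measurable, whereas Corollary \ref{cor:early} requires $\sigma^2\in C^{\alpha/2}$. I expect this to be the main obstacle. I would remove it by a time change. Put $\tau(t):=\int_0^t\sigma(s)^2\,ds$, which is bi-Lipschitz because $\epsilon^2\le\sigma^2\le\epsilon^{-2}$. Then $\tilde X_\tau:=X_{t(\tau)}$ is a reflected Brownian motion with constant diffusion coefficient, which certainly satisfies the hypotheses of Corollary \ref{cor:early}. Its density $\tilde p$ is therefore $C^{2+\alpha}$ in space and positive on $\bar\Omega\times[\tau(\delta),\tau(T)]$. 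Since $p_t=\tilde p_{\tau(t)}$, the spatial regularity and positivity transfer to $p$ uniformly on $[\delta,T]$. Time regularity is not needed in Steps 3--5 of that proof beyond measurability. Steps 2--6 of the proof of Corollary \ref{cor:early} then go through verbatim for $v=-\tfrac12\sigma^2\nabla\log p_t$, using $\sigma\in L^\infty$ for (F2) and (F3). Tangency again follows from the Neumann condition $\partial_\nu p=0$ divided by $p>0$.

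If a cleaner formal route is preferred, the alternative is the following. First prove the statement under $\sigma^2\in C^{\alpha/2}$ directly via Corollary \ref{cor:early}. Then observe that the RLF for $v$ is the time-reparametrisation of the RLF for $\tilde v=-\tfrac12\nabla\log\tilde p$, which is $v$ rescaled by $\sigma^{-2}$. Uniqueness up to null sets in Theorem \ref{thm:ambrosio} and the transport identity are invariant under such bi-Lipschitz reparametrisations, and the compressibility constant changes only by the Jacobian of $\tau$.
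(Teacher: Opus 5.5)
Your proposal is correct and follows essentially the paper's own argument. For (VP) you verify the hypotheses of Corollary~\ref{cor:early} directly from the affine drift and $\sigma^2=2\beta$. For (VE) you remove the merely measurable $\sigma$ by a bi-Lipschitz time change (you take $\tau=\int_0^t\sigma^2$; the paper takes $\int_0^t\tfrac12\sigma^2$, which gives $\partial_\tau\tilde p=\Delta\tilde p$). You then import Step~1's Schauder regularity and positivity for the constant-coefficient Neumann problem and rerun Steps~2--6 in the original time variable, exactly as the paper does. One small slip in your optional alternative: reparametrising the flow leaves the compressibility constant unchanged rather than altering it by the Jacobian of $\tau$, since the bound is uniform in time and $\int\|[\dive v]^-\|_\infty\,dt$ is invariant under the change of variables.
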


\begin{proof}
\emph{(VP).} The drift is spatially affine, so $\nabla f=-\beta(t)I$
and $\dive f=-d\,\beta(t)$ are as regular in time as $\beta$ and
constant in space; with $\beta\in C^{(1+\alpha)/2}([0,T])$ one has
$f\in C^{1+\alpha,(1+\alpha)/2}(\bar\Omega\times[0,T];\R^d)$, which
is \eqref{eq:driftclass}, and
$\sup_t\|f(\cdot,t)\|_{C^1(\bar\Omega)}\le
\beta_{\max}\big(1+\sup_{\bar\Omega}|x|\big)<\infty$ --- boundedness
of the domain doing what linear-growth conditions do in the whole
space. Moreover $\sigma^2=2\beta\in C^{\alpha/2}([0,T])$ with
$2\beta_{\min}\le\sigma^2\le2\beta_{\max}$; the time-H\"older
hypothesis on $\beta$ is the analogue of the time-regularity imposed
in \cite{pfode} for parabolic regularity of the score. The affine
structure is what makes this case safe: $\dive f$ is a function of
$t$ alone, so the zeroth-order coefficient of the nondivergence form
is automatically H\"older. All hypotheses of Corollary
\ref{cor:early} hold, and its conclusion is the claim. Note that $f\cdot\nu=-\beta\,x\cdot\nu\not\equiv0$, so
the boundary condition is genuinely co-normal (Remark
\ref{rem:vpve}) and Step 5 of the proof of Corollary
\ref{cor:early} uses it in full.

\emph{(VE).} Here $\sigma$ is merely measurable and the Schauder
input of Corollary \ref{cor:early} is not directly available; a
deterministic time change supplies it. Let
$\tau(t):=\int_0^t\tfrac12\sigma(s)^2\dd s$, a bi-Lipschitz
increasing bijection of $[0,T]$ onto $[0,\tau(T)]$ with
$\tfrac12\epsilon^2\le\tau'\le\tfrac12\epsilon^{-2}$. Then
$\tilde p_\tau:=p_{t(\tau)}$ solves the Neumann heat equation
$\partial_\tau\tilde p=\Delta\tilde p$, and Step 1 of the proof of
Corollary \ref{cor:early} (with $f\equiv0$ and unit diffusion)
gives $\tilde p\in
C^{2+\alpha,1+\alpha/2}\big(\bar\Omega\times[\tau(\delta),\tau(T)]\big)$
with $\inf\tilde p>0$. Back in the original time variable,
$\sup_{t\in[\delta,T]}\|p_t\|_{C^{2,\alpha}(\bar\Omega)}<\infty$,
$\inf_{\bar\Omega\times[\delta,T]}p>0$, and $t\mapsto p_t$ is
continuous into $C^2(\bar\Omega)$. Steps 2--5 of that proof now
apply verbatim in the original time variable:
$v=-\tfrac12\sigma(t)^2\nabla\log p_t\in
L^\infty([\delta,T];W^{1,\infty}(\Omega;\R^d))$, the bounded
measurable factor $\sigma^2$ being harmless;
$D\!\cdot v_t=-\tfrac12\sigma^2\Delta\log p_t\,\Ld$ is absolutely
continuous with a two-sided $L^\infty$ bound; and the Neumann
condition $\partial_\nu p=0$ gives $v\cdot\nu=0$ pointwise. Theorem
\ref{thm:A} concludes.
\end{proof}

\begin{remark}[No-flux versus Neumann; loss of Gaussian structure]
\label{rem:vpve}
Two features distinguish the reflected models from their whole-space
counterparts. First, for (VE) the no-flux condition reduces to the
homogeneous Neumann condition $\partial_\nu p=0$, but for (VP) it is
genuinely co-normal: $\partial_\nu p=(2f\cdot\nu/\sigma^2)\,p$ with
$f\cdot\nu=-\beta\,x\cdot\nu\not\equiv0$, a drift-dependent
Robin-type relation; imposing plain Neumann instead would violate
conservation of mass. Second, reflection destroys the
variation-of-constants representation: the marginals of reflected
Ornstein--Uhlenbeck dynamics are not Gaussian convolutions of the
initial law, so the explicit score bounds available in the whole
space have no direct analogue, and the regularity input for Corollary
\ref{cor:vpve} comes instead from Neumann parabolic theory
\cite{LSU}. The example of Section \ref{subsec:dataex} is exactly the
reflected (VE) model with $\sigma\equiv1$ started from compactly
supported data, and shows the early stopping in Corollary
\ref{cor:vpve} is necessary.
\end{remark}

\begin{remark}[Relation to the Skorokhod-problem and
probabilistic literature]\label{rem:ramanan}
The well-posedness statement above and the formulation (NF) relate to a substantial probabilistic literature on reflected diffusions. First, the
Skorokhod-map theory \cite{burdzy2004,LionsSznitman,saisho1987,DupuisRamanan,RamananESP}
achieves \emph{pathwise} stability of the reflected SDE through the
constraining mechanism, whereas Theorem \ref{thm:A} achieves
flow-level well-posedness of the ODE with the constraining mechanism
absorbed entirely into the score: two opposite resolutions of the
same confinement problem. Second, (NF) is the time-dependent,
normal-reflection, absolutely continuous case of the \emph{basic
adjoint relationship} used to characterise stationary distributions
of reflected diffusions \cite{HarrisonWilliams,KangRamananStat},
which in general involves a pair of measures --- an interior measure
and a boundary measure, the Eulerian shadow of the local time. For
normal reflection with nondegenerate diffusion and a density
positive up to the boundary, the boundary measure is determined by
the trace of $p$ and the pair collapses to a single density with the
co-normal condition: the Eulerian counterpart of the observation
\eqref{eq:tangencyref} that the local time disappears from the
probability-flow velocity. For \emph{oblique} reflection the
tangential component of the reflection field generically induces a
surface flux along $\partial\Omega$, and in the piecewise-smooth and
degenerate settings of \cite{KangRamananSub,KangRamananStat} the
marginals may charge the boundary, so the measure-pair formulation
is unavoidable --- a singular cousin of the boundary current of
Theorem \ref{thm:B}. Finally, in
nonsmooth domains the constraining term may fail to have bounded
variation and the reflected process is only a Dirichlet process
\cite{KangRamananDir}: the pathwise counterpart of the degradation
of score regularity at non-convex boundaries discussed in Remark
\ref{rem:convex} below.
\end{remark}

\begin{remark}[Geometry influences the score]\label{rem:convex}
In the classical regime more is true: for reflected Brownian motion
or reflected Ornstein--Uhlenbeck dynamics in a bounded \emph{convex}
$C^{1,1}$ domain, with $0<c_0\le p_0\le C_0$, $\log p_0\in
C^{1,1}(\bar\Omega)$ and the Neumann compatibility
$\nabla\log p_0\cdot\nu=2f\cdot\nu/\sigma^2$ on $\partial\Omega$,
log-Hessian bounds propagate along the Neumann semigroup ---
convexity making the boundary contribution to the Bakry--\'Emery
curvature nonnegative \cite{Wang} --- and the score is Lipschitz up
to $\bar\Omega$ uniformly on $[0,T]$. Convexity is not decorative:
at a re-entrant corner, second-derivative estimates for the Neumann
problem degrade and $\nabla^2\log p_t$ may blow up at the boundary
even for smooth interior data. The geometry of $\partial\Omega$ thus
enters the regularity theory of the score, a phenomenon absent from
the whole-space setting \cite{pfode}.
\end{remark}

\subsection{Failure at the endpoint: the boundary is innocent}
\label{subsec:dataex}

The early-stopping in Corollary \ref{cor:early} is necessary: we now
show that on the full interval $[0,T]$, Eulerian well-posedness does
not imply deterministic transport, through a mechanism inherited from
the data --- and that the reflecting boundary is provably innocent.

Consider reflected Brownian motion in $\bar\Omega=[-2,2]$:
\begin{equation}\label{eq:dataex}
d=1,\qquad f\equiv0,\qquad\sigma\equiv1,\qquad
p_0=\tfrac12\mathbf{1}_{[-1,1]} .
\end{equation}
With $f\equiv0$, Proposition \ref{prop:energyuniq} applies trivially
and gives uniqueness in $\mathcal{X}_\Omega$ outright. The marginal
flow is given by the method of images: with $\varphi_t$ the heat
kernel, $q_t:=p_0*\varphi_t$ the whole-line solution, and the
$8$-periodic even extension of $p_0$ across $x=\pm2$,
\begin{equation}\label{eq:images}
p_t(x)=\sum_{n\in\mathbb{Z}}\Big[q_t(x-8n)+q_t(4-x-8n)\Big],
\qquad x\in[-2,2],\ t>0,
\end{equation}
smooth, strictly positive on $[-2,2]$ for $t>0$, with
$\partial_xp_t(\pm2)=0$ and $\|\partial_xp_t\|_{L^2}^2=O(t^{-1/2})$,
hence in $\mathcal{X}_\Omega$: it is \emph{the} no-flux density
evolution for \eqref{eq:dataex}. Note $v(\pm2,t)=
-\tfrac12\partial_x\log p_t(\pm2)=0$ for every $t>0$: tangency holds
at the walls at all positive times.

\begin{lemma}[Edge asymptotics]\label{lem:asymp}
Fix a compact $K\subset(1,2)$. For $x\in K$, as $t\downarrow0$,
\[
\log p_t(x)=-\frac{(x-1)^2}{2t}-\log(x-1)+\tfrac12\log t+c+O(t),
\qquad
v(x,t)=\frac{x-1}{2t}+O(1),
\]
uniformly on $K$, and symmetrically on compacts of $(-2,-1)$.
\end{lemma}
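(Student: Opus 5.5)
Using the image representation \eqref{eq:images}, I will isolate a single dominant term and show that everything else is exponentially smaller than it. Write $q_t(y)=\tfrac12\big[\Phi\big(\tfrac{y+1}{\sqrt t}\big)-\Phi\big(\tfrac{y-1}{\sqrt t}\big)\big]$, with $\Phi$ the standard normal distribution function. For $x\in K\subset(1,2)$, put $\delta_K:=\min_K(x-1)>0$ and $D_K:=\min_K(2-x)>0$.

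The $n=0$ direct term is $q_t(x)=\tfrac12\bar\Phi\big(\tfrac{x-1}{\sqrt t}\big)-\tfrac12\bar\Phi\big(\tfrac{x+1}{\sqrt t}\big)$, where $\bar\Phi=1-\Phi$. By the Mills-ratio expansion $\bar\Phi(z)=\tfrac{\varphi(z)}{z}\big(1-z^{-2}+O(z^{-4})\big)$, applied with $z=(x-1)/\sqrt t$, we get
\[
q_t(x)=\frac{\sqrt t}{2\sqrt{2\pi}\,(x-1)}\,e^{-(x-1)^2/2t}\big(1+O(t)\big),
\]
uniformly on $K$. The second piece, $\bar\Phi((x+1)/\sqrt t)$, is $O(e^{-(x+1)^2/2t})$, which is relatively $O(e^{-2x/t})$ and so negligible.

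Every other image term carries a Gaussian factor $e^{-d^2/2t}$, with $d$ the distance from $x$ (or from the reflected point $4-x$) to the shifted support $[-1,1]+8n$ or $4-[-1,1]+8n$. The nearest competitor is the reflected image at $n=0$, namely $q_t(4-x)$, for which $d=3-x$. Since $x<2$ we have $3-x>x-1$, with gap $4-2x\ge 2D_K$. Hence $(3-x)^2-(x-1)^2=2(4-2x)\ge 4D_K$, and this term is relatively $O(e^{-2D_K/t})$. All remaining terms are farther away still, and their sum is controlled by a geometric series. So the total relative correction is $O(e^{-c_K/t})\subset O(t)$. Taking logarithms gives the first claim, with $c=-\tfrac12\log(8\pi)$.

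For the velocity I need $v=-\tfrac12\partial_x\log p_t$, and I will differentiate the series directly rather than the asymptotic formula. The derivative of the main term is $\partial_xq_t(x)=\tfrac12[\phi_t(x+1)-\phi_t(x-1)]$, where $\phi_t$ is the heat kernel. This equals $-\tfrac12\phi_t(x-1)\big(1+O(e^{-c/t})\big)$, an exact Gaussian with no Mills expansion required. The derivatives of the image terms are likewise exponentially smaller, by the same distance comparison: each is a difference of kernels at points at distance at least $3-x$.

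Therefore
\[
\partial_x\log p_t=\frac{-\tfrac12\phi_t(x-1)}{q_t(x)}\big(1+O(e^{-c/t})\big).
\]
The ratio $\phi_t(z)/\bar\Phi(z/\sqrt t)$ equals $\tfrac{z}{t}\big(1+t z^{-2}+O(t^2)\big)$, again by Mills. This gives $\partial_x\log p_t=-\tfrac{x-1}{t}-\tfrac{1}{x-1}+O(t)$, and hence
\[
v=\frac{x-1}{2t}+\frac{1}{2(x-1)}+O(t)=\frac{x-1}{2t}+O(1).
\]
The case $x\in(-2,-1)$ follows by the symmetry $x\mapsto-x$ of $p_0$ and of the domain.

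The main obstacle is making all of this uniform on $K$. Two issues need care. First, the exponent gap must be bounded below uniformly, which is why $K$ has to stay away from $2$: near the wall the reflected image becomes comparable, since $3-x\to x-1$ fails only at $x=2$. Second, the Mills expansions must be uniform, which follows because $z=(x-1)/\sqrt t\ge\delta_K/\sqrt t\to\infty$ uniformly. Summing the $n\ne0$ tails needs only the crude bound $e^{-(8|n|-3)^2/2t}$.
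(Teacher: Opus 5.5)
Your proposal is correct and follows the paper's proof. You isolate $q_t(x)$ via the Mills/erfc expansion, you beat the nearest reflected image $q_t(4-x)$ through $(3-x)^2-(x-1)^2=8-4x$, which is bounded below on $K$, and you dismiss the remaining images as farther away. Differentiating the series directly with the exact Gaussian $\partial_x q_t$ is just a more careful version of the paper's one-line ``differentiating in $x$'' step, and it even gives the sharper $v=\frac{x-1}{2t}+\frac{1}{2(x-1)}+O(t)$.

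One trivial slip: the reflected image $q_t(-4-x)$ (your $n=1$ term) lies at support-edge distance $3+x\in(4,5)$, so the crude tail bound should read $e^{-(8|n|-4)^2/2t}$ rather than $e^{-(8|n|-3)^2/2t}$. This changes nothing in the argument.
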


\begin{proof}
For $x>1$, $q_t(x)=\tfrac14\big[\operatorname{erfc}\big(\tfrac{x-1}
{\sqrt{2t}}\big)-\operatorname{erfc}\big(\tfrac{x+1}{\sqrt{2t}}\big)
\big]$; inserting
$\operatorname{erfc}(u)=\tfrac{e^{-u^2}}{u\sqrt{\pi}}\big(1-
\tfrac1{2u^2}+O(u^{-4})\big)$ with $u=\tfrac{x-1}{\sqrt{2t}}$ gives
the expansion of $\log q_t$; the second term is smaller by a factor
$e^{-2x/t}$. In \eqref{eq:images} the term $q_t(x)$ dominates: the
nearest competitor $q_t(4-x)$ has support-edge distance $3-x$, and
$(3-x)^2-(x-1)^2=8-4x\ge8-4\max K>0$, so its ratio to $q_t(x)$ is
$O(e^{-(8-4x)/2t})$; remaining images are farther by at least
distance $3$. Differentiating in $x$, the exponentially small
corrections are absorbed in the errors, and
$v=-\tfrac12\partial_x\log p_t$.
\end{proof}

\begin{theorem}[Failure of transport at the data endpoint]
\label{thm:datafail}
Let $v=-\tfrac12\partial_x\log p_t$ with $p_t$ as in
\eqref{eq:images}, and let $V:=\{1<|x|<2\}$.
\begin{itemize}
\item[(i)] For every compact $K\subset V$,
$\int_0^T\!\int_K|v|\dd x\dd t=+\infty$: the interior part of
\textup{(F2)} fails. The collar conditions fail as well: the $1/t$
blow-up persists up to the walls, and near $x=\pm2$ one has
$[\partial_x^2\log p_t]^+\asymp t^{-2}$, so \textup{(F3)} also fails
there as $t\downarrow0$. Hypotheses \textup{(F0)--(F1)} hold, and
tangency \textup{(F4)} holds for every $t>0$.
\item[(ii)] For every $x_0\in V$ there is no absolutely continuous
$Z:[0,T]\to[-2,2]$ with $Z(0)=x_0$ and
$Z(t)=x_0+\int_0^tv(Z(s),s)\dd s$. Since $\LL{1}(V)>0$, no regular
Lagrangian flow on $\bar\Omega$ exists from time $0$.
\item[(iii)] By contrast, $Z\equiv2$ and $Z\equiv-2$ \emph{are}
integral curves from the boundary points; for every $x_0\in(-1,1)$
the quantile flow $Z(t,x_0)=F_t^{-1}(F_0(x_0))$ is the unique
integral curve, with $\partial_xZ(t,x)=p_0(x)/p_t(Z(t,x))$, and the
transport identity $Z(t,\cdot)_\#(p_0\LL{1})=p_t\LL{1}$ holds.
\item[(iv)] On $[\delta,T]$, \ref{cor:early} applies, with
constants of order $\delta^{-1}$.
\end{itemize}
\end{theorem}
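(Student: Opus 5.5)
The plan is to treat the four parts in the order (i), (iii), (ii), (iv), since (ii) relies on the monotonicity furnished by the quantile identity of Lemma \ref{lem:quantile}.

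\emph{Part (i).} By Lemma \ref{lem:asymp}, $|v(x,t)|\ge \frac{x-1}{4t}$ on a compact $K\subset(1,2)$ once $t\le t_K$. Hence
\[
\int_0^T\!\int_K|v|\dd x\dd t\ \ge\ c_K\int_0^{t_K}\frac{\dd t}{t}=+\infty,
\]
and the same holds on $(-2,-1)$ by symmetry. For the walls I would redo the image computation at $x=2$. There $p_t(2)\approx 2q_t(2)$, and differentiating \eqref{eq:images} gives $\partial_x^2\log p_t(2)=\partial_x^2p_t(2)/p_t(2)$ because $\partial_xp_t(2)=0$. The ratio $q_t''/q_t$ at distance $1$ from the support edge behaves like $(x-1)^2/t^2\asymp t^{-2}$, with a positive sign since the Gaussian tail is convex there. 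This gives $[\partial_x^2\log p_t]^+\asymp t^{-2}$, so $[\dive v]^-=\frac12[\partial_x^2\log p_t]^+$ is not in $L^1_tL^\infty$, and the collar BV norm likewise diverges. (F0) follows because $p_t$ is the no-flux solution, as established above. (F1) holds by strict positivity, and (F4) holds by the Neumann condition.

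\emph{Part (iii).} Since $v(\pm2,t)=0$, the constants $Z\equiv\pm2$ solve the ODE. For $x_0\in(-1,1)$ I would take $Z(t,x_0)=F_t^{-1}(F_0(x_0))$. Its level $F_0(x_0)\in(0,1)$ keeps $Z$ strictly inside, and $F_t\to F_0$ uniformly gives continuity at $t=0$. Differentiating $F_t(Z_t)=$ const, using $\partial_tF=-pv$ from Lemma \ref{lem:quantile}, shows that $Z$ is an integral curve. Uniqueness follows from the converse part of the same lemma: any absolutely continuous solution preserves $F_t(Z_t)$, and $F_t$ is strictly increasing. The transport identity is the push-forward of $p_0\LL{1}$ under the monotone rearrangement. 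The Jacobian formula comes from differentiating $F_t(Z(t,x))=F_0(x)$ in $x$.

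\emph{Part (ii).} This is the main obstacle, and I would argue by exclusion. Suppose $Z$ is an absolutely continuous solution with $Z(0)=x_0\in(1,2)$. By Lemma \ref{lem:quantile}, $F_t(Z_t)$ is constant on $(0,T]$. Let $s$ be this constant. Since $F_t\to F_0$ uniformly and $Z$ is continuous at $0$, we get $s=F_0(x_0)=1$. But $F_t(y)<1$ for every $y<2$ and $t>0$, because $p_t>0$. Hence $Z_t=2$ for all $t>0$, which contradicts continuity at $t=0$ since $x_0<2$.

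A subtle point is that Lemma \ref{lem:quantile} requires integrability of $v$ along the curve near $t=0$. I would apply it on $[\epsilon,t]$ and pass to $\epsilon\downarrow0$, where continuity of $(t,y)\mapsto F_t(y)$ on $[0,T]\times[-2,2]$ suffices. This is the ``instantaneous jump'' to the wall mentioned in the introduction. Because $\LL{1}(V)>0$, condition (i) of Definition \ref{def:rlf} fails on a set of positive measure, so no regular Lagrangian flow exists from time $0$.

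\emph{Part (iv).} This is Corollary \ref{cor:vpve} in the (VE) case with $\sigma\equiv1$. The $\delta^{-1}$ scaling of the constants can be read off from Lemma \ref{lem:asymp}, where $v\sim\frac{x-1}{2t}$, together with parabolic scaling of the Schauder bounds.
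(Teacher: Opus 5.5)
Your parts (i)--(iii) follow the paper's proof essentially step for step. These are the $\int_0 dt/t$ divergence from Lemma~\ref{lem:asymp}, and the two dominant images at $x=2$ giving $\partial_x^2\log p_t(2)=q_t''(2)/q_t(2)\,(1+o(1))\asymp t^{-2}$. They also include the first-integral exclusion forcing $F_t(Z_t)\equiv 1$, hence $Z_t=2$ for $t>0$, and the quantile flow with uniqueness pinned by the first integral. The case $x_0\in(-2,-1)$ in (ii), where the level is $0$ and $Z_t=-2$, is symmetric.

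The gap is in the quantitative claim of (iv), that the constants are of order $\delta^{-1}$; neither ingredient you cite delivers it.

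\emph{Lemma~\ref{lem:asymp}.} It is an asymptotic statement on compacts of $(1,2)$. It shows that $|v|$ is \emph{at least} of order $t^{-1}$ there, so the rate cannot be improved. It gives no upper bound on all of $[-2,2]$, in particular not near $x=\pm1$ or $x=\pm2$.

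\emph{The Schauder route.} The route of Corollary~\ref{cor:early} controls $p$ and its derivatives, not $\log p$. To bound $v=-\tfrac12\partial_x p_t/p_t$ and $\partial_x v$ you must divide by $c_\delta=\inf_{[-2,2]\times[\delta,T]}p$. Here that infimum is attained at the walls, at distance $1$ from $\operatorname{supp}p_0$, so $c_\delta\approx C\sqrt{\delta}\,e^{-1/(2\delta)}$. Parabolic scaling of the Schauder norms therefore produces constants exponential in $1/\delta$, not of order $\delta^{-1}$.

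\emph{The missing idea.} You need a direct estimate of the score that never uses a lower bound on $p$. The paper gets it from the Tweedie identity for the whole-line heat flow of the $8$-periodic even extension $\tilde p_0$ underlying \eqref{eq:images}. For each $x$, let $Y$ be a random variable whose density is proportional to $\tilde p_0(y)\varphi_t(x-y)$; then $\partial_x\log p_t(x)=t^{-1}\big(\mathbb{E}[Y\mid x]-x\big)$. Every $x\in[-2,2]$ lies within distance $1$ of $\operatorname{supp}\tilde p_0$, and farther mass is Gaussian-suppressed. So the conditional displacement is bounded uniformly in $t\le T$, whence $|v|\le C/\delta$ on $[\delta,T]$. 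The companion formula $\partial_x^2\log p_t=t^{-2}\operatorname{Var}(Y\mid x)-t^{-1}$ gives $\|\partial_x v\|_\infty\le C/\delta^2$.

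The qualitative part of (iv), that Corollary~\ref{cor:early} applies on $[\delta,T]$, is fine as you state it.
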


\begin{proof}
(i) is Lemma \ref{lem:asymp} together with
$\int_0^{t_0}\!dt/t=\infty$; positivity of $p_t$ gives (F1); the
claim on $[\partial_x^2\log p_t]^+$ near the walls follows from the
fact that at $x=2$ the two dominant images make $p_t$ even about $2$
with a local minimum there, and $\partial_x^2\log
p_t(2)=p_t''(2)/p_t(2)=q_t''(2)/q_t(2)\,(1+o(1))\asymp t^{-2}$;
tangency at the walls is the Neumann symmetry noted above.

(ii) Set $F_t(x):=\int_{-2}^xp_t$. The flux vanishes at $x=-2$, so
Lemma \ref{lem:quantile} applies on $(-2,2)$, and $t\mapsto
F_t(Z(t))$ is constant along any integral curve on $(0,T]$, where
$v$ is smooth. The new feature of the bounded domain: $F_t:[-2,2]\to
[0,1]$ is \emph{onto}, and for $t>0$ strictly increasing, so the
value $1$ \emph{is} attained --- exactly at $x=2$. Let $Z$ be an
integral curve with $Z(0)=x_0\in(1,2)$. Since
$\|F_t-F_0\|_\infty\le\|p_t-p_0\|_{L^1}\to0$ and $Z$ is continuous
at $0$,
\[
F_t(Z(t))\equiv q:=\lim_{t\downarrow0}F_t(Z(t))=F_0(x_0)=1 .
\]
For $t>0$, $F_t(Z(t))=1$ forces $Z(t)=2$; continuity at $t=0$ then
gives $x_0=2$, a contradiction: an integral curve from the vacuum
would have to jump instantaneously to the reflecting wall. The case
$x_0\in(-2,-1)$ is symmetric.

(iii) $Z\equiv2$ satisfies the integral equation since $v(2,t)=0$
for $t>0$ and $\int_0^t|v(2,s)|\dd s=0$. For $x_0\in(-1,1)$:
$q:=F_0(x_0)\in(0,1)$, $Z(t,x_0):=F_t^{-1}(q)$ is well defined,
$\dot Z=v(Z,t)$ by Lemma \ref{lem:quantile}, $Z(t,x_0)\to x_0$ as
$t\downarrow0$ by uniform convergence of $F_t$ and strict
monotonicity, uniqueness holds because $v$ is locally Lipschitz on
$(-2,2)\times(0,T]$ and the first integral pins the curve;
differentiating $F_t(Z)=F_0(x)$ gives the Jacobian identity and the
transport identity. The image corrections in \eqref{eq:images} are
$O(e^{-c/t})$ on the relevant compacts and affect no step.

(iv) is \ref{cor:early}; the Tweedie identity applied to
the periodised initial law gives $|v|\le C/\delta$ and
$\|\partial_xv\|_\infty\le C/\delta^2$ on $[\delta,T]$.
\end{proof}

\begin{remark}[The boundary is innocent]\label{rem:innocent}
The only genuinely boundary-related hypothesis of Theorem
\ref{thm:A} --- tangency --- holds here at every positive time, and
the boundary points carry honest integral curves. What fails is
interior time-integrability, through the $1/t$ blow-up of the score
in the vacuum: the same data mechanism as in the whole-space
counterexample of \cite{pfode}. The comparison is instructive: on
$\R$, trajectories from the vacuum are excluded because the extreme
quantile level is never attained; on $[-2,2]$ it \emph{is} attained,
at the wall, and trajectories are excluded because reaching it would
violate continuity. Reflection changes the mechanism of the
contradiction but not the conclusion.
\end{remark}

%=====================================================================
\section{Rigidity: constraints on boundary mechanisms}
\label{sec:rigidity}
%=====================================================================

Three rigidity results delimit the possible failures of deterministic
confined transport, and show that under their hypotheses the
structure realised in Theorem \ref{thm:B} is forced. They do not
amount to a classification: they exclude the failure of invariance
in one dimension, the absence of confined characteristics whenever
the flux is integrable, and the failure of tangency wherever the
velocity is bounded with a nondegenerate boundary vacuum. Outside
these hypotheses --- in particular for low-regularity fields in
higher dimension --- other boundary mechanisms are not excluded.
%see Open problem \ref{op:invariance}.

\begin{proposition}[One-dimensional rigidity]\label{prop:1d}
Let $\Omega=(a,b)$ and let $(p_t)$ be a no-flux density evolution
with $p_t>0$ a.e.\ in $\Omega$ for \emph{every} $t\in[0,T]$,
$p_tv\in L^1_{\mathrm{loc}}$, and $t\mapsto p_t\LL{1}$ narrowly
continuous. Then the quantile flow $Z(t,x):=F_t^{-1}(F_0(x))$ is
well defined for a.e.\ $x$, takes values in $[a,b]$, transports
$p_0\LL{1}$ onto $p_t\LL{1}$, and its trajectories are integral
curves of $v$ wherever Lemma \ref{lem:quantile} applies. Invariance
can never fail in $d=1$: mass conservation and no-flux pin
$F_t(a)=0$, $F_t(b)=1$, and every quantile level $q\in(0,1)$ is
attained inside $[a,b]$ at every time. The only Lagrangian
pathologies available in one dimension are those of Theorem
\ref{thm:datafail} --- non-existence of curves from an initial
vacuum, or unbounded compression where $p_t$ degenerates --- never
an exit through the boundary.
\end{proposition}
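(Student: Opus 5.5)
The plan is to build everything from the distribution functions $F_t(x):=\int_a^x p_t$ and the generalised inverse $F_t^{-1}(q):=\inf\{x\in[a,b]:F_t(x)\ge q\}$.

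First, I will record the structural facts. Mass conservation $\int_a^b p_t=1$ and $p_t\ge0$ give $F_t(a)=0$, $F_t(b)=1$, and $F_t$ continuous and nondecreasing. Because $p_t>0$ a.e. in $(a,b)$ for every $t$, $F_t$ is strictly increasing. Hence $F_t:[a,b]\to[0,1]$ is a homeomorphism, and $F_t^{-1}$ is its genuine inverse, with values in $[a,b]$ for every level $q\in[0,1]$. In particular $Z(t,x):=F_t^{-1}(F_0(x))$ is defined for every $x\in[a,b]$, not just a.e., and lies in $[a,b]$. This is exactly the invariance claim, and it also gives the contrast with $\R$: the levels $0,1$ are attained only at the walls, and interior levels $q\in(0,1)$ are attained strictly inside $(a,b)$.

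Second, I will prove transport by the standard quantile argument. Under $p_0\LL{1}$, the variable $F_0(X)$ is uniform on $(0,1)$, because $F_0$ is continuous. Applying $F_t^{-1}$ to a uniform variable produces law $p_t\LL{1}$, since $\{F_t^{-1}(U)\le y\}=\{U\le F_t(y)\}$. Therefore $Z(t,\cdot)_\#(p_0\LL{1})=p_t\LL{1}$ for every $t$. The only regularity needed here is continuity and strict monotonicity of $F_t$.

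Third, I will check the ODE property. The no-flux condition enters through the weak formulation (NF). Testing (NF) with $\varphi(x,t)=\mathbf 1_{(a,x]}$ after mollification, and using narrow continuity, gives $F_t(x)-F_0(x)=-\int_0^t (p_sv)(x,s)\,ds$ for a.e. $x$. This is the integrated form of $\partial_tF=-pv$ with zero flux at $a$, so Lemma \ref{lem:quantile} applies. Where its hypotheses hold (differentiability of $F_t$ at $Z_t$ with $p_t(Z_t)>0$), differentiating $F_t(Z_t)=F_0(x)$ gives $\dot Z_t=v(Z_t,t)$. I will state this as holding "wherever Lemma \ref{lem:quantile} applies", exactly as the statement hedges.

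The main obstacle is the last sentence, which classifies the remaining pathologies. I plan to make it precise as follows. Exit from $[a,b]$ is impossible by the first step. Any failure must therefore be a failure of $Z$ to be absolutely continuous, or a failure of the compressibility bound. The Jacobian identity $\partial_xZ=p_0(x)/p_t(Z)$, obtained from $F_t(Z)=F_0$, shows that compressibility fails only where $p_t$ degenerates relative to $p_0$. If instead $p_0$ vanishes on an interval, $F_0$ is constant there, and we are in the Theorem \ref{thm:datafail} mechanism with no curves from the vacuum. That case is excluded here by positivity at $t=0$, so only degeneration at positive times remains.
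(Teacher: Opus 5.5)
Your core argument is correct and is essentially the paper's proof written out in full. The paper only notes that $F_t:[a,b]\to[0,1]$ is continuous, onto and strictly increasing, and then appeals to Lemma \ref{lem:quantile} and narrow continuity. Your uniform-variable argument for the transport identity is a useful precision, and it uses no equation at all, only that $F_0$ and $F_t$ are homeomorphisms. So is your derivation of $F_t(x)-F_0(x)=-\int_0^t J(x,s)\dd s$ from (NF), using test functions equal to $1$ near $a$.

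The final paragraph, however, gets the compression mechanism backwards. From $\partial_xZ(t,x)=p_0(x)/p_t(Z(t,x))$, the push-forward $Z(t,\cdot)_\#\LL{1}$ has density $p_t(y)/p_0(Z(t,\cdot)^{-1}(y))$. So the bound in Definition \ref{def:rlf}(ii) fails where the density at the \emph{starting} point is small compared with the density at the image point. Degeneration of $p_t$ relative to $p_0$ produces expansion, which the one-sided bound does not penalise.

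Consequently your conclusion that, once $p_0>0$ a.e., ``only degeneration at positive times remains'' is false. Take the Neumann heat flow ($f\equiv0$, $\sigma\equiv1$) on $(0,1)$ with $p_0(x)=6x(1-x)$.
\begin{itemize}
\item All the hypotheses hold, and for every $t>0$ the density $p_t$ is bounded below by a positive constant on $[0,1]$.
\item Near $0$ one has $F_0(x)\approx 3x^2$ and $F_t(y)\approx p_t(0)\,y$, so $\partial_xZ(t,x)\approx 6x/p_t(0)\to0$ as $x\downarrow0$, for every $t>0$.
\item Hence no compressibility constant exists, even though $p_t$ never degenerates at positive times.
\end{itemize}
The pathology is degeneracy of $p_0$ at the wall relative to $p_t$. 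This is how the paper's loose phrase ``unbounded compression where $p_t$ degenerates'' should be read, with $t=0$ allowed. It is also exactly the configuration of Theorem \ref{thm:B}(iv), where $\rho_{t_*}(y)=p(y)/p(z)$ is large because $p(z)$ is small at the near-wall starting point $z$.
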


\begin{proof}
$F_t:[a,b]\to[0,1]$ is continuous, nondecreasing, onto by mass
conservation and no-flux, and strictly increasing since $p_t>0$
a.e.; the rest follows from Lemma \ref{lem:quantile}, whose
flux-vanishing hypothesis at $a$ is the no-flux condition, and from
narrow continuity.
\end{proof}

\begin{proposition}[Superposition confinement]\label{prop:conf}
Let $\Omega$ be a bounded Lipschitz domain and $(p,J)$ a no-flux
weak solution with $J\in L^1([0,T]\times\Omega)$, and assume that
the flux vanishes on the vacuum,
\begin{equation}\label{eq:novacuumflux}
J=0 \quad \LL{d+1}\text{-a.e.\ on }\{p=0\},
\end{equation}
setting $v:=J/p$ there $($say $v:=0)$; \eqref{eq:novacuumflux} holds
in particular whenever $p_t>0$ a.e.\ for a.e.\ $t$, as under
\textup{(F1)}. Then there exists
$\eta\in\PP\big(C([0,T];\R^d)\big)$ concentrated on integral curves
of $v$ which remain in $\bar\Omega$ for all $t$, with
$(e_t)_\#\eta=p_t\Ld$ for every $t$.
\end{proposition}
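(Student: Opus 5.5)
The plan is to reduce to the whole-space superposition principle, Theorem \ref{thm:superposition}, applied to the zero extensions, and then to prove confinement by the same freezing argument as in Step 2 of the proof of Theorem \ref{thm:A}.

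First I set up the whole-space equation. Put $\mu_t:=p_t\mathbf{1}_\Omega\Ld$ and $w:=v\mathbf{1}_\Omega$, with $v:=0$ on $\{p=0\}$. By \eqref{eq:novacuumflux} we have $J=p\,v$ a.e.\ on $\Omega\times(0,T)$, and so $w\mu_t=J\mathbf{1}_\Omega\Ld$. Take a test function $\varphi\in C_c^\infty(\R^d\times[0,T))$. Its restriction lies in $C^\infty(\bar\Omega\times[0,T))$, so (NF) applies to it. This gives exactly the distributional continuity equation $\partial_t\mu_t+\dive(w\mu_t)=0$ on $\R^d$ with initial datum $\mu_0$; as in Step 3 of the proof of Theorem \ref{thm:A}, no boundary term arises. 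Narrow continuity is part of Definition \ref{def:nfweak}. Since $|x|$ is bounded on $\bar\Omega$, the integrability hypothesis reduces to $\int_0^T\!\int|w|\,d\mu_t\,dt\le\|J\|_{L^1}<\infty$. Theorem \ref{thm:superposition} therefore supplies $\eta$, concentrated on absolutely continuous integral curves of $w$, with $(e_t)_\#\eta=\mu_t$.

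Second, I show the curves stay in $\bar\Omega$ and solve the ODE for $v$. Since $(e_0)_\#\eta=\mu_0$ is carried by $\Omega$, $\eta$-a.e.\ curve starts in $\Omega$. For such a curve, the set $O=\{t:\gamma(t)\notin\bar\Omega\}$ is open, and $w\equiv0$ off $\bar\Omega$. Hence $\gamma$ is constant on each component $(a,b)$ of $O$, with value $\xi\notin\bar\Omega$. By continuity $\gamma(a)=\xi$, which forces $a=0$, and that contradicts $\gamma(0)\in\Omega$. So $O=\emptyset$.

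Third, I replace $w$ by $v$. We need $w(\gamma(t),t)=v(\gamma(t),t)$ for a.e.\ $t$ along $\eta$-a.e.\ curve, and these can differ only where $\gamma(t)\in\partial\Omega$. By Fubini,
\[
\int\!\!\int_0^T\mathbf{1}_{\partial\Omega}(\gamma(t))\,dt\,d\eta=\int_0^T\mu_t(\partial\Omega)\,dt=0,
\]
because $\mu_t\ll\Ld$ and $\Ld(\partial\Omega)=0$ for Lipschitz $\partial\Omega$. Thus $\eta$-a.e.\ curve spends null time on $\partial\Omega$, and the integral identity holds with $v$ in place of $w$. This is what "integral curves of $v$" means here.

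The main obstacle is the definition of $v$ on the vacuum. Along curves, $w$ is only determined up to $\mu_t$-null sets, and the superposition theorem works with the Borel representative. I would therefore fix Borel representatives of $J$ and $p$ at the outset. The identity $J=pw$ then holds $\LL{d+1}$-a.e., which is all the distributional equation needs. The interpretation of "integral curve of $v$" is then understood with respect to this fixed representative, since any change on an $\LL{d+1}$-null set is invisible to $\eta$ by the Fubini argument above.
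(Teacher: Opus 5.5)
Your proof is correct. Its backbone is exactly the paper's: the zero extension, (NF) read as the global continuity equation, the bound $\int_0^T\!\int\frac{|w|}{1+|x|}\,d\mu_t\,dt\le\|J\|_{L^1}$ obtained from $p|v|=|J|$, and then Theorem \ref{thm:superposition}.

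Where you differ is the confinement step. The paper does not use the dynamics at all. Since $(e_t)_\#\eta=p_t\Ld$ is carried by $\bar\Omega$ for each $t$, $\eta$-a.e.\ curve lies in $\bar\Omega$ at every rational time. It therefore lies there at every time, because the curves are continuous and $\bar\Omega$ is closed.

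Your freezing argument, borrowed from Step 2 of Theorem \ref{thm:A}, is also valid, but it needs more input. You must know that $\eta$-a.e.\ curve starts in $\Omega$. You must also know that the ODE holds along curves for the particular representative $w=v\mathbf{1}_\Omega$, which vanishes identically off $\Omega$. That second requirement is why you had to worry about representatives at all. The marginal argument is shorter and does not care how the field is defined off $\bar\Omega$.

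On the other hand, your Fubini step,
$\int\!\int_0^T\mathbf{1}_{\partial\Omega}(\gamma(t))\,dt\,d\eta=\int_0^T\mu_t(\partial\Omega)\,dt=0$,
supplies something the paper's proof leaves implicit. It shows the curves are integral curves of $v$ itself, which is defined only in $\Omega$, rather than of its zero extension. It is the superposition analogue of the compressibility argument in Step 2 of Theorem \ref{thm:A}, and it needs only $\mu_t\ll\Ld$ and $\Ld(\partial\Omega)=0$, with no compression bound.
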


\begin{proof}
As in Step 3 of the proof of Theorem \ref{thm:A}, the zero
extensions solve the continuity equation on $\R^d$. Hypothesis
\eqref{eq:novacuumflux} gives $p|v|=|J|$ $\LL{d+1}$-a.e.\ --- on
$\{p>0\}$ by definition of $v$, and on $\{p=0\}$ because both sides
vanish --- so the integrability hypothesis of Theorem
\ref{thm:superposition} reads
$\int_0^T\!\int\frac{|v|}{1+|x|}p\dd x\dd t\le
\int_0^T\!\int_\Omega|J|<\infty$. The superposition $\eta$ has
time-marginals carried by $\bar\Omega$, and curves being continuous,
$\eta$-a.e.\ curve lies in $\bar\Omega$ for all rational, hence
all, times.
\end{proof}

\begin{remark}\label{rem:vacuumflux}
Hypothesis \eqref{eq:novacuumflux} cannot be omitted: Definition
\ref{def:nfweak} permits $p_t$ to vanish on a set of positive
measure, where $v=J/p$ is undefined and the identity $p|v|=|J|$
--- the only point at which $J\in L^1$ is converted into the
weighted bound required by Theorem \ref{thm:superposition} ---
has no meaning. In all applications made here the hypothesis is
automatic, the density being strictly positive in the interior.
\end{remark}

Thus, whenever the total flux is integrable, invariant
characteristics exist from $p_0$-a.e.\ starting point: a
counterexample can only target the \emph{flow} structure ---
uniqueness, measurable selection, or compressibility --- never the
existence of confined integral curves. This is precisely the
situation of Theorem \ref{thm:B}.

\begin{lemma}[Taylor rigidity at a boundary vacuum]\label{lem:taylor}
Let $\Gamma\subset\partial\Omega$ be a relatively open $C^1$
boundary patch, $U$ a neighbourhood of $\Gamma$, and suppose that on
$(\Omega\cap U)\times(t_1,t_2)$: $J\in C^1$ up to $\Gamma$; $p\in
C^1$ up to $\Gamma$ with $p|_\Gamma\equiv0$ and
$p(x)\ge c\operatorname{dist}(x,\Gamma)$; and $v=J/p\in L^\infty$.
If the no-flux condition holds on $\Gamma$, then
$v(x,t)\cdot\nu\to0$ as $x\to\Gamma$, locally uniformly: tangency
cannot fail where the velocity is bounded.
\end{lemma}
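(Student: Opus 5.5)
The plan is a first-order Taylor expansion of the normal flux $J\cdot\nu$ away from $\Gamma$, compared with the linear vanishing of $p$. Fix a compact subpatch $\Gamma'\Subset\Gamma$ and a compact time interval $[s_1,s_2]\subset(t_1,t_2)$. Let $x$ be close to $\Gamma'$ and let $y\in\Gamma$ be a nearest boundary point, so that $r:=|x-y|=\operatorname{dist}(x,\Gamma)$. Since $\Gamma$ is $C^1$, the unit normal $\nu$ extends continuously to a neighbourhood of $\Gamma'$; we evaluate it at $y$. Because $J\in C^1$ up to $\Gamma$ and $J\cdot\nu(y)=0$ by the no-flux condition,
\[
J(x,t)\cdot\nu(y)=\big(J(x,t)-J(y,t)\big)\cdot\nu(y)=O(r),
\]
uniformly in $x$ and $t$ on the compact set. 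This is only an $O(r)$ bound. Dividing by $p\ge c\,r$ would give $v\cdot\nu=O(1)$, which is not yet the limit we want. So the argument must use the structure more carefully.

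The refinement is to split $J=pv$ and use the boundedness of $v$. The derivative of $J$ up to the wall has to be compatible with $J=pv$ and $p|_\Gamma=0$. Write $\partial_n$ for the inward normal derivative at $y$, so $\partial_n p(y)=\lim_{r\to0}p(y-r\nu)/r\ge c$. Along the segment $x_s=y-s\nu(y)$ we have
\[
J(x_s)\cdot\nu(y)=\int_0^s \partial_n\big(J\cdot\nu(y)\big)(x_\tau)\,d\tau,
\qquad
p(x_s)=\int_0^s\partial_n p(x_\tau)\,d\tau .
\]
Hence
\[
v(x_s)\cdot\nu(y)=\frac{\frac1s\int_0^s\partial_n(J\cdot\nu(y))}{\frac1s\int_0^s\partial_np}\longrightarrow\frac{\partial_n(J\cdot\nu)(y)}{\partial_np(y)}
\]
as $s\to0$, uniformly by uniform continuity of the derivatives. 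It therefore suffices to show $\partial_n(J\cdot\nu)(y,t)=0$ on $\Gamma$.

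For this step use the continuity equation together with tangential information. Along $\Gamma$, $p\equiv0$ for all $t$, so $\partial_tp=0$ on $\Gamma$. The equation $\partial_tp+\dive J=0$ holds up to $\Gamma$ by continuity of the $C^1$ quantities, so $\dive J=0$ on $\Gamma$. Now decompose $\dive J$ at $y$ into the normal derivative of the normal component plus tangential derivatives. Because $J\cdot\nu\equiv0$ on the $C^1$ patch, the tangential derivatives of $J\cdot\nu$ along $\Gamma$ vanish. There remain the tangential derivatives of the tangential part $J_\tau$ of $J$ on $\Gamma$, and this is where boundedness of $v$ enters. Since $J=pv$ with $|v|\le\|v\|_\infty$ and $p\to0$ at $\Gamma$, we get $|J(x)|\le\|v\|_\infty p(x)\to0$, so $J\equiv0$ on $\Gamma$. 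Then $J_\tau$ vanishes identically on $\Gamma$, all its tangential derivatives vanish, and $\dive J=0$ at $y$ reduces to $\partial_n(J\cdot\nu)(y)=0$, up to the curvature terms coming from differentiating $\nu$, which multiply $J(y)=0$.

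In fact $J|_\Gamma=0$ makes the full conclusion almost immediate. With $J\in C^1$ and $J|_\Gamma=0$, we have $J(x)=DJ(y)(x-y)+o(r)=-r\,DJ(y)\nu(y)+o(r)$. The vector $DJ(y)\nu(y)$ is the normal derivative $\partial_\nu J$, and its normal component $\partial_\nu J\cdot\nu$ equals $\dive J(y)$ minus tangential derivatives of $J$ along $\Gamma$, which vanish. By the continuity equation, $\dive J(y)=-\partial_tp(y)=0$. Hence $J(x)\cdot\nu(y)=o(r)$, and dividing by $p(x)\ge cr$ gives $v(x)\cdot\nu(y)\to0$ locally uniformly; continuity of $\nu$ then handles the replacement of $\nu(y)$ by $\nu$ near $\Gamma$.

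The main obstacle I expect is making the tangential-derivative step rigorous on a patch that is only $C^1$. There, "tangential derivatives along $\Gamma$" require a $C^1$ parametrization of $\Gamma$ and differentiability of $J\circ\phi$. The $o(r)$ remainder must also be uniform, which follows from uniform continuity of $DJ$ on compact subsets of $\bar\Omega\cap U$ times $[s_1,s_2]$. I would handle this by flattening $\Gamma$ with a $C^1$ chart and computing $\dive J$ through the chain rule at boundary points.
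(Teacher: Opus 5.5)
Your proposal is correct and is essentially the paper's argument. Boundedness of $v$ forces $J|_\Gamma=0$, so all tangential derivatives of $J$ vanish on $\Gamma$; $p|_\Gamma\equiv0$ and the continuity equation give $\dive J=-\partial_tp=0$ there, whence $\nu^{\top}(\nabla J)\nu=0$, $J\cdot\nu=o(\operatorname{dist}(x,\Gamma))$ by a uniform Taylor expansion from the nearest boundary point, and $p\ge c\operatorname{dist}(x,\Gamma)$ finishes; you may drop the preliminary normal-ray computation and the chart flattening, since $(\nabla J)(y)\tau=0$ for every tangent vector $\tau$ already yields $\nu^{\top}(\nabla J)\nu=\tr(\nabla J)$ by pointwise linear algebra.
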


\begin{proof}
On $\Gamma$: $J\cdot\nu=0$ (no-flux) and
$|J_\tau|=p|v_\tau|\le\|v\|_\infty p\to0$, so $J|_\Gamma=0$; since
$J\in C^1$ and $J_\tau$ vanishes identically on $\Gamma$, all
tangential derivatives of $J_\tau$ vanish there, i.e.\
$\nabla_\tau\!\cdot J_\tau|_\Gamma=0$. Moreover $p|_\Gamma\equiv0$
on a time interval gives $\partial_tp|_\Gamma=0$, and the continuity
equation yields
$\partial_\nu(J\cdot\nu)|_\Gamma=-\partial_tp|_\Gamma-
\nabla_\tau\!\cdot J_\tau|_\Gamma=0$. Hence
$J\cdot\nu=o(\operatorname{dist}(x,\Gamma))$, and the nondegeneracy
of $p$ gives $v\cdot\nu\to0$.
\end{proof}

Lemma \ref{lem:taylor} forces the structure \eqref{eq:bcstructure} of
any boundary-caused example: the tangential flux must \emph{not}
vanish at the wall. The price is unavoidable, and quantifies the
entanglement of the boundary hypotheses of Theorem \ref{thm:A}.

\begin{corollary}[Failure of tangency forces non-integrability]
\label{cor:entangled}
Let $\Gamma\subset\partial\Omega$ be a relatively open $C^{1,1}$ patch,
let $p$ and $J$ be of class $C^1$ in space and time up to $\Gamma$
on a time interval, with $J\cdot\nu=0$ on $\Gamma$, and suppose $p$
vanishes on $\Gamma$ nondegenerately:
$p(x)\ge c\operatorname{dist}(x,\Gamma)$ with $c>0$. Let
$x_0\in\Gamma$ and $\nu_0:=\nu(x_0)$.
Fix $t_0$ 
 in the stated time interval. If
\[
v(x,t_0)\cdot\nu_0\ \not\longrightarrow\ 0
\qquad\text{as }\Omega\ni x\to x_0
\]
\emph{(}the approach being unrestricted\emph{)}, then $v(.,t_0)\notin L^1$
on any spatial collar of $x_0$.
\end{corollary}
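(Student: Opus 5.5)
The proof goes by contraposition, reusing the Taylor argument of Lemma \ref{lem:taylor} with $v\in L^\infty$ weakened to $v\in L^1$ on a collar. Suppose $v(\cdot,t_0)\in L^1(\Omega\cap B_r(x_0))$ for some $r>0$. We show that $v(x,t_0)\cdot\nu_0\to0$ as $x\to x_0$.

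\emph{Reduction to the tangential flux.} Straighten the patch locally: since $\Gamma$ is $C^{1,1}$, near $x_0$ write $x=y+s\,\nu_{\mathrm{in}}(y)$ with $y\in\Gamma$ and $s=\operatorname{dist}(x,\Gamma)\in(0,s_0)$. This is a bi-Lipschitz parametrisation with Jacobian bounded above and below. Write $J=J_\nu\nu+J_\tau$ with respect to $\nu(y)$, extended along the normals. Because $J$ is $C^1$ up to $\Gamma$, we have $J_\tau(y+s\nu_{\mathrm{in}},t_0)=J_\tau(y,t_0)+O(s)$. The upper bound $p\le\|\nabla p\|_\infty\, s$ holds because $p|_\Gamma=0$ and $p\in C^1$. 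Hence on the set where $|J_\tau(y,t_0)|\ge\varepsilon$ and $s$ is small,
\[
|v|\ \ge\ \frac{|J_\tau|}{p}\ \ge\ \frac{\varepsilon/2}{C s}.
\]
In the $(y,s)$ coordinates, $\int_0^{s_0}ds/s=\infty$. So $v\in L^1$ on the collar forces $J_\tau(\cdot,t_0)=0$ on $\Gamma\cap B_r(x_0)$, except possibly on a set $\{|J_\tau|\ge\varepsilon\}$ of $\Hd$-measure zero for every $\varepsilon$. By continuity that exceptional set is relatively open, hence empty. Combined with the no-flux condition $J_\nu=0$, this gives $J(\cdot,t_0)\equiv0$ on $\Gamma$ near $x_0$.

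\emph{Taylor step.} Now argue as in Lemma \ref{lem:taylor}. Since $J_\tau(\cdot,t_0)$ vanishes identically on the patch and is $C^1$, its tangential divergence vanishes there. Since $p\equiv0$ on $\Gamma$ throughout the time interval and $p$ is $C^1$ in time, $\partial_tp=0$ on $\Gamma$. The continuity equation, evaluated classically up to $\Gamma$, then gives $\partial_\nu(J\cdot\nu)=0$ on $\Gamma$ at time $t_0$. The extra normal derivative of $\nu$ produced by the $C^{1,1}$ extension multiplies $J|_\Gamma=0$, so it does not contribute. Hence $J\cdot\nu(y)=o(s)$ uniformly for $y$ near $x_0$, because $J(\cdot,t_0)$ is $C^1$ and its first-order Taylor coefficient vanishes. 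Dividing by $p\ge c s$ gives $v\cdot\nu(y)\to0$. Finally, $\nu(y)\to\nu_0$ and $v\cdot\nu(y)-v\cdot\nu_0=v\cdot(\nu(y)-\nu_0)$. For this difference we need $|v|\,|\nu(y)-\nu_0|\to0$. Now $|v|\le|J|/(cs)=|J-J|_\Gamma|/(cs)=O(1)$, since $J$ vanishes on $\Gamma$ and is Lipschitz. So $v$ is bounded near $x_0$ and the difference tends to $0$. This contradicts the hypothesis, with the approach to $x_0$ unrestricted.

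\emph{Main obstacle.} The delicate step is the first one: turning $L^1$ integrability into pointwise vanishing of $J_\tau$ on $\Gamma$. This requires using only the upper bound $p\lesssim s$, which holds for every $C^1$ function vanishing on $\Gamma$, together with the coarea structure of the $C^{1,1}$ collar, so that the integral becomes $\int_\Gamma\int_0^{s_0}|J_\tau|/(Cs)\,ds\,d\Hd=\infty$ whenever $J_\tau\not\equiv0$. Some care is also needed with the order of quantifiers: we want the conclusion ``not in $L^1$ on \emph{any} collar''. A collar of $x_0$ meets $\Gamma$ in a neighbourhood of $x_0$, and there $J_\tau\not\equiv0$ arbitrarily close to $x_0$ by the contrapositive of the argument above. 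By continuity, $J_\tau$ is then bounded below on some open sub-patch inside the collar, and the logarithmic divergence follows.
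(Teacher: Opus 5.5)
Your argument is correct and is the paper's proof run by contraposition. Your first step (a point of $\Gamma$ near $x_0$ with $J\neq0$ forces $|v|\gtrsim1/\operatorname{dist}$ on $B_r(x_1)\cap\Omega$, using only the upper bound $p\lesssim\operatorname{dist}$) is the paper's Step 4, and your Taylor step ($J\equiv0$ on a sub-patch, $\dive J=-\partial_tp=0$ on $\Gamma$, hence $\nu^{\top}(\nabla J)\nu=0$ and $J\cdot\nu=o(\operatorname{dist})$) is its Step 3, with the paper's dichotomy (A)/(B) being exactly your contrapositive; the differences are cosmetic. You pass from $\nu(\pi(x))$ to $\nu_0$ via boundedness of $v$ rather than letting $\nu_0^{\top}(\nabla J)(\pi(x))\,\nu(\pi(x))\to0$ directly, and there is one small slip: the relatively open set (hence of positive $\Hd$-measure if nonempty) is $\{|J_\tau|>\varepsilon\}$ or $\{J\neq0\}$, not the closed set $\{|J_\tau|\ge\varepsilon\}$.
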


\begin{proof}
We argue invariantly. Fix $x_0\in\Gamma$, write $\nu_0:=\nu(x_0)$ and
let $\{\tau_1,\dots,\tau_{d-1}\}$ be an orthonormal basis of the
tangent space $\nu_0^\perp$; only the \emph{single} vector $\nu_0$
is ever differentiated against.
No derivative of the normal field is required; the $C^{1,1}$
 assumption is used only to obtain the tubular nearest-point projection.
The curved boundary enters only through tangential differentiation
of functions defined on $\bar\Omega$, which for a $C^1$ patch is
well defined without any chart: for $\tau\in\nu_0^\perp$ we set
$\partial_\tau w(x_0):=(w\circ\gamma)'(0)$ for any $C^1$ curve
$\gamma$ in $\Gamma$ with $\gamma(0)=x_0$, $\gamma'(0)=\tau$, which
agrees with the ambient derivative $\tau\cdot\nabla w(x_0)$ whenever
$w$ is $C^1$ up to $\Gamma$ and is therefore independent of the
curve. This is the only property of $\Gamma$ used below, and it is
why no flattening --- which would not preserve the divergence, and
would require second-order regularity of $\partial\Omega$ to
control the error --- is needed.

We also record, for later use, that the $C^1$ regularity of $p$
together with $p|_\Gamma\equiv0$ gives the \emph{upper} bound
$p(x)\le\|\nabla p\|_\infty\operatorname{dist}(x,\partial\Omega)$
near $x_0$, so that the nondegeneracy hypothesis makes
$p\asymp\operatorname{dist}$ there.

\emph{Step 1: $\dive J=0$ on $\Gamma$.} Since $p|_\Gamma\equiv0$
throughout a time interval, $\partial_tp=0$ on $\Gamma$; the
continuity equation holds in $\Omega$ and both of its terms extend
continuously to $\Gamma$, so $\dive J=-\partial_tp=0$ there.

\emph{Step 2:} One of the following holds:
\begin{itemize}
\item[(A)] $J$ vanishes identically on some relatively open
$\Gamma'\subset\Gamma$ containing $x_0$; or
\item[(B)] every relative neighbourhood of $x_0$ in $\Gamma$
contains a point at which $J\neq0$.
\end{itemize}
We show that \textup{(A)} implies $v(x)\cdot\nu_0\to0$ as
$\Omega\ni x\to x_0$, and that \textup{(B)} implies $v\notin L^1$ on
every collar of $x_0$. The corollary follows: if the limit fails,
\textup{(A)} is excluded by the first implication, so \textup{(B)}
holds and the second applies. Arguing through the dichotomy, rather
than through the value of a directional limit, is what allows the
hypothesis to be the unrestricted limit; a computation along the
inward normal ray alone would prove only the weaker statement in
which that particular limit is assumed nonzero.

\emph{Step 3: \textup{(A)} implies tangency at $x_0$.} Let $J\equiv0$
on $\Gamma'$. For $x'\in\Gamma'$ and $\tau$ tangent at $x'$,
differentiating along a $C^1$ curve in $\Gamma'$ gives
$\partial_\tau J(x')=0$; hence, with an orthonormal tangent frame
$\{\tau_i'\}$ at $x'$ and $\nu':=\nu(x')$, Step 1 yields
\begin{equation}\label{eq:entangledlimit}
\nu'^{\top}(\nabla J)(x')\,\nu'
=\operatorname{tr}(\nabla J)(x')-\sum_{i=1}^{d-1}
\tau_i'\cdot\big(\partial_{\tau_i'}J\big)(x')=0 ,
\end{equation}
using $\tau^{\top}(\nabla J)\tau=\tau\cdot\partial_\tau J$. No
curvature term appears here: the second fundamental form would enter
multiplied by $J\cdot\nu$, which vanishes on $\Gamma$. Now let $\pi$
be a nearest-point projection onto $\partial\Omega$, defined for $x$
near $x_0$, so that $\pi(x)\in\Gamma'$ and
$x-\pi(x)=-d(x)\,\nu(\pi(x))$ with
$d(x):=\operatorname{dist}(x,\partial\Omega)$. Since $\nabla J$ is
uniformly continuous near $x_0$ and $J(\pi(x))=0$,
\[
J(x)=-d(x)\,(\nabla J)(\pi(x))\,\nu(\pi(x))+o\big(d(x)\big),
\]
hence
\[
J(x)\cdot\nu_0=-d(x)\,\nu_0^{\top}(\nabla J)(\pi(x))\,\nu(\pi(x))
+o\big(d(x)\big).
\]
As $x\to x_0$ we have $\pi(x)\to x_0$ and, $\Gamma$ being $C^1$,
$\nu(\pi(x))\to\nu_0$; by continuity of $\nabla J$ and
\eqref{eq:entangledlimit} at $x'=x_0$, the bracketed factor tends to
$\nu_0^{\top}(\nabla J)(x_0)\nu_0=0$. Therefore $J(x)\cdot\nu_0
=o(d(x))$, and the nondegeneracy $p\ge c\,d$ gives
\[
\big|v(x)\cdot\nu_0\big|=\frac{|J(x)\cdot\nu_0|}{p(x)}
\le\frac{o(d(x))}{c\,d(x)}\longrightarrow0 ,
\]
the approach being unrestricted.

\emph{Step 4: \textup{(B)} implies non-integrability.} Fix any collar
of $x_0$ and choose, inside it, a point $x_1\in\Gamma$ with
$J(x_1)\neq0$. By continuity there are $\eta>0$ and $r>0$ with
$|J|\ge\eta$ on $B_r(x_1)\cap\Omega$, and by the upper bound
recorded above $p\le\|\nabla p\|_\infty d$ there, whence
\[
|v|=\frac{|J|}{p}\ \ge\ \frac{\eta}{\|\nabla p\|_\infty\,d(\cdot)}
\qquad\text{on }B_r(x_1)\cap\Omega .
\]
Integrating in the normal direction, $\int_0^r s^{-1}\dd s=\infty$,
so $v\notin L^1(B_r(x_1)\cap\Omega)$ and a fortiori $v\notin L^1$ on
the collar. As the collar was arbitrary, this holds for every collar
of $x_0$.

\end{proof}

Thus, in the class covered by Lemma \ref{lem:taylor}, tangency and
the collar integrability of $v$ cannot fail independently: the
mechanism that destroys one destroys the other. This is consistent
with Remark \ref{rem:sharp}(a) and delimits what the example of
Section \ref{sec:mainB} can prove; see Remark
\ref{rem:whatitshows}.

\begin{remark}[Two independent mechanisms]\label{rem:mechanisms}
Theorem \ref{thm:datafail} exhibits the \emph{data mechanism}: a
vacuum in $p_0$ makes integral curves fail to exist, at $t=0$ only,
in any dimension including $d=1$. Theorem \ref{thm:B} exhibits the
\emph{boundary mechanism}: a boundary current makes the flow
property fail on every time interval --- at some time $t_*\in(0,T]$
for each $T>0$, hence arbitrarily close to time zero, though not
necessarily at each prescribed time, the orbits being periodic ---
with all integral curves present --- and Proposition \ref{prop:1d} shows this mechanism is
genuinely higher-dimensional, since the tangential direction along
the wall is what carries the current. The discrepancy between the
density evolution and its deterministic transport representation,
identified in \cite{pfode}, therefore has (at least) two independent
sources, of which exactly one survives in dimension one.
\end{remark}
\newpage
%=====================================================================
\section{Discussion and extensions}\label{sec:open}
%=====================================================================
Circling back to the reflected diffusion models discussed in Section \ref{sec:generative},
we can observe that our results 
provide a rigorous mathematical justification for using the reflection-free probability-flow ODE \eqref{eq.PFODE} for simulating the flow.
After positive-time regularization and under suitable drift–score regularity, this ODE generates a {\it confined} regular Lagrangian flow and transports the same marginals as the reflected diffusion, even though it contains no explicit reflection term. This supports deterministic sampling between positive times and clarifies that the relevant boundary condition is tangency of the full probability-flow velocity, which does not necessarily require a zero normal score at the boundary. 
Importantly, the result is not restricted to hypercubes or polytopes; Corollary \ref{cor:early} applies to general bounded $C^{2,\alpha}$
 domains such as the ones considered by Fishman et al. \cite{fishman2024}.
 
The results also identify important limitations of the deterministic sampling method. Equality of the marginal distributions alone does not guarantee a well-posed or stable deterministic sampler \eqref{eq.PFODE}: singular initial data can obstruct a flow at time zero, and boundary degeneracy can produce unbounded compression even when confined characteristics and exact marginal transport exist. Thus early stopping, control of the combined drift–score field, and correct boundary behavior are genuine mathematical requirements rather than technical afterthoughts.

These effects illustrate the gap between Eulerian and Lagrangian well-posedness, which can be relevant in applications. Proposition \ref{prop:energyuniq} shows that the no-flux Fokker–Planck density evolution is unique for bounded measurable forward drifts, whereas construction of a regular Lagrangian flow for the probability-flow ODE requires additional regularity, divergence, and boundary-trace assumptions on the combined drift–score field. Thus the density flow and  its time reversal are uniquely determined under weaker assumptions than those needed to justify deterministic probability-flow sampling
via \eqref{eq.PFODE}.

From a theoretical perspective, many ramifications of the problems studied here remain to be explored.
A natural extension would be  to explore the case of obliquely reflected diffusions.
The Eulerian object is then, in general, a measure pair --interior
measure and boundary measure \cite{HarrisonWilliams,KangRamananStat}-- and the
tangential component of the reflection field induces a surface flux
along $\partial\Omega$, so that tangency of the probability-flow
velocity fails at the level of the boundary condition itself.
Whether a deterministic probability flow exists
in this setting is not clear. In this regard, the compression analysis of Theorem
\ref{thm:B} provides a natural model for the interaction of a
tangential wall flux with Lagrangian transport.

\subsection*{Acknowledgements} The author thanks Luigi Ambrosio for his beautiful lectures at the Spring School on Mathematics of Random Systems (Pisa 2025)  and RenYuan Xu for enlightening discussions on generative models. 

%=====================================================================

%=====================================================================

\end{document}